\tikzstyle{nodo}=[circle,draw,fill,inner sep=0pt,minimum size=%
\tikzstyle{infinito}=[circle,inner sep=0pt,minimum size=0mm]
\newcommand\R{{\mathbb R}}
\newcommand\Z{{\mathbb Z}}
\newcommand\N{{\mathbb N}}
\newcommand\LL{\mathcal L}
\newcommand\Sf{\mathcal S}
\newcommand\BB{{\mathcal B}}
\newcommand\f{\frac}
\newcommand\SL{{\mathcal S}_\lambda}
\newcommand\G{\mathcal G}
\newcommand\HH{\mathcal H}
\newcommand\NN{\mathcal N}
\newcommand\RR{\mathcal R}
\newcommand\vv{\textsc{v}}
\newcommand\eps{\varepsilon}
\newcommand\NL{{\mathcal N}_\lambda}
\newcommand{\Gt}{{\widetilde\G_N}}
\theoremstyle{definition}
\theoremstyle{plain}
\newtheorem{theorem}{Theorem}[section]
\newtheorem{proposition}[theorem]{Proposition}
\newtheorem{lemma}[theorem]{Lemma}
\newtheorem{corollary}[theorem]{Corollary}
\newcounter{ass}
\theoremstyle{remark}
\newtheorem{remark}[theorem]{Remark}
\newtheorem*{remark*}{Remark}
\theoremstyle{definition}
\newtheorem{definition}[theorem]{Definition}
 \numberwithin{equation}{section}
\date{}
\title{On the notion of ground state for nonlinear Schr\"odinger equations on metric graphs }
\author{Colette De Coster$^1$, Simone Dovetta$^2$, Damien Galant$^{1,3}$, Enrico Serra$^2$ 
	\\ \ \\{\small$^1$ Univ. Polytechnique Hauts-de-France, INSA Hauts-de-France, CERAMATHS - Laboratoire de}
	\\ {\small Mat\'eriaux C\'eramique et de Math\'ematiques, F-59313 Valenciennes, France}
	 \\{\small$^2$Dipartimento di Scienze
		Matematiche ``G.L. Lagrange'', Politecnico di Torino } \\ {\small
		Corso Duca degli Abruzzi, 24, 10129 Torino, Italy} \\ 
		{\small$^3$ {\small  F.R.S.-FNRS and UMONS - Universit\'e de Mons, Mons, Belgium}
		}}
\begin{document}

\maketitle

\begin{abstract} 
We compare ground states for the nonlinear Schr\"odinger equation on metric graphs, defined as global minimizers of the action functional constrained on the Nehari manifold, and least action solutions, namely minimizers of the action among all solutions to the equation.
In principle, four alternative cases may take place: ground states do exist (thus coinciding with least action solutions); ground states do not exist while least action solutions do; both ground states and least action solutions do not exist and the levels of the two minimizing problems coincide; both ground states and least action solutions do not exist and the levels of the two minimizing problems are different. We show that in the context of metric graphs all four alternatives do occur. This is accomplished by a careful analysis of doubly constrained variational problems.
As a by-product, we obtain new multiplicity results for positive solutions on a wide class of noncompact metric graphs.
 \end{abstract}

\noindent{\small AMS Subject Classification: 35R02, 35Q55, 49J40, 58E30.
}
\smallskip

\noindent{\small Keywords: nonlinear Schr\"odinger, ground states, least action, constrained minimization}

\section{Introduction}

Nonlinear Schr\"odinger equations on metric graphs have attracted the interest of a large -- and increasing -- number of researchers in the last few years. As the literature on the subject witnessed a massive growth, we refrain from overviewing it here, redirecting e.g. to \cite{ABD,ACFN,AP,BMP,BDL20,BDL21,BD22,BD21,DT22,CJS,
BCJS,BCJS2, G,GKP,KMPX,NP,PS22,PSV}  for some of the most recent developments and to the reviews \cite{ABR,KNP} for more comprehensive discussions. Within the whole theory, prominent efforts have been devoted to the analysis of existence of positive standing wave solutions, with a particular focus on {\em ground states}. 

The notion of ground state however, albeit often suggested unequivocally by the specific problem under study, is by no means 
univocally defined. This aspect is of course not specific to Schr\"odinger equations on metric graphs, but is a general feature appearing in the study of various types of  scalar field equations on a variety of domains, from open subsets of $\R^N$ to Riemannian manifolds.

To describe it more concretely, we consider a metric graph $\G$ and the NLS equation
\begin{equation}
\label{NLS0}
u'' + |u|^{p-2}u = \lambda u \quad\text{on } \G
\end{equation}
where $\lambda >0$ and $p>2$. As usual, it is required that \eqref{NLS0} be satisfied pointwise on every edge of $\G$, while additional matching conditions have to be imposed at the vertices of the structure. In this paper, equation \eqref{NLS0} is coupled with the so--called natural, or Kirchhoff, boundary conditions, prescribing that on every vertex $\vv$ of $\G$ the sum of the outgoing derivatives of $u$ along every edge incident at $\vv$ is zero.
Thus, defining a coordinate $x_e \in (0, |e|)$ on every edge $e$ of $\G$ (where $|e|$ is the length of edge $e$), the problem we are addressing reads
\begin{equation}
\label{NLS}
\begin{cases} u'' + |u|^{p-2}u = \lambda u & \text{on every edge $e$ of $\G$} \\[1ex]
u \text{ is continuous} & \text{at every vertex $\vv$ of $\G$}\\[1ex]
\displaystyle\sum_{e\succ \vv}  \frac{du}{dx_e}(\vv)=0 & \text{at every vertex $\vv$ of $\G$,}
\end{cases}
\end{equation}
where $\lambda>0$, $p>2$, the symbol $e\succ \vv$ means that the sum is extended to all edges emanating from $\vv$
and where $\frac{du}{dx_e}(\vv)$ is the outward derivative of $u$ at $\vv$. 
In this framework, the solutions to \eqref{NLS} can be characterized variationally as the critical points of the standard action functional $J_{\lambda} : H^1(\G) \to \R$
\begin{equation}
\label{Jlambda}
J_{\lambda}(u) := \frac{1}{2} \|u'\|_{L^2(\G)}^2 + \frac{\lambda}{2} \|u\|_{L^2(\G)}^2 - \frac{1}{p} \|u\|_{L^p(\G)}^p,
\end{equation}
where 
\begin{equation*}
H^1(\G) = \Bigl\{ u : \G \rightarrow \R \mid u \text{ is continuous and } u, u' \in L^2(\G) \Bigr\}.
\end{equation*}
In the search for ground states, as $J_{\lambda}$ is not bounded from below, one may impose extra constraints to recover boundedness and make a minimization procedure meaningful.
For instance one could restrict $J_{\lambda}$ to the unit sphere of $L^p(\G)$ or to the Nehari manifold associated with $J_{\lambda}$
(this second approach has the advantage that the nonlinearity need not be homogeneous).
Both procedures make $J_{\lambda}$ bounded from below on these sets and one then defines ground states as the functions that achieve the infimum of $J_{\lambda}$ on the constraint.
Specifically, for our problem, the Nehari manifold is the set
\begin{align}
\label{Nehari}
\mathcal{N}_{\lambda}(\G) :=& \, \left\{ u \in H^1(\G)\setminus\{0\} \mid J_{\lambda}'(u)u= 0\right\}\nonumber\\
=& \, \left\{ u \in H^1(\G)\setminus\{0\} \mid  \|u'\|_{L^2(\G)}^2 + \lambda \|u\|_{L^2(\G)}^2 = \|u\|_{L^p(\G)}^p\right\}.
\end{align}
As is well known, the Nehari manifold contains all nonzero critical points of $J_{\lambda}$ and is a natural constraint, in the sense that constrained critical points are in fact true critical points of $J_{\lambda}$. This leads to a first definition of ground state. Defining
\[
c_\lambda(\G) = \inf_{u \in \NL(\G) } J_{\lambda}(u),
\]
it is customary to set the following
\begin{definition} 
\label{gsadef}
An {\em action ground state}  for \eqref{NLS} is a function $u \in \NL(\G)$ such that
\begin{equation}
\label{gsa}
J_{\lambda}(u) = c_\lambda(\G).
\end{equation}
\end{definition}

Clearly any action ground state is a constant sign solution to \eqref{NLS}, often referred to also as a ``solution of minimal action'',
though we adopt here a different terminology (see Definition \ref{solmadef}).
In applications, action ground states play a prominent role for various reasons, which fully justifies the preceding definition.
In practice, however, one is very often confronted with the following inconvenience:
there need not exist any function $u$ in $\NL(\G)$ satisfying \eqref{gsa}
(a frequent fact in many noncompact settings).
Thus, the existence of a ground state is not guaranteed in general.

To overcome this obstacle, sometimes one assumes a different definition of ground state. Let
\[
\SL(\G) =   \, \{ u \in H^1(\G) \setminus\{0\} \mid  \ u \text { solves } \eqref{NLS} \}
\]
be the set of nonzero solutions to \eqref{NLS} and set
\begin{equation}
\label{solgs}
\sigma_\lambda(\G) = \inf_{u \in \SL(\G) } J_{\lambda}(u).
\end{equation}
Although it is common use to call ground states too the functions described in the following definition, we prefer to label them with a different name, to avoid misunderstandings.

\begin{definition} 
\label{solmadef}
A {\em least action solution} for \eqref{NLS} is a function $u \in \SL(\G)$ such that
\begin{equation}
\label{solma}
J_{\lambda}(u) = \sigma_\lambda(\G).
\end{equation}
\end{definition}

The two points of view motivating the above definitions are clearly different. In the former case one fixes the attention on the level $c_\lambda(\G)$, and tries to prove that it is attained, obtaining in this way a solution that has minimal action among all {\em functions} in $\NL(\G)$. In the latter the aim is to ascertain if, {\em among solutions} of \eqref{NLS}, there is one of least action; in this case there might be plenty of functions with lesser action (and none of them will solve problem \eqref{NLS} if $c_\lambda(\G)$ is not attained).

Although $\SL(\G)$ is a much smaller set than $\NL(\G)$ it is not at all clear, in general, that there exist any functions in $\SL(\G)$ that achieve $\sigma_\lambda(\G)$. Thus, even with the second definition, the existence of ``ground states'' is not guaranteed {\em a priori}.

The aim of this paper is to analyze the relations among $c_\lambda(\G)$ and $\sigma_\lambda(\G)$ and in particular to investigate if all the theoretical cases can actually take place. To make this point clear we start by observing that, trivially,
\begin{equation}
\label{csigma}
c_\lambda(\G) \le \sigma_\lambda(\G),
\end{equation}
without any further assumption. Furthermore, if $c_\lambda(\G)$ is attained by some function $u \in \NL(\G)$,
then $u \in \SL(\G)$, the equality $c_\lambda(\G) = \sigma_\lambda(\G)$ holds, and $\sigma_\lambda(\G)$ is attained too.
In view of these preliminary considerations, the possibilities to consider are exactly the following four:
\begin{itemize}
\item[A1)] $c_\lambda(\G) = \sigma_\lambda(\G)$, and they are attained;
\item[A2)] $c_\lambda(\G) = \sigma_\lambda(\G)$, and they are not attained;
\item[B1)] $c_\lambda(\G) < \sigma_\lambda(\G)$, $\sigma_\lambda(\G)$ is attained and $c_\lambda(\G)$ is not;
\item[B2)] $c_\lambda(\G) < \sigma_\lambda(\G)$, and neither is attained.
\end{itemize}
To prove or disprove the actual occurrence of these situations then amounts, for each A1--B2, to produce an example of a graph $\G$ where the behavior is exactly the one prescribed by the alternative in question.

We wish to make very clear that the previous discussion is completely independent of the domain on which the NLS equation is set.
For instance, replacing $\G$ by an open subset $\Omega$ of $\R^N$ (and the second derivative by the Laplacian,
and so on, e.g. $H^1(\G)$ by $H^1(\Omega)$ or  $H^1_0(\Omega)$,....), every issue described so far can be stated in the new context without any modification except notation.
In particular, the four alternatives listed above remain and it would be extremely interesting to understand if they really occur.
While this can be easily achieved in some of the alternatives (A1, for instance), some of them appear rather difficult to deal with when considering the NLS equation on subsets of $\R^N$ and are currently out of reach.

Our present contribution aims at shedding some light on this problem starting from the context of metric graphs, where one can profitably use the advantage of the local one--dimensional nature of the ambient space to obtain sharper results. Even in this setting, though, the constructions we will provide are quite involved, very specific to the metric graph environment and do not seem to be extendable to other frameworks.

Stated in a compact form, our main result  is the following. In a nutshell, as far as action ground states are concerned, ``anything can happen'' on metric graphs.

\begin{theorem} 
\label{main}
For every $p>2$, every $\lambda >0$ and every choice of an alternative among A1, A2, B1, B2, there exists a metric graph $\G$ where that alternative takes place.
\end{theorem}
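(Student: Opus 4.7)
My plan is to establish the theorem by constructing, for each of the four alternatives A1--B2 and for any given $p>2$, $\lambda>0$, a dedicated metric graph realizing that alternative; the statement thus reduces to four independent geometric constructions. I would process them in increasing order of difficulty.

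Case A1 is handled by the simplest choice $\G=\R$ (or any compact graph): the classical Schr\"odinger soliton sits in $\NL(\R)$ and realizes $c_\lambda(\R)=\sigma_\lambda(\R)$, both attained. For A2, I would work on a noncompact graph whose Nehari infimum equals the soliton level on $\R$ without being attained, yet along which a sequence of true solutions has actions converging to that level; the guiding idea is to pick a graph with an unbounded end along which genuine bound states can be transported to infinity with vanishing interaction cost, forcing $c_\lambda(\G)=\sigma_\lambda(\G)$ with neither attained.

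The crux of the argument lies in B1 and B2, where one must exhibit a graph whose Nehari infimum is strictly below the action of every genuine solution. My plan is to work on a family of noncompact graphs $\Gt$ obtained by grafting $N$ half-lines onto a carefully designed compact core, and to analyze a doubly constrained variational problem of the form
\[
\inf\bigl\{\,J_\lambda(u) \mid u\in\NL(\Gt),\ u\ \text{satisfies an additional condition tied to the core}\,\bigr\}.
\]
The auxiliary constraint has to be chosen (i) to restore the compactness destroyed by the half-lines and (ii) to be a natural constraint, so that the associated Lagrange multiplier vanishes at minimizers; together, (i) and (ii) ensure that every doubly constrained minimizer is an actual critical point of $J_\lambda$ and hence belongs to $\SL(\Gt)$. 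A test-function argument exploiting the topology of the core then yields $c_\lambda(\Gt)$ strictly below the doubly constrained level. For B1, the parameters are tuned so that the doubly constrained problem is attained, producing a least action solution realizing $\sigma_\lambda(\Gt)>c_\lambda(\Gt)$. For B2, one takes $N$ larger or perturbs the core further so that even the doubly constrained minimizer escapes to infinity, while the strict separation is preserved by a concentration--compactness analysis of minimizing sequences of solutions.

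The main obstacle, common to B1 and B2, is twofold. First, the auxiliary constraint must balance compactness and naturality: natural constraints on $\NL(\G)$ tend either to be too rigid (yielding no new critical points) or too flexible (producing critical points of the constrained functional that are not critical points of $J_\lambda$). Second, proving the strict inequality $c_\lambda(\G)<\sigma_\lambda(\G)$ quantitatively requires comparing a free, noncompact minimization (where mass may escape to infinity along any half-line) with a minimization over true solutions (where matching conditions at vertices sharply restrict admissible profiles). This comparison will likely rely on surgery and symmetrization arguments specific to the graph geometry, and is where I expect the bulk of the technical effort to concentrate.
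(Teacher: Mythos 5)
Your overall strategy---four independent constructions, with a doubly constrained problem of the type $\inf\{J_\lambda(u):u\in\NL(\G)\cap X\}$ as the engine for producing genuine solutions---is the same as the paper's, and A1 is handled correctly. But there are genuine gaps in the other three cases. For A2, the mechanism you propose (``bound states transported to infinity with vanishing interaction cost'') would not produce a non-attained infimum: if the graph is asymptotically invariant under such transport, the translated solutions all have the \emph{same} action, so as soon as one solution reaches a level near $c_\lambda(\G)$ the infimum over $\SL(\G)$ is attained. What is needed is a sequence of geometrically distinct solutions whose actions \emph{strictly decrease} to $s_\lambda$ without ever reaching it; the paper obtains this by attaching self-loops of length $k\to\infty$ along a line and solving the doubly constrained problem on each loop, so that longer loops host better and better (but never perfect) approximations of the soliton. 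For B1 your construction is both over-engineered and incomplete: the paper simply takes a star graph with $N\ge3$ half-lines, where \emph{all} solutions are explicitly classified, giving $\sigma_\lambda=\tfrac N2 s_\lambda>s_\lambda$ attained. Your scheme produces one candidate solution via the double constraint but offers no mechanism to bound from below the action of \emph{every other} solution, which is exactly what $\sigma_\lambda(\G)>c_\lambda(\G)$ with $\sigma_\lambda$ attained requires.

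The most serious gap is in B2. ``Taking $N$ larger so that even the doubly constrained minimizer escapes to infinity'' does not, by itself, yield B2: if solutions escape to infinity along a structure where the soliton level $s_\lambda$ is asymptotically achievable, the escaping sequence drives $\sigma_\lambda(\G)$ down to $c_\lambda(\G)$ and you land back in case A2. The missing idea is the coexistence of \emph{two distinct problems at infinity}: one at level $s_\lambda$ attracting minimizing sequences in $\NL(\G)$ (but supporting no solutions at nearby levels), and a second one at a strictly higher level attracting sequences of solutions. The paper realizes this with a graph $\G_N$ that is a $\Z$-periodic graph $\widetilde\G_N$ (whose least action solution level $\sigma_\lambda(\widetilde\G_N)$ exceeds $s_\lambda$ and \emph{is} attained by periodicity) modified by a single local defect; rearrangement surgery shows that any solution of $\G_N$ can be strictly improved by comparison with $\widetilde\G_N$, so $\sigma_\lambda(\G_N)=\sigma_\lambda(\widetilde\G_N)>s_\lambda$ but is never attained on $\G_N$. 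Nothing in your sketch forces the escaping solutions to stay quantitatively above $c_\lambda(\G)$, nor prevents the higher level from being attained. A smaller point: the way the double constraint produces true solutions is not via a vanishing Lagrange multiplier, but by showing the minimizer satisfies $\|u\|_{L^\infty(e)}>\|u\|_{L^\infty(\G\setminus e)}$, i.e.\ it lies in the relative interior of $X_e$, hence is a local minimizer on $\NL(\G)$ and the constraint is simply inactive.
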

As the proof of Theorem \ref{main} will be carried out case by case, let us briefly comment here on A1--B2, postponing the technical issues to subsequent sections.

Case A1 corresponds to problems where the infimum $c_\lambda(\G)$ is achieved. This is what one normally tries to obtain in the existence results and it is the case for all compact graphs and for some noncompact ones  (see e.g. \cite{AST1,AST2,AST3,D,DGMP,DT19,KS,NPS,NP,T}, works that deal with still a slightly different notion of ground state, but whose techniques adapt easily  to the present setting). The theory in this framework is rather well developed, not only on metric graphs of course, and there is not much to add.

In case A2 the graph is necessarily noncompact.  There are plenty of examples where it is known that $c_\lambda(\G)$ is not attained, and this is due to topological or metric obstructions on the graph that have been widely described in the literature (see again e.g. \cite{AST1,AST2,AST3,DT19}). 
Nonetheless, this leaves open the question of the existence of a least action solution in the sense of Definition \ref{solmadef}.
As far as we know, examples of this kind,  where $c_\lambda(\G)$ coincides with $\sigma_\lambda(\G)$ but the level is not achieved, have never been described before. The construction of a graph with this property will be the object of Section \ref{subsec:A2} (Theorem \ref{Gc}) and is one of the principal proofs of the paper.

Due to existing results, it is easy to produce an example where alternative B1 occurs. We will briefly describe it anyway for completeness in Section \ref{subsec:B1}, since it has never been considered under this perspective.

Case B2 is the hardest one and will be treated in Section \ref{subsec:B2} (Theorem \ref{prop:largeN}). It is well known that, typically,  the lack of a function attaining $c_\lambda(\G)$ is due to the presence of a ``problem at infinity'' that attracts nonconvergent minimizing sequences. This is a standard phenomenon in problems with lack of compactness and is essentially what takes places in cases A2 and B1. The main novelty in B2, which is what makes this case rather delicate, is that the infimum over solutions is not attained due to the presence of a {\em second} problem at infinity, at level $\sigma_\lambda(\G) > c_\lambda(\G)$. By this we do not mean a problem at infinity with loss of compactness at different levels, but rather the presence of two {\em distinct} problems: the first, as we said, attracting nonconvergent minimizing sequences for $J_{\lambda}$ and preventing at the same time the existence of solutions with action arbitrarily close to such infimum level; the second attracting nonconvergent sequences of solutions of lower and lower level. In fact, the gap between $c_\lambda(\G)$ and $\sigma_\lambda(\G)$ reflects the coexistence of this pair of necessarily different problems at infinity. This seems to be a new phenomenon and it is what makes quite involved the construction of a graph exhibiting it.

As a byproduct, we notice that in cases A2 and B2, the fact that $\sigma_\lambda(\G)$ is not attained immediately implies the existence of infinitely many positive solutions for \eqref{NLS}, with accumulating levels, which we believe to be a remarkable fact. 
\medskip

From the technical point of view, the proofs of the aforementioned results exploit deeply the role of both the topology and the metric of graphs to determine existence/non--existence of solutions to specific variational problems. On the one hand, non--existence of ground states is obtained via by--now standard arguments in the theory of NLS on metric graphs. On the other hand, the construction of noncompact sequences of solutions in $\SL(\G)$ with specific action level is achieved, in both cases A2 and B2, through a careful analysis of doubly--constrained minimization problems in the form
\begin{equation}
\label{doubleconst}
\inf_{u \in \NL(\G)\cap X_e}J_{\lambda}(u)
\end{equation}
where
\[
X_e=\left\{u\in H^1(\G)\,:\,\|u\|_{L^\infty(\G)}=\|u\|_{L^\infty(e)}\right\}
\]
is the subset of $H^1$ functions attaining their $L^\infty$ norm on a given bounded edge $e$ of $\G$.
A rather general existence result of independent interest is derived for this kind of problems in Section \ref{sec:doppiovinc}.
Precisely, for a wide class of noncompact graphs, given $\lambda >0$ we show that the infimum in \eqref{doubleconst}
is attained whenever the length of $e$ is larger than a threshold depending only on $\lambda$ and $p$ (Theorem \ref{exlarge}).
Moreover, such a minimizer is a solution of problem \eqref{NLS} provided $e$ is sufficiently long (Theorem \ref{usol}),
the threshold depending this time also on $\displaystyle \inf_{e \in \mathbb E} |e|$, where $\mathbb E$ is the set of all edges of $\G$. This approach was originally introduced for mass--constrained critical points of the energy functional in \cite{ASTbound},
where it proved suitable to obtain multiplicity results. However, in that paper a crucial assumption is that the (prescribed) mass be sufficiently large. By scaling properties,
this is equivalent to the assumption that the length of all bounded edges is large. Here, on the contrary, it is sufficient that a {\em single} edge is long enough.
A direct consequence of these results is the existence of multiple positive solutions, each attaining its maximum on one of the edges longer than the threshold (Theorem \ref{multiple}).

To conclude this introduction, let us point out that, as is well--known, to look at solutions of \eqref{NLS} as critical points of the action functional \eqref{Jlambda}
is not the unique variational characterization at disposal. In particular, moving from the seminal papers \cite{CL,J},
in the past decade a lot of attention has been devoted, both on metric graphs and on domains in $\R^N$,
to normalized solutions, i.e. critical points of the energy functional $E:H^1(\G)\to\R$
\[
E(u):=\f12\|u'\|_{L^2(\G)}^2-\f1p\|u\|_{L^p(\G)}^p
\]
constrained to the space of functions with prescribed mass  $\mu$
\[
H_\mu^1(\G):=\left\{u\in H^1(\G)\,:\,\|u\|_{L^2(\G)}^2=\mu\right\}.
\]
In this setting, the parameter $\lambda$ appearing in \eqref{NLS} is not known a priori and pops up as a Lagrange multiplier associated to the mass constraint. It is evident that one can consider definitions of energy ground states and least energy solutions analogous to those given above when dealing with $J_{\lambda}$. Precisely, letting
\[
\widehat{c}_\mu(\G)=\inf_{u\in H_\mu^1(\G)}E(u)
\]
and 
\[
\widehat{\sigma}_\mu(\G)=\inf_{u\in\widehat{\mathcal{S}}_\mu}E(u)\,,
\]
where
\[
\widehat{\mathcal{S}}_\mu(\G):=\left\{u\in H_\mu^1(\G)\,:\, u\text{ solves }\eqref{NLS}\text{ for some }\lambda\in\R\right\}\,,
\]
one has the following mutually exclusive four alternatives (the analogue of A1--B2)
\begin{itemize}
	\item[$\widehat {\rm A1}$)] $\widehat{c}_\mu(\G) = \widehat{\sigma}_\mu(\G)$, and they are attained;
	\item[$\widehat {\rm A2}$)] $\widehat{c}_\mu(\G) = \widehat{\sigma}_\mu(\G)$, and they are not attained;
	\item[$\widehat {\rm B1}$)] $\widehat{c}_\mu(\G) < \widehat{\sigma}_\mu(\G)$, $\widehat{\sigma}_\mu(\G)$ is attained and $\widehat{c}_\mu(\G)$ is not;
	\item[$\widehat {\rm B2}$)] $\widehat{c}_\mu(\G) < \widehat{\sigma}_\mu(\G)$, and neither is attained.
\end{itemize}
As a matter of fact, the analysis we develop here for $J_{\lambda}$ can be naturally adapted to prove (with the very same constructions)
that also in the context of normalized critical points of $E$ all cases $\widehat {\rm A1}-\widehat {\rm B2}$ do actually occur. 
Note that, even though both the approaches have been widely exploited in the literature, a detailed discussion of the relation between ground states
(and more generally local minima) of $J_{\lambda}$ and $E$ was started only recently in \cite{DSTma, JL}.

\smallskip
The paper is organized as follows. Section \ref{sec:prel} recalls some preliminary results, whereas Section \ref{sec:doppiovinc} deals with doubly--constrained variational problems for the action functional in a general setting, proving the existence results in Theorems \ref{exlarge}--\ref{usol}-\ref{multiple}. Section \ref{sec:proof2} is devoted to the proof of Theorem \ref{main}.

\smallskip
\noindent\textbf{Notation.} In what follows, we will write e.g. $\|u\|_p,\dots$, in place of $\|u\|_{L^p(\G)},\dots$, whenever possible. When needed, the full notation will be used to indicate explicitly the domain of integration.

\section{Preliminaries}
\label{sec:prel}

In this paper we use a number of properties and results that have been established (mostly) in the recent literature. For the ease of the reader we collect them in the present section, referring each time to the original papers where proofs can be found.

We assume that the reader is familiar with the concept of metric graph. However, we make precise that in this paper we consider
metric graphs $\G = ({\mathbb V},{\mathbb E})$ satisfying the following 
 definition.

\begin{definition}
We denote by $\bf G$ the class of metric graphs $\G =  ({\mathbb V},{\mathbb E})$ such that
\begin{itemize}
\item $\G$ is connected and has an at most countable number of edges;
\item $\G$  has at least one unbounded edge (i.e. a  half-line);
\item $\deg(\vv) < \infty$ for every $\vv \in \mathbb{V}$, where $\deg(\vv)$ denotes the degree of the vertex $\vv$, i.e. the  number of edges emanating from it;
\item $\forall \vv\in \mathbb{V}$, $\deg(\vv) \not=2$;
\item $\inf_{e \in {\mathbb E}} |e| >0$, where $|e|$ denotes the length of $e$.
\end{itemize}
\end{definition}
There is no loss of generality in assuming that $\deg(\vv) \not=2$ since every vertex of degree two can a priori be eliminated from any metric graph, by melting the two edges incident at $\vv$ into a single edge. 
Note that every $\G \in \bf G$ is noncompact.
Further assumptions will be made when needed in the course of the paper.

In the study of the NLS equation on a metric graph in class $\bf G$, a fundamental tool is provided by the Gagliardo--Nirenberg inequalities (see  \cite{AST2})

\begin{equation}
\label{GN}
\|u\|_{q}^q\leq K\|u\|_{2}^{\frac q2+1}\|u'\|_{2}^{\frac q2-1}, \qquad K = 2^{\frac{q}{2} - 1}, 
\end{equation}
that hold for every $u\in H^1(\G)$ and every $q\geq 2$, and their $L^\infty$ version
\begin{equation}
\label{GNinfty}
\|u\|_{\infty}^2\leq 2\|u\|_{2} \|u'\|_{2}.
\end{equation}
A second fundamental tool that we will use very frequently in the next sections is provided by the rearrangement techniques of $H^1$ functions on  a generic graph $\G$, for the details of which we refer to Section 3 of \cite{AST1}. For the reader's convenience we recall here that,
given a nonnegative function $u\in H^1(\G)$, the  \emph{decreasing}  rearrangement of $u$ is the unique nonincreasing function $u^* \in H^1(0, |\G|)$ equimeasurable with $u$.
The equimeasurability property entails that 
\begin{equation}
\label{equidec}
\|u^*\|_{L^q(0, |\G|)} 
= \|u\|_{L^q(\G)}, \qquad \text{for every $q\in [1,+\infty]$}.
\end{equation}
Moreover, by the classical P\'olya--Szeg\H{o} inequality, we have
\begin{equation}
\label{PS*}
\|(u^*)'\|_{L^2(0, |\G|)} \le \|u'\|_{L^2(\G)}.
\end{equation}
Similarly,  the {\em symmetric} rearrangement $\widehat u \in H^1(-|\G|/2,|\G|/2)$ of $u$ is $\widehat u (x) = u^*(2|x|)$. By definition, $\widehat u$ is symmetric, nonincreasing on $[0, -|\G|/2)$ and equimeasurable with $u$. Furthermore, it is well known (see e.g. \cite{AST1}) that if $\# u^{-1}(t) \ge 2$ for almost every $t \in (0, \|u\|_\infty)$, then
\begin{equation}
\label{equisym}
\|\widehat u'\|_{L^2(-|\G|/2,|\G|/2)} \le \|u'\|_{L^2(\G)},
\end{equation}
where equality implies that $\#u^{-1}(t)=2$ for almost every $t \in (0, \|u\|_\infty)$.

The following result, that will be used in Section \ref{sec:proof2}, is essentially a refinement of the P\'olya--Szeg\H{o} inequality
and its proof follows combining results of \cite{Duff} and \cite{F}.

\begin{proposition}
\label{Ncontr}
Let $\G$ be any metric graph and let $u$ be a nonnegative function in $H^1(\G)$. Let $T\subset [0,+\infty[$ of positive measure. Assume that, for  some integer $K \ge 1$,
\[
\#u^{-1}(s) \ge K \qquad\text{for a.e. $s\in T$.}
\]
 Then the decreasing rearrangement  $u^*$ of $u$ satisfies
\[
\|u^*\|_{L^q((u^*)^{-1}(T))} =  \|u\|_{L^q(u^{-1}(T))}, \,\, \text{for every $q\in [1,+\infty]$}
\] 
and
\[
\|(u^*)'\|_{L^2((u^*)^{-1}(T))} \le \frac1K \|u'\|_{L^2(u^{-1}(T))}.
\]
\end{proposition}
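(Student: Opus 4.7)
The plan is to establish the two claims separately. The equimeasurability identity follows almost by definition of $u^*$, while the gradient inequality is a refinement of the P\'olya--Szeg\H{o} inequality that uses the multiplicity hypothesis $\#u^{-1}(s)\ge K$ to extract the factor $1/K$.

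For the $L^q$ identity I would exploit the fact that, for every Borel function $\phi:[0,\infty)\to[0,\infty)$, the equimeasurability of $u$ and $u^*$ yields
\[
\int_\G \phi(u(x))\dx = \int_0^{|\G|}\phi(u^*(y))\dy.
\]
Choosing $\phi(s)=s^q\chi_T(s)$ settles the case $q<\infty$; the case $q=\infty$ then follows by observing, with $\phi=\chi_T$, that both $u^{-1}(T)$ and $(u^*)^{-1}(T)$ have the same (positive) measure and that $\|v\|_{L^\infty(v^{-1}(T))}=\operatorname{ess\,sup} T$ for any Borel $v$.

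The gradient bound rests on the coarea formula, which, applied edge by edge on $\G$ and summed over the at most countably many edges, reads
\[
\int_{u^{-1}(T)}|u'(x)|^2\dx = \int_T\Bigl(\sum_{x\in u^{-1}(s)}|u'(x)|\Bigr)\,ds.
\]
By the classical results recalled in \cite{Duff,F}, at a.e. regular value $s$ the distribution function $\mu_u(t):=|\{u>t\}|$ satisfies
\[
-\mu_u'(s)=\sum_{x\in u^{-1}(s)}\f1{|u'(x)|},
\]
while for the nonincreasing function $u^*$ one simply has $-\mu_{u^*}'(s)=1/|(u^*)'((u^*)^{-1}(s))|$. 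For a.e. $s\in T$, Cauchy--Schwarz gives
\[
\Bigl(\sum_{x\in u^{-1}(s)}|u'(x)|\Bigr)\Bigl(\sum_{x\in u^{-1}(s)}\f1{|u'(x)|}\Bigr)\ge (\#u^{-1}(s))^2\ge K^2,
\]
so $\sum_{x\in u^{-1}(s)}|u'(x)|\ge K^2/(-\mu_u'(s))$. Plugging this into the coarea identity, using $\mu_u=\mu_{u^*}$, and then performing the change of variable $s=u^*(y)$ (which maps $T$ onto $(u^*)^{-1}(T)$ with $ds=-|(u^*)'(y)|\dy$) delivers
\[
\int_{u^{-1}(T)}|u'|^2\dx \ge K^2\int_{(u^*)^{-1}(T)}|(u^*)'(y)|^2\dy,
\]
which is the asserted bound after extracting a square root.

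The main technical delicacy is the validity of the formula $-\mu_u'(s)=\sum 1/|u'(x)|$ in the presence of critical points, plateaus of $u$ and countably many edges of $\G$. Sard's lemma applied on each edge makes the set of critical values null; at each regular value $u^{-1}(s)$ is finite on every edge by Rolle's theorem; and the global $H^1$ bound on $\G$ guarantees that all the sums and integrals involved converge. These ingredients are exactly what \cite{Duff,F} provide, and once granted the chain of inequalities above is routine.
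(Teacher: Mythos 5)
Your argument is correct and is exactly the coarea--plus--Cauchy--Schwarz proof that the paper itself does not write out but delegates to \cite{Duff} and \cite{F}, so it matches the intended route (your appeals to Sard and Rolle should more properly be the Luzin-type property $|u(E)|\le\int_E|u'|\,dx$ of absolutely continuous functions, which makes the critical values of $u$ on each edge a null set and guarantees that a.e.\ $s\in T$ has at least $K$ \emph{regular} preimages). One cosmetic slip: $\|u\|_{L^\infty(u^{-1}(T))}$ is not $\operatorname{ess\,sup} T$ in general, since values of $T$ attained by $u$ only on null sets do not contribute; the $q=\infty$ case follows instead from the identity $|u^{-1}(S)|=|(u^*)^{-1}(S)|$ for every Borel $S\subset[0,+\infty)$, applied to $S=T\cap(t,+\infty)$.
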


Next, if $\G$ is a metric graph in class $\bf G$  and $\lambda >0$, we define the action  functional $J_{\lambda}\in C^1(H^1(\G),\R)$ as 
\[
J_{\lambda}(u) := \frac{1}{2} \|u'\|_2^2 + \frac{\lambda}{2} \|u\|_2^2 - \frac{1}{p} \|u\|_p^p
\]
and the Nehari manifold associated to $J_{\lambda}$,
\begin{align*}
\NN_{\lambda}(\G) :=& \, \left\{ u \in H^1(\G)\setminus\{0\} \mid J_{\lambda}'(u)u= 0\right\} \\
=& \, \left\{ u \in H^1(\G)\setminus\{0\} \mid  \|u'\|_2^2 + \lambda \|u\|_2^2 = \|u\|_p^p\right\}.
\end{align*}
It is well known that there exists a natural projection of $H^1(\G) \setminus \{ 0 \}$ on $\NN_{\lambda}$.
Defining $\pi_\lambda : H^1(\G)\setminus\{0\} \to \R$ by
\begin{equation}
\label{pi}
\pi_\lambda (u) = \left(\frac{\|u'\|_2^2 + \lambda \|u\|_2^2}{\|u\|_p^p} \right)^{\frac{1}{p-2}},
\end{equation}
we have $u \in \NN_{\lambda}(\G)$ if and only if $\pi_\lambda(u) = 1$.
We will also need the functional $L:  H^1(\G)\setminus\{0\} \to \R$ defined by
\begin{equation}
\label{L}
L(u)=\frac{\|u\|_p^p-\|u'\|_2^2}{\|u\|_2^2}.
\end{equation}
If $u$ solves problem \eqref{NLS}, then $L(u) = \lambda$ and, more generally, $L(u) = \lambda$ if and only if $u \in  \NN_{\lambda}(\G)$.

When $u \in \NN_{\lambda}(\G)$, the functional $J_{\lambda}$ takes the simple form
\begin{equation}
\label{formJ}
J_{\lambda}(u) = \kappa \|u\|_p^p = \kappa (\|u'\|_2^2 + \lambda \|u\|_2^2), \qquad \kappa = \frac12 -\frac1p,
\end{equation}
so that $J_{\lambda}$ is positive on $ \NN_{\lambda}(\G)$. Actually more can be said, and we summarize it in the next proposition.

\begin{proposition}
\label{boundedness}
There exists a constant $C>0$ depending only on $\lambda$ and $p$ such that 
\begin{equation}
\label{pbelow}
	\inf_{u \in \NN_{\lambda}(\G)} \| u \|_p \geq C>0.
\end{equation}
Moreover, if $(u_n)_n \subset \NN_\lambda(\G)$
satisfies $\displaystyle \sup_n J_{\lambda}(u_n) < \infty$,
then $(u_n)_n$ is bounded in $H^1(\G)$ and
\begin{equation*}
\inf_n \|u_n\|_2 > 0, \quad \inf_n \|u_n\|_\infty > 0.
\end{equation*}
\end{proposition}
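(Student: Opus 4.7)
The plan is to derive everything from the Nehari identity $\|u'\|_2^2 + \lambda \|u\|_2^2 = \|u\|_p^p$ combined with the Gagliardo--Nirenberg inequality \eqref{GN}, using only the norm comparisons available on $H^1(\G)$.

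\textbf{Step 1: the lower bound $\inf_{\NN_\lambda(\G)}\|u\|_p\ge C$.} For $u\in\NN_\lambda(\G)$, I would first estimate the Nehari identity from below by $\min(1,\lambda)\|u\|_{H^1(\G)}^2$, and from above by applying \eqref{GN} with $q=p$. Since $p/2+1$ and $p/2-1$ are nonnegative and sum to $p$, the product $\|u\|_2^{p/2+1}\|u'\|_2^{p/2-1}$ is controlled by $\|u\|_{H^1(\G)}^p$. The two bounds combine into
\[
\min(1,\lambda)\,\|u\|_{H^1(\G)}^2 \le K\,\|u\|_{H^1(\G)}^p,
\]
which, since $u\neq 0$, yields a positive lower bound on $\|u\|_{H^1(\G)}$ depending only on $\lambda$ and $p$. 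Substituting this bound back into the Nehari identity produces the claimed lower bound on $\|u\|_p$.

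\textbf{Step 2: $H^1$ boundedness of sequences with bounded action.} Here I would invoke the representation \eqref{formJ}: for $u_n\in\NN_\lambda(\G)$ one has $J_{\lambda}(u_n)=\kappa\|u_n\|_p^p=\kappa(\|u_n'\|_2^2+\lambda\|u_n\|_2^2)$ with $\kappa>0$. Hence $\sup_n J_{\lambda}(u_n)<\infty$ gives simultaneously an upper bound on $\|u_n\|_p$ and, through the Nehari identity, on both $\|u_n'\|_2$ and $\sqrt{\lambda}\,\|u_n\|_2$, so $(u_n)_n$ is bounded in $H^1(\G)$.

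\textbf{Step 3: the lower bounds on $\|u_n\|_2$ and $\|u_n\|_\infty$.} For the $L^2$ bound, I would revisit \eqref{GN} with $q=p$: the already established lower bound $\|u_n\|_p\ge C$ gives
\[
C^p \le K\,\|u_n\|_2^{p/2+1}\,\|u_n'\|_2^{p/2-1},
\]
and since $\|u_n'\|_2$ is bounded from above by Step 2, this forces $\inf_n\|u_n\|_2>0$. For the $L^\infty$ bound, the elementary interpolation $\|u_n\|_p^p\le\|u_n\|_\infty^{p-2}\|u_n\|_2^2$ rearranges to
\[
\|u_n\|_\infty^{p-2} \ge \frac{\|u_n\|_p^p}{\|u_n\|_2^2} \ge \frac{C^p}{\|u_n\|_2^2},
\]
and since $\|u_n\|_2$ is bounded above (again by Step 2), the right-hand side stays bounded away from zero.

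I do not expect serious obstacles: everything reduces to a careful chaining of \eqref{GN}, the interpolation $\|\cdot\|_p^p\le\|\cdot\|_\infty^{p-2}\|\cdot\|_2^2$, and the two--sided control afforded by the Nehari identity together with \eqref{formJ}. The only mild care point is to track all constants so as to keep them dependent only on $\lambda$ and $p$ (and not on $\G$), which is automatic since \eqref{GN} holds with a universal constant on any graph.
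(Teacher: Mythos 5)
Your proposal is correct and follows essentially the same route as the paper: the Nehari identity combined with a Sobolev/Gagliardo--Nirenberg bound gives the lower bound on $\|u\|_p$, \eqref{formJ} gives the $H^1$ bound, and interpolation inequalities then force $\|u_n\|_2$ and $\|u_n\|_\infty$ away from zero. The only (immaterial) difference is that the paper uses the generic embedding $\|u\|_p\le C(\|u'\|_2^2+\lambda\|u\|_2^2)^{1/2}$ in Step 1 and deduces the $L^2$ lower bound from the $L^\infty$ interpolation rather than from \eqref{GN}.
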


\begin{proof}
Since $\sqrt{\|u'\|_2^2 + \lambda \|u\|_2^2}$  is equivalent to the usual $H^1(\G)$ norm,
Sobolev inequalities imply the existence of $C=C(p,\lambda) > 0$ such that,
for all $u \in \NN_{\lambda}$,
\[
\|u\|_p\le C(\|u'\|_2^2 + \lambda \|u\|_2^2)^{1/2}  = C\|u\|_p^{p/2},
\]
whence
\[
\inf_{u \in \NN_{\lambda}} \|u\|_p \ge C^{\frac2{2-p}} > 0
\]
which proves \eqref{pbelow}.

From \eqref{formJ}, as $\displaystyle \sup_n J_{\lambda}(u_n) < \infty$,  we see that $(u_n)_n$ is bounded in $H^1(\G)$,
hence in $L^2(\G)$ and $L^{\infty}(\G)$. Observing that
\[
\|u_n\|_p^p \le \|u_n\|_\infty^{p-2}\|u_n\|_2^2,
\]
we see that $ \|u_n\|_2$ and $\|u_n\|_\infty$ are also uniformly bounded away from zero.
\end{proof}

When $\G=\R$, the non-trivial solutions to \eqref{NLS} are called {\em solitons}, they are unique up to translations and sign
and they are the action ground states of $J_{\lambda}$ over 
$\NN_\lambda(\R)$ (see e.g. \cite[Proposition 3.12]{LC}).
Denoting by $\phi_\lambda$ 
the unique positive and even soliton, letting $s_1 := J_{1}(\phi_1)$ and
\begin{equation}
\label{solitonlevel}
s_\lambda := s_1 \lambda^\alpha, \qquad \alpha = \frac{p+2}{2(p-2)}, 
\end{equation}
for $\lambda > 0$, there results
\begin{equation}
\label{infR}
J_{\lambda}(\phi_\lambda)
= \inf_{u \in \NN_{\lambda}(\R)} J_{\lambda}(u)
= s_\lambda.
\end{equation}
When $\G=\R^+$, for every $\lambda >0$ there is a unique positive action ground state, given by the restriction of $\phi_\lambda$ to $\R^+$ and
\begin{equation}
\label{infR+}
\inf_{u \in \NN_\lambda(\R^+)}J_{\lambda}(u)
= J_{\lambda}\Bigl(\left.\phi_\lambda \right._{|_{\R^+}} \Bigr)
= \frac12 s_\lambda.
\end{equation}
The level $s_\lambda$ plays a fundamental role in many papers, including the present one.

\begin{lemma}
\label{lem:infimum_long_edges}
Let $\G$ be a metric graph. Assume that, for every $\ell > 0$,  $\G$
has an edge $e_\ell$ of length at least $\ell$.
Then there exists a sequence of functions
$(u_\ell)_{\ell \in \N} \subseteq \mathcal{N}_{\lambda}$,
equal to zero outside $e_\ell$, and such that
\begin{equation*}
\lim_{\ell \rightarrow \infty} J_{\lambda}(u_\ell) = s_{\lambda}.
\end{equation*}
In particular
\begin{equation}
\label{dis1}
\inf_{u\in  \NN_\lambda(\G)} J_{\lambda}(u) \le s_\lambda.
\end{equation}
\end{lemma}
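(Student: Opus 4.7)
The plan is to construct $u_\ell$ by transplanting (a cutoff of) the soliton $\phi_\lambda$ onto the long edge $e_\ell$, and then projecting onto the Nehari manifold via $\pi_\lambda$.

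More concretely, I would first parametrize the edge $e_\ell$ as $[0, |e_\ell|]$ with $|e_\ell| \ge \ell$, and center a translate of $\phi_\lambda$ on an interval $I_\ell \subseteq e_\ell$ of length $\ell$, setting $w_\ell(x) = \phi_\lambda(x - \ell/2)$ for $x \in [0,\ell]$. Since $\phi_\lambda$ decays exponentially, by picking a smooth cutoff $\chi_\ell$ with $\chi_\ell \equiv 1$ on $[1, \ell-1]$, $\chi_\ell \equiv 0$ outside $[0, \ell]$ and $\|\chi_\ell'\|_\infty$ uniformly bounded, the function $v_\ell := \chi_\ell w_\ell$ vanishes at the endpoints of $I_\ell$. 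Extending $v_\ell$ by zero to all of $\G$ then yields a continuous $H^1(\G)$ function supported in $e_\ell$, so in particular $v_\ell$ automatically satisfies any vertex compatibility needed simply to belong to $H^1(\G)$.

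The next step is to verify, using the exponential decay of $\phi_\lambda$ and its derivative, that $v_\ell \to \phi_\lambda$ in $H^1(\R)$ and $L^p(\R)$ as $\ell \to \infty$, hence
\[
\|v_\ell'\|_2^2 + \lambda \|v_\ell\|_2^2 \longrightarrow \|\phi_\lambda'\|_2^2 + \lambda \|\phi_\lambda\|_2^2 = \|\phi_\lambda\|_p^p, \qquad \|v_\ell\|_p^p \longrightarrow \|\phi_\lambda\|_p^p.
\]
I now project: define $u_\ell := \pi_\lambda(v_\ell)\, v_\ell$, so $u_\ell \in \NN_\lambda(\G)$ by construction, and $u_\ell$ is still supported on $e_\ell$. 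Since $\phi_\lambda \in \NN_\lambda(\R)$ and both ratios in the definition of $\pi_\lambda$ converge to the same positive limit, we get $\pi_\lambda(v_\ell) \to 1$.

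Using the representation \eqref{formJ}, namely $J_\lambda(u_\ell) = \kappa \pi_\lambda(v_\ell)^2 \bigl(\|v_\ell'\|_2^2 + \lambda \|v_\ell\|_2^2\bigr)$, and combining the two convergences above, I conclude
\[
J_\lambda(u_\ell) \longrightarrow \kappa \bigl(\|\phi_\lambda'\|_2^2 + \lambda \|\phi_\lambda\|_2^2\bigr) = J_\lambda(\phi_\lambda) = s_\lambda,
\]
where the last equality is \eqref{infR}. The bound \eqref{dis1} is then immediate. The only real technical point is the exponential-decay estimate controlling the error introduced by the cutoff and the translation of the support from $\R$ onto the edge; but this is routine since $\phi_\lambda$ and $\phi_\lambda'$ decay like $e^{-\sqrt{\lambda}|x|}$, so I do not expect any serious obstacle.
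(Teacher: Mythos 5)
Your proposal is correct and follows essentially the same route as the paper: transplant the soliton onto the long edge, truncate it so the zero extension lies in $H^1(\G)$, and project onto $\NN_\lambda(\G)$ via $\pi_\lambda$, noting $\pi_\lambda(v_\ell)\to 1$. The only (immaterial) difference is that the paper truncates by taking $(\phi_\lambda-\delta_\ell)^+$ with $\delta_\ell=\phi_\lambda(\ell/2)$ instead of multiplying by a smooth cutoff.
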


\begin{proof}
Let $\phi_\lambda$ be the soliton in $\NN_\lambda(\R)$ and set $\delta_\ell = \phi_\lambda(\ell/2) = o(1)$ as $\ell\to \infty$. 
For every $\ell>0$, identify  the interval $[-\ell/2,\ell/2]$ with a subset of the edge $e_\ell$ and define $v_\ell \in H^1(\G)$ as
\[
v_\ell(x) = \begin{cases}  \left(\phi_\lambda(x) - \delta_\ell\right)^+ & \text{ if } x\in e_\ell, \\
0 & \text{ elsewhere on } \G. \end{cases}
\]
Since $\|v_\ell\|_{H^1(\G)} = \|v_\ell\|_{H^1(e_\ell)} = \|\phi_\lambda\|_{H^1(\R)} +o(1)$ as $\ell \to \infty$, and likewise for all the $L^q$ norms,
we see that $\pi_\lambda(v_\ell) \to 1$ as $\ell\to \infty$. Therefore, as $\pi_\lambda(v_\ell)v_\ell \in \mathcal{N}_\lambda(\G)$, 
\[
J_{\lambda}(\pi_\lambda(v_\ell)v_\ell)= J_{\lambda}(v_\ell) +o(1) = J_{\lambda}(\phi_\lambda) + o(1) = s_\lambda + o(1)
\]
as $\ell\to \infty$, and we conclude.
\end{proof}

In \cite{AST1} the authors introduced a topological condition on $\G$ under which  inequality \eqref{dis1} is reversed.
In our setting this condition, that we call assumption (H) as in \cite{AST1}, takes the following form.

\begin{definition}
\label{condH} We say that a metric graph  $\G\in {\bf G}$ satisfies assumption (H) if,
for every point $x_0\in\G$, there exist two injective curves
$\gamma_1,\gamma_2:[0,+\infty)\to\G$ parameterized by arclength, with disjoint
images except for an at most countable number of  points, and such that $\gamma_1(0)=\gamma_2(0)=x_0$.
\end{definition}

If a graph $\G\in {\bf G}$ satisfies assumption (H), it is easy to see (and proved in \cite{AST1}) that for every nonnegative $u \in H^1(\G)$
\[
\#u^{-1}(t) \ge 2\qquad \text{for almost every } t \in (0,\|u\|_\infty).
\]
Therefore, letting $\widehat u$ be the symmetric rearrangement of $u$, if $u \in \NN_\lambda(\G)$ we have $\pi_\lambda(\widehat u) \le 1$ by \eqref{equisym}. 
As $\pi_\lambda(\widehat u)\widehat u \in \NN_\lambda(\R)$, we conclude by \eqref{infR} and \eqref{formJ} that
\[
s_\lambda \le J_{\lambda}(\pi_\lambda(\widehat u)\widehat u) = \kappa\pi_\lambda(\widehat u)^p\|\widehat u\|_{L^p(\R)} ^p\le \kappa \| u\|_{L^p(\G)}^p = J_{\lambda}(u)
\]
and since this holds for every $u \in \NN_\lambda(\G)$, inequality \eqref{dis1} is in fact an equality.

As a consequence, with the same techniques as in \cite{AST1}, it is easy to prove the following result.

\begin{theorem}
\label{notatt}
If $\G\in {\bf G}$ satisfies assumption (H),  then
\[
\inf_{u\in  \NN_\lambda(\G)} J_{\lambda}(u) = s_\lambda
\]
but it is never achieved, unless $\G$ is isometric to $\R$ or to one of the exceptional graphs depicted in Example 2.4 of \cite{AST1}.
\end{theorem}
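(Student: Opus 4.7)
The plan is to recycle the symmetrization argument developed in the paragraph preceding the statement and then push it further by exploiting the equality case of \eqref{equisym} to pin down the geometry of $\G$.

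First, to identify the infimum: any $\G \in \mathbf{G}$ contains at least one unbounded edge, so Lemma \ref{lem:infimum_long_edges} (taking $e_\ell$ to be that half-line for every $\ell$) already yields $\inf_{u \in \NN_\lambda(\G)} J_{\lambda}(u) \leq s_\lambda$. The reverse inequality is exactly what was proved in the paragraph preceding the statement: under assumption (H) one has $\# u^{-1}(t) \geq 2$ for a.e.\ $t \in (0,\|u\|_\infty)$, and combining \eqref{equisym} with \eqref{infR} and \eqref{formJ} gives $s_\lambda \leq J_{\lambda}(u)$ for every $u \in \NN_\lambda(\G)$.

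For the non-attainment part I would argue by contradiction. Assume some $u \in \NN_\lambda(\G)$ satisfies $J_{\lambda}(u) = s_\lambda$. Replacing $u$ by $|u|$, which preserves all $L^q$ and $H^1$ norms, I may assume $u \geq 0$. Let $\widehat u \in H^1(\R)$ be its symmetric rearrangement. The chain
\begin{equation*}
s_\lambda \;\leq\; J_{\lambda}\bigl(\pi_\lambda(\widehat u)\,\widehat u\bigr) \;=\; \kappa\,\pi_\lambda(\widehat u)^{p}\,\|\widehat u\|_{p}^{p} \;\leq\; \kappa\,\|u\|_{p}^{p} \;=\; J_{\lambda}(u) \;=\; s_\lambda
\end{equation*}
must then consist of equalities. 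In particular $\pi_\lambda(\widehat u) = 1$, whence $\widehat u \in \NN_\lambda(\R)$ and $J_{\lambda}(\widehat u) = s_\lambda$; by \eqref{infR} and the uniqueness (up to sign and translation) of the positive even soliton, $\widehat u \equiv \phi_\lambda$. Moreover, the same chain forces $\|\widehat u'\|_{2} = \|u'\|_{2}$, which by the equality statement of \eqref{equisym} yields
\begin{equation*}
\# u^{-1}(t) = 2 \quad \text{for a.e.\ } t \in (0, \|u\|_\infty).
\end{equation*}

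Finally I would invoke the rigidity analysis already developed in \cite{AST1}. Since $\phi_\lambda$ attains its maximum at a single point and is strictly decreasing on each side, the two-to-one condition above implies that $u$ has a unique maximum point $x_{0}\in\G$, and that $\G\setminus\{x_{0}\}$ decomposes into two monotone preimages of $\phi_\lambda$; connectedness of $\G$, together with the restriction $\deg(\vv)\neq 2$ and the Kirchhoff condition at each internal vertex, forces $\G$ to be isometric to $\R$ or to one of the exceptional graphs of \cite[Example 2.4]{AST1}. The main obstacle of the proof is precisely this last step: the equality case of \eqref{equisym} is a purely integral statement, and turning it into a geometric classification of $\G$ up to isometry is the delicate part, where the exceptional graphs arise from the few symmetric configurations compatible with the two-to-one level-set condition. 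Since the argument is entirely analogous to the one carried out in the energy-constrained setting of \cite{AST1}, I would just refer to that paper for the technical details.
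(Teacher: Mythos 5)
Your proposal takes essentially the same route as the paper: the identity $\inf_{u\in\NN_\lambda(\G)}J_\lambda(u)=s_\lambda$ is obtained there by exactly the symmetrization chain you reproduce (Lemma \ref{lem:infimum_long_edges} for one inequality, assumption (H) plus \eqref{equisym}, \eqref{infR} and \eqref{formJ} for the other), and for non-attainment the paper simply invokes ``the same techniques as in \cite{AST1}'', i.e.\ the equality-case analysis of the rearrangement inequality and the ensuing geometric rigidity that you sketch and likewise defer to that reference. Your write-up is, if anything, slightly more detailed than the paper's own.
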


If more is known on the number of preimages of a function $u\in \NN_\lambda(\G)$, one can obtain sharper estimates. The following result will be used in Section \ref{sec:proof2}.

\begin{proposition}
\label{sK2} 
Let $\G\in {\bf G}$ and assume that $u\in \NN_\lambda(\G)$ is nonnegative and  satisfies
\[
\# u^{-1}(t) \ge K \qquad\text{for a.e.  $t\in (\inf_{\G} u, \sup_{\G} u)$}
\]
for some integer $K \ge 1$. Then
\[
J_{\lambda}(u) \ge K\frac{s_\lambda}2.
\]
\end{proposition}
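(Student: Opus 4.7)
The plan is to combine the refined Pólya–Szegő inequality of Proposition \ref{Ncontr} with a one-dimensional dilation by factor $K$, and then to extract the factor $K$ in the desired inequality by exploiting the closed form $J_\lambda = \kappa\|\cdot\|_p^p$ on the Nehari manifold together with the half-line ground state level $s_\lambda/2$ given by \eqref{infR+}.

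First I would rearrange and dilate. Since $\G\in\mathbf{G}$ is noncompact we have $|\G|=+\infty$, so the decreasing rearrangement $u^*$ is defined on $\R^+$. Applying Proposition \ref{Ncontr} with $T = (0,\|u\|_\infty)$, noting that $\inf_\G u = 0$ for any nontrivial nonnegative $u \in H^1$ of a noncompact graph, and using that $u'$ and $(u^*)'$ vanish a.e.\ on the level sets $\{u\in\{0,\|u\|_\infty\}\}$ (and analogously for $u^*$), one extends the conclusion to the full graph:
\begin{equation*}
\|u^*\|_{L^q(\R^+)} = \|u\|_{L^q(\G)}\ \ (q\in[1,\infty]),\qquad \|(u^*)'\|_{L^2(\R^+)} \le \tfrac{1}{K}\|u'\|_{L^2(\G)}.
\end{equation*}
Setting $v(x):=u^*(Kx)$ on $\R^+$, a change of variables yields
\begin{equation*}
K\|v\|_p^p = \|u\|_p^p,\qquad K\|v\|_2^2 = \|u\|_2^2,\qquad K\|v'\|_2^2 = K^2\|(u^*)'\|_2^2 \le \|u'\|_2^2.
\end{equation*}

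Next I would project $v$ onto $\NN_\lambda(\R^+)$. Combining the Nehari identity $\|u'\|_2^2+\lambda\|u\|_2^2 = \|u\|_p^p$ with the relations above and dividing by $K$ gives $\|v'\|_2^2+\lambda\|v\|_2^2 \le \|v\|_p^p$, i.e.\ $\pi_\lambda(v) \le 1$. Hence $w:=\pi_\lambda(v)v\in\NN_\lambda(\R^+)$ and \eqref{infR+} gives $J_\lambda(w)\ge s_\lambda/2$. Invoking \eqref{formJ} for $u\in\NN_\lambda(\G)$ and for $w\in\NN_\lambda(\R^+)$, I conclude
\begin{equation*}
J_\lambda(u) \,=\, \kappa\|u\|_p^p \,=\, K\kappa\|v\|_p^p \,\ge\, K\kappa\,\pi_\lambda(v)^p\|v\|_p^p \,=\, K J_\lambda(w) \,\ge\, K\,\tfrac{s_\lambda}{2},
\end{equation*}
the middle inequality being $\pi_\lambda(v)^p\le 1$.

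The one mildly delicate step is the passage from the identities on $u^{-1}(T)$ furnished by Proposition \ref{Ncontr} to identities on the whole graph, which rests on the standard observation that $u'$ vanishes a.e.\ on each level set of $u$ (so the level sets at the top plateau and at height zero can be added back without changing any of the relevant norms, once the equimeasurable top plateau of $u^*$ is matched). The essential conceptual point—and what makes the argument succeed—is to lower-bound $J_\lambda(u)$ through the Nehari formula $\kappa\|u\|_p^p$ and use $\pi_\lambda(v)^p\le 1$, rather than to try to bound $J_\lambda(v)$ directly: the latter would fail because $v$ lies \emph{below} the Nehari manifold of $\R^+$.
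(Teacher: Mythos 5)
Your proof is correct and follows essentially the same route as the paper: apply Proposition \ref{Ncontr} to $T=(0,\sup_\G u)$, dilate the decreasing rearrangement by the factor $K$, check via the change of variables that $\pi_\lambda(u^*(K\cdot))\le 1$, project onto $\NN_\lambda(\R^+)$, and conclude from \eqref{infR+} and \eqref{formJ}. The closing observation that one must bound $\kappa\|u\|_p^p$ through $\pi_\lambda(v)^p\le 1$ rather than estimate $J_\lambda(v)$ directly is exactly the mechanism of the paper's final display.
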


\begin{proof} Since $\G\in {\bf G}$ we have $|\G| = +\infty$. By Proposition \ref{Ncontr} applied to $T=(\inf_{\G} u, \sup_{\G} u)=(0, \sup_{\G} u)$, the decreasing rearrangement $u^*$ of $u$ satisfies  $u^*\in H^1(\R^+)$ and
\[
\|u^*\|_{L^q(\R^+)} =  \|u\|_{L^q(\G)}, \qquad \text{for every $q\in [1,+\infty]$}, \qquad \|(u^*)'\|_{L^2(\R^+)} \le \frac1K \|u'\|_{L^2(\G)}.
\]
Setting $u_K(x) := u^*(Kx)$, a standard change of variable shows that
\begin{align*}
\pi_\lambda(u_K)^{p-2} &= \frac{\|u_K'\|_{L^2(\R^+)}^2 + \lambda \|u_K\|_{L^2(\R^+)}^2}{\|u_K\|_{L^p(\R^+)}^p} =
\frac{K \|(u^*)'\|_{L^2(\R^+)}^2 + \frac{\lambda}K \|u^*\|_{L^2(\R^+)}^2}{\frac1K \|u^*\|_{L^p(\R^+)}^p} \\ 
&\le \frac{\|u'\|_{L^2(\G)}^2 +  \lambda\|u\|_{L^2(\G)}^2}{\|u\|_{L^p(\G)}^p} =1
\end{align*}
and therefore, as $\pi_\lambda(u_K)u_K \in \NN_\lambda(\R^+)$,  by \eqref{infR+}
\[
\frac{s_\lambda}2 \le J_{\lambda}(\pi_\lambda(u_K)u_K ) = \kappa \pi_\lambda(u_K)^p \|u_K\|_{L^p(\R^+)} ^p\le  \frac{\kappa}{K} \|u\|_{L^p(\G)}^p = \frac1K J_{\lambda}(u).
\]
\end{proof}

\section{A general existence result}
\label{sec:doppiovinc}

In this section we prove a general existence result for positive solutions to \eqref{NLS} attaining their maximum in the interior of a prescribed sufficiently long edge.
Throughout this section, we work with metric graphs in class {\bf G}, most of the time requiring also that the graph satisfies assumption (H).
However, the arguments described in this section can be adapted with minor modifications to cover even broader classes of graphs.

Let $\G\in \bf{G}$ satisfy assumption (H) and let $e$ be one of its bounded edges. Set
\[
X_e:=\left\{u\in H^1(\G)\,\mid\, \|u\|_{L^\infty(\G)}=\|u\|_{L^\infty(e)}\right\}
\]
and consider the doubly--constrained minimization problem
\begin{equation}
\label{minXe}
c_\lambda(\G,e) := \inf_{u\in\mathcal{N}_\lambda(\G)\cap X_e}J_{\lambda}(u).
\end{equation}
\begin{remark}
\label{remVcl} The set $X_e$ is closed in the weak topology of $H^1(\G)$. Indeed,
if $u_n\in X_e$ and $u_n \rightharpoonup u$ in $H^1(\G)$, then  by semicontinuity
\[
\|u\|_{L^\infty(\G)} \leq \liminf_{n} \|u_n\|_{L^\infty(\G)} =\liminf_{n} \|u_n\|_{L^\infty(e)}= \|u\|_{L^\infty(e)}\,,
\]
the last equality being justified by the uniform convergence of $u_n$ to $u$ on $e$.
\end{remark}

The next two theorems state the main results of this section.
\begin{theorem}
\label{exlarge}
There exists $\overline{R}>0$ depending only on $\lambda$ and $p$
such that, if $\G\in {\bf G}$ satisfies assumption (H) and  has
a bounded edge $e$ of length $R \ge \overline{R}$, then $c_\lambda(\G,e)$ is attained. 
\end{theorem}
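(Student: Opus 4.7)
The plan is to take a minimizing sequence $(u_n) \subset \mathcal{N}_\lambda(\G) \cap X_e$ (we may assume $u_n \geq 0$, since replacing $u_n$ by $|u_n|$ preserves all relevant norms and both constraints) and to show that its weak $H^1(\G)$ limit $u$ is itself a minimizer. Boundedness of $(u_n)$ in $H^1(\G)$ is furnished by Proposition \ref{boundedness}, and Remark \ref{remVcl} automatically gives $u \in X_e$. The uniform lower bound $\|u_n\|_\infty = \|u_n\|_{L^\infty(e)} \geq C(\lambda,p) > 0$ from Proposition \ref{boundedness}, being attained at points of the compact set $\overline{e}$, passes to the limit through the compact embedding $H^1(e) \hookrightarrow C(\overline{e})$: hence $\|u\|_\infty \geq C$ and in particular $u \not\equiv 0$. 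The one-dimensional Hölder continuity of $u$ on $e$, combined with the uniform bound on $\|u'\|_2$ inherited via the Nehari identity from $J_\lambda(u_n) \leq c_\lambda(\G,e) + 1$, then yields a uniform lower bound
\[
\|u\|_p^p \geq c_0 > 0,
\]
with $c_0$ depending only on $\lambda$ and $p$.

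Next I project $u$ onto $\mathcal{N}_\lambda(\G)$: the function $\tilde u := \pi_\lambda(u) u$ lies in $\mathcal{N}_\lambda(\G) \cap X_e$ (scaling preserves the argmax), so $J_\lambda(\tilde u) \geq c_\lambda(\G,e)$. Combined with the Fatou inequality $\kappa \|u\|_p^p \leq \liminf J_\lambda(u_n) = c_\lambda(\G,e)$, this gives $\pi_\lambda(u) \geq 1$, or equivalently $\|u'\|_2^2 + \lambda \|u\|_2^2 \geq \|u\|_p^p$.

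The core of the argument is to rule out escape of mass. Write $w_n := u_n - u \rightharpoonup 0$; the Brezis--Lieb splitting of the $L^p$, $L^2$ and homogeneous $H^1$ norms, inserted into the Nehari identity for $u_n$ and combined with $\pi_\lambda(u) \geq 1$, gives
\[
\|w_n'\|_2^2 + \lambda \|w_n\|_2^2 \leq \|w_n\|_p^p + o(1),
\]
so $\pi_\lambda(w_n) \leq 1 + o(1)$ as long as $\|w_n\|_p$ stays bounded away from zero. If by contradiction $\|w_n\|_p \not\to 0$, then $\pi_\lambda(w_n) w_n \in \mathcal{N}_\lambda(\G)$, and Theorem \ref{notatt} (applicable since $\G$ satisfies (H)) gives $J_\lambda(\pi_\lambda(w_n)w_n) \geq s_\lambda$, whence $\kappa \lim_n \|w_n\|_p^p \geq s_\lambda$. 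Inserting this into the Brezis--Lieb identity for $J_\lambda$ and using the lower bound on $\|u\|_p^p$ produces
\[
c_\lambda(\G,e) \geq \kappa c_0 + s_\lambda.
\]
On the other hand, Lemma \ref{lem:infimum_long_edges} applied to $e$ itself provides a test function in $\mathcal{N}_\lambda(\G) \cap X_e$ (concentrated inside $e$) of action $s_\lambda + \eps(R)$, with $\eps(R) \to 0$ as $R \to \infty$ at a rate controlled only by the exponential decay of $\phi_\lambda$, hence by $\lambda$ and $p$. Choosing $\overline{R}$ so that $\eps(\overline{R}) < \kappa c_0$ forces a contradiction whenever $R \geq \overline{R}$.

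Once escape of mass is ruled out we have $u_n \to u$ strongly in $L^p(\G)$, so $\kappa \|u\|_p^p = c_\lambda(\G,e)$, and weak lower semicontinuity of the $H^1$ norm then forces $\pi_\lambda(u) \leq 1$. Combined with the projection estimate $\pi_\lambda(u) \geq 1$, we conclude $u \in \mathcal{N}_\lambda(\G) \cap X_e$ with $J_\lambda(u) = c_\lambda(\G,e)$. The main obstacle is the escape-of-mass step of paragraph three: the constraint $X_e$ by itself pins the global maximum of $u_n$ to the compact edge $e$, but one must still rule out a secondary soliton-like bump forming far from $e$; this is exactly what is achieved by playing the lower bound $c_\lambda(\G,e) \geq \kappa c_0 + s_\lambda$ against the upper bound $c_\lambda(\G,e) \leq s_\lambda + \eps(R)$ to fix the length threshold $\overline{R}$.
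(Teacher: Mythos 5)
Your argument is correct and shares the overall concentration--compactness skeleton with the paper's proof (minimizing sequence, weak limit $u\in X_e\setminus\{0\}$, projection estimate $\pi_\lambda(u)\ge 1$, the fact that an escaping part $w_n=u_n-u$ with $\|w_n\|_p\not\to 0$ has $\pi_\lambda(w_n)\le 1+o(1)$ and therefore carries action at least $s_\lambda$ by assumption (H), and the upper bound $c_\lambda(\G,e)\le s_\lambda+\eps(R)$ of Remark \ref{level}), but you close the dichotomy by a genuinely different and more elementary device. The paper never estimates $\|u\|_p^p$ directly from the $L^\infty$ bound: it tracks the functional $L$ of Lemma \ref{prova}, shows $\lim_n L(u_n-u)=\lambda+\tfrac{m}{\mu-m}(\lambda-L(u))>\lambda$, applies Lemma \ref{Lem 3.5} to get the refined bound $\kappa\liminf_n\|u_n-u\|_p^p\ge s_\theta$ with $\theta>\lambda$, and then uses the $L^\infty$ lower bound on $e$ only through Gagliardo--Nirenberg to control the ratio $\mu/m\le C(\lambda,p)$, which converts $\eps\ge\tfrac{m}{\mu-m}(\lambda-L(u))$ into $L(u)\ge\lambda-C\eps$ and finally into $\kappa\|u\|_p^p\ge s_1(\lambda-C\eps)^\alpha$, the contradiction coming from the numerical condition \eqref{choixeps}. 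You instead convert the same information ($\|u\|_{L^\infty(e)}\ge\lambda^{1/(p-2)}$, inherited from the Nehari identity and the uniform convergence on the compact edge $e$) into an explicit lower bound $\|u\|_p^p\ge c_0(\lambda,p)$ via the one--dimensional $1/2$--H\"older estimate $|u(x)-u(y)|\le\|u'\|_2\,|x-y|^{1/2}$ along $e$, after which the crude escape cost $\kappa\lim_n\|w_n\|_p^p\ge s_\lambda$ already contradicts $c_\lambda(\G,e)\le s_\lambda+\eps(\overline R)<s_\lambda+\kappa c_0$; no use of $L$, of Lemma \ref{Lem 3.5}, or of the ratio $\mu/m$ is needed. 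This buys a shorter proof that exploits the constraint $X_e$ more directly (the maximum point is pinned to the long edge, so a definite amount of $L^p$ mass cannot escape); the only point worth making fully explicit is the order of the choices, namely that the H\"older radius $\min\{(M_0/2\|u'\|_2)^2,R\}$ and hence $c_0$ stabilize to quantities depending only on $\lambda$ and $p$ as soon as $R$ exceeds a first $(\lambda,p)$--threshold, and only then is $\overline R$ enlarged so that $\eps(\overline R)<\kappa c_0$, so that no circularity between $c_0$ and $\overline R$ arises.
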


\begin{theorem}
	\label{usol}
	Let $\G\in {\bf G}$ satisfy assumption (H)
and have  a bounded edge $e$ of length $R$. Let $\ell_0\leq \displaystyle \inf_{e \in \mathbb E} |e|$.
Then there exists $\widetilde{R}$ 
	depending only on $\ell_0$, $\lambda$ and $p$
	such that if $R \ge \widetilde{R}$ and $u$ is a minimizer for $c_\lambda(\G,e)$, then $u\in\Sf_{\lambda}(\G)$
	and $u > 0$ or $u < 0$ on $\G$. Moreover,
	\begin{equation*}
		\| u \|_{L^{\infty}(e)} > \| u \|_{L^{\infty}(\G \setminus e)}.
	\end{equation*}
\end{theorem}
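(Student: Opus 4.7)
The plan is to proceed in three stages: reduce to the nonnegative case, prove the strict inequality $\|u\|_{L^\infty(e)} > \|u\|_{L^\infty(\G\setminus e)}$, and then deduce both the equation and the sign. The main obstacle will be the strict inequality.

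Stage~1 is immediate: $|u| \in \NL(\G) \cap X_e$ with $J_\lambda(|u|) = J_\lambda(u)$, so $|u|$ is itself a minimizer. I will assume $u \ge 0$ while working on Stages~2 and~3, and recover the sign of the original minimizer at the very end.

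The hard part will be Stage~2. I plan a contradiction argument, playing two opposite bounds on $J_\lambda(u) = c_\lambda(\G,e)$ against each other. On the upper side, the soliton-like test functions of Lemma~\ref{lem:infimum_long_edges}, being supported on $e$, automatically lie in $X_e$, so
\[
c_\lambda(\G,e) \le s_\lambda + o_R(1), \qquad R\to\infty.
\]
On the lower side, I aim to show that if $u$ attains its maximum $M$ at both $\xi_e \in \bar e$ and $\xi_o\in\overline{\G\setminus e}$, then
\[
J_\lambda(u) \ge s_\lambda + c_0, \qquad c_0 = c_0(\ell_0,\lambda,p) > 0,
\]
so that the two estimates become incompatible once $R \ge \widetilde R$ is large enough. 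To produce $c_0$, I will refine the rearrangement argument underlying Theorem~\ref{notatt}: assumption~(H) applied simultaneously at each of the two peaks yields four disjoint outgoing arcs along which $u$ decreases, which upgrades the preimage count to $\#u^{-1}(t)\ge 4$ on a left neighborhood $(M-\delta_0, M)$ of $M$, while preserving $\#u^{-1}(t)\ge 2$ a.e.\ elsewhere. The crucial technical point will be to force $\delta_0$ to depend only on $\ell_0,\lambda,p$; this should follow from the uniform bounds of Proposition~\ref{boundedness} together with the assumption that the edges containing the peaks have length at least $\ell_0$. Applying Proposition~\ref{Ncontr} with $K=2$ on $(0, M-\delta_0)$ and $K=4$ on $(M-\delta_0, M)$, and then mimicking the rescaling $u_K(x) = u^*(Kx)$ from the proof of Proposition~\ref{sK2}, I expect to produce a function in $\NL(\R^+)$ with projection factor $\pi_\lambda \le 1-\eta$, $\eta=\eta(\ell_0,\lambda,p)>0$; formula \eqref{infR+} then yields $J_\lambda(u) \ge s_\lambda\, \pi_\lambda^{-p} \ge s_\lambda+c_0$ as required.

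Stage~3 is then essentially routine. With the strict inequality $\|u\|_{L^\infty(e)} > \|u\|_{L^\infty(\G\setminus e)}$ in hand, the continuous embedding $H^1(\G)\hookrightarrow L^\infty(\G)$ supplied by \eqref{GNinfty} ensures that every sufficiently small $H^1$-perturbation of $u$ remains in $X_e$; hence $u$ is a free critical point of $J_\lambda$ restricted to $\NL(\G)$, which by the natural-constraint property forces $u \in \Sf_\lambda(\G)$. The sign statement follows from ODE uniqueness on each edge combined with the Kirchhoff condition at vertices: a putative zero of $u$ would force both $u$ and all outward derivatives to vanish simultaneously there, and by connectedness of $\G$ this would propagate to $u\equiv 0$. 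Applied to $|u|$ of the original (possibly signed) minimizer, this forces the original $u$ to have constant sign.
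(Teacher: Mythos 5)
Your Stages 1 and 3 are sound and coincide with what the paper does: $|u|$ is again a minimizer, and once the strict inequality $\|u\|_{L^{\infty}(e)}>\|u\|_{L^{\infty}(\G\setminus e)}$ is established, $u$ is an interior point of $\NN_\lambda(\G)\cap X_e$, hence a local minimizer of $J_\lambda$ on the Nehari manifold and therefore a solution, and the sign follows from the strong maximum principle. The gap is in Stage 2, which is the heart of the proof.

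The step you propose there --- that if the maximum $M$ is attained both on $\bar e$ and on $\overline{\G\setminus e}$, then $\#u^{-1}(t)\ge 4$ for $t$ in an interval $(M-\delta_0,M)$ with $\delta_0=\delta_0(\ell_0,\lambda,p)$ --- does not hold as stated, for two separate reasons. First, assumption (H) provides two pairwise disjoint escape curves from \emph{each} peak, but a curve issued from one peak may overlap a curve issued from the other, and (H) says nothing about $u$ decreasing along them; what matters is the number of connected components of $\{u>t\}$, and the two peaks can lie in the same component for every $t<M$ (e.g.\ if $u$ equals $M$ on a connected set meeting both $\bar e$ and $\overline{\G\setminus e}$, or if the maximum is attained at a vertex of $e$, which is perfectly compatible with the contradiction hypothesis since such a vertex belongs to $\overline{\G\setminus e}$). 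In such configurations the count near the top can be $3$ or even $2$, never $4$; the paper itself only obtains $\#u^{-1}(t)\ge 3$, and only for $t\ge\delta_R:=\max_{h\in\BB}\min_h u$, via the observation that for such $t$ no component of $\{u>t\}$ contains an entire edge. Second, and more fatally for your quantitative conclusion, there is no uniform lower bound on the length of the value-interval where the improved count holds: $M-\delta_R$ can be arbitrarily small, and then the coarea-type estimate you would need to bound $\|u'\|_{L^2(A)}^2$ from below (hence to get $\pi_\lambda\le 1-\eta$ with uniform $\eta$) degenerates, so no lower bound $J_\lambda(u)\ge s_\lambda+c_0$ with $c_0=c_0(\ell_0,\lambda,p)$ comes out. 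The paper closes this gap differently: the preimage set $A_R=u^{-1}([\delta_R,M])$ contains an \emph{entire bounded edge}, hence $|A_R|\ge\ell_0$, and one argues by dichotomy using the quantitative $H^1$-stability of the soliton (Remark \ref{levels_close_to_s_lambda}): either $\|u'\|_{L^2(A_R)}^2$ is not small, in which case \eqref{major_pi_w_R} pushes $\pi_\lambda(\widehat u)^p J_\lambda(u)$ strictly above the level allowed by \eqref{Q2}; or it is small, in which case the symmetric rearrangement carries almost no derivative mass on $(-\ell_0/2,\ell_0/2)$, contradicting its $H^1$-closeness to $\phi_\lambda$, which has a definite amount of derivative there. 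This soliton-stability ingredient is absent from your outline and appears necessary; preimage counting alone does not yield the uniform gap you need.
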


\begin{remark}
It is interesting to observe the difference of dependence of the threshold in the two previous results. In the first one, there is no dependence on the rest of the graph while in the second one, there is a slight dependence on the infimum of the length of the other edges.
\end{remark}

A straightforward consequence of the preceding theorems is the following multiplicity result.

\begin{theorem}
\label{multiple}
Under the assumptions of Theorem \ref{usol}, 
there exists $\widetilde{R} > 0$
depending only on $\ell_0$,  $\lambda$ and $p$
such that 
for every bounded  edge $e$ of length larger than $\widetilde{R}$, 
problem \eqref{NLS} has a positive solution attaining its absolute maximum on $e$ only.
Hence, 
if $\G$ has $n$ bounded edges of length greater than $\widetilde{R}$,
then problem \eqref{NLS} has at least $n$ distinct positive solutions.
\end{theorem}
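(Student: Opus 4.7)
The plan is to observe that Theorem \ref{multiple} is essentially a direct corollary of Theorems \ref{exlarge} and \ref{usol}: both attainment and the qualitative properties of minimizers of the doubly-constrained problem have already been established for sufficiently long edges. Thus, I would first define $\widetilde{R}$ to be the maximum of the two thresholds appearing in those statements, namely $\overline{R}$ from Theorem \ref{exlarge} and the threshold from Theorem \ref{usol}. This constant depends only on $\ell_0$, $\lambda$ and $p$, as required.

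Next, for any bounded edge $e$ of $\G$ with $|e| > \widetilde{R}$, I would apply Theorem \ref{exlarge} to produce a function $u \in \mathcal{N}_{\lambda}(\G) \cap X_e$ attaining $c_\lambda(\G,e)$, and then invoke Theorem \ref{usol} to conclude that $u \in \Sf_\lambda(\G)$, that $u$ has constant sign on $\G$, and that
\[
\|u\|_{L^\infty(e)} > \|u\|_{L^\infty(\G \setminus e)}.
\]
Replacing $u$ by $-u$ if necessary (which leaves \eqref{NLS} invariant, because the nonlinearity $|u|^{p-2}u$ is odd and $J_\lambda$ is even), I may assume $u > 0$ on $\G$. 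The strict inequality above guarantees that this positive solution attains its absolute maximum on $e$ and nowhere else in $\G$.

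Finally, if $e_1, \dots, e_n$ are distinct bounded edges of length greater than $\widetilde{R}$, the procedure above yields positive solutions $u_1, \dots, u_n \in \Sf_\lambda(\G)$ such that, for each $i$, the function $u_i$ attains its global maximum exclusively on $e_i$. Since the $e_i$ are pairwise disjoint, the locus of the maximum already distinguishes these functions: $u_i \neq u_j$ whenever $i \neq j$. This produces the $n$ distinct positive solutions claimed in the statement.

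There is no genuine obstacle to this argument: the substantive analytical work, namely the existence of minimizers for the doubly-constrained problem on a long edge and the verification that such minimizers actually solve \eqref{NLS} with maximum strictly located on $e$, has already been absorbed into Theorems \ref{exlarge} and \ref{usol}. The only point worth highlighting is that the separation of the solutions relies crucially on the \emph{strict} inequality $\|u\|_{L^\infty(e)} > \|u\|_{L^\infty(\G\setminus e)}$, not merely on $u \in X_e$, since the latter condition alone would not rule out that the same solution might happen to sit in $X_{e_i} \cap X_{e_j}$ for two different edges.
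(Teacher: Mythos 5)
Your proof is correct and is exactly the argument the paper intends: Theorem \ref{multiple} is stated there as a straightforward consequence of Theorems \ref{exlarge} and \ref{usol}, with no further details given. Your observation that the strict inequality $\|u\|_{L^\infty(e)} > \|u\|_{L^\infty(\G\setminus e)}$ (rather than mere membership in $X_e$) is what separates the $n$ solutions is the right point to highlight.
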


For the proof of Theorems \ref{exlarge}--\ref{usol} we need the following lemmas. 

\begin{lemma}
\label{Lem 3.5}
Let $\G\in {\bf G}$ satisfy assumption (H). 
If $(u_n)_n$ is a bounded sequence in $H^1(\G)$
such that $\displaystyle\liminf_n \| u_n \|_{p} > 0$
and $\displaystyle\lim_n L(u_n) = \theta > 0$,  then 
\begin{equation*}
\liminf_{n \rightarrow \infty} \| u_n \|_p^p
	\ge  \frac{s_{\theta}}{\kappa},
\end{equation*}
where $s_\theta$ is defined in \eqref{solitonlevel}.
\end{lemma}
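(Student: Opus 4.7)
The plan is to recognize that the assumption $L(u_n) \to \theta > 0$ amounts exactly to saying $u_n \in \mathcal{N}_{\theta_n}(\G)$ for $\theta_n := L(u_n)$. For $n$ large enough that $\theta_n > 0$, identity \eqref{formJ} yields $J_{\theta_n}(u_n) = \kappa\|u_n\|_p^p$. I would then invoke the (H)-rearrangement estimate derived in Section \ref{sec:prel} right before Theorem~\ref{notatt}: under assumption (H), every $v\in\mathcal{N}_\mu(\G)$ with $\mu>0$ satisfies $J_\mu(v) \ge s_\mu$. Applied with $\mu = \theta_n$ and $v = u_n$ this gives
\[
s_{\theta_n} \le \kappa\|u_n\|_p^p,
\]
after which passing to the liminf and using continuity of $\mu \mapsto s_\mu = s_1 \mu^\alpha$ at $\theta>0$ produces the claim.

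The one technical wrinkle is that the rearrangement argument in the preliminaries is phrased for nonnegative $H^1$-functions, whereas no sign condition is imposed on the $u_n$. My fix is to work with $|u_n|$ in place of $u_n$: by Stampacchia's theorem $\||u_n|'\|_2 \le \|u_n'\|_2$, while $\||u_n|\|_q = \|u_n\|_q$ for every $q$, so $L(|u_n|) \ge L(u_n) = \theta_n$. Setting $\mu_n := L(|u_n|)$, the (H) argument applied to $|u_n| \in \mathcal{N}_{\mu_n}(\G)$ gives $s_{\mu_n} \le \kappa\||u_n|\|_p^p = \kappa\|u_n\|_p^p$. Strict monotonicity of $\mu \mapsto s_\mu$ on $(0,+\infty)$ (which holds since $\alpha = (p+2)/(2(p-2)) > 0$) then forces $s_{\mu_n} \ge s_{\theta_n}$, so the chain of inequalities survives unchanged.

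No step looks genuinely hard: all the required machinery has been assembled in Section \ref{sec:prel}. The two points worth checking carefully are that $\theta_n$ is positive for $n$ large, which is immediate from $\theta_n \to \theta > 0$, and that $u_n \not\equiv 0$ so that $L(u_n)$ is defined, which is guaranteed by the hypothesis $\liminf_n \|u_n\|_p > 0$. The $H^1$-boundedness of the sequence does not seem to enter the inequality above; I suspect it is included in the statement mostly because it is the form in which the lemma will be invoked in Section \ref{sec:doppiovinc}.
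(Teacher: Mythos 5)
Your proof is correct and rests on the same key ingredient as the paper's, namely the identity $\inf_{v\in\mathcal{N}_\mu(\G)}J_\mu(v)=s_\mu$ under assumption (H) (Theorem \ref{notatt}); the difference lies in how the mismatch between the level $\theta_n=L(u_n)$ and the limit level $\theta$ is handled. The paper keeps the level fixed at $\theta$ and projects: it computes $\pi_\theta(u_n)^{p-2}=1+(\theta-\theta_n)\|u_n\|_2^2/\|u_n\|_p^p$, uses the $L^2$-boundedness of $(u_n)_n$ together with $\liminf_n\|u_n\|_p>0$ to conclude $\pi_\theta(u_n)\to1$, and then applies Theorem \ref{notatt} to $\pi_\theta(u_n)u_n\in\mathcal{N}_\theta(\G)$. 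You instead apply the infimum characterization at the varying level $\theta_n$ itself, obtaining $\kappa\|u_n\|_p^p\ge s_{\theta_n}$ for each large $n$, and pass to the limit using the continuity and monotonicity of $\mu\mapsto s_1\mu^\alpha$. Your route is slightly more economical: as you correctly observe, it makes the hypotheses of $H^1$-boundedness and $\liminf_n\|u_n\|_p>0$ essentially superfluous (beyond guaranteeing that $L(u_n)$ is defined), whereas the paper genuinely uses both to control the projection factor. Your extra care with the sign (replacing $u_n$ by $|u_n|$ before rearranging) is legitimate and in fact more scrupulous than the paper's own proof, which applies Theorem \ref{notatt} directly to the possibly sign-changing $\pi_\theta(u_n)u_n$; note that in one dimension $\||u_n|'\|_2=\|u_n'\|_2$, so $L(|u_n|)=L(u_n)$ and even the monotonicity step in your fix is not strictly needed.
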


\begin{proof}
Let $\theta_n := L(u_n)$, so that $u_n \in \mathcal{N}_{\theta_n}(\G)$
and $\theta = \displaystyle\lim_n \theta_n$. Then, since $(u_n)_n$ is bounded in $L^2(\G)$, $\displaystyle\liminf_n \|u_n\|_{p} > 0$ and $\displaystyle\lim_n \theta_n=\theta$,  we have
\begin{align*}
\pi_{\theta}(u_n)^{p-2} &=  \frac{\|u_n'\|_{2}^{2} + \theta \|u_n\|_{2}^{2}}{\|u_n\|_{p}^{p}}
= \frac{\|u_n'\|_{2}^{2} + \theta_n \|u_n\|_{2}^{2}}{\|u_n\|_{p}^{p}}
+ (\theta - \theta_n)\frac{ \|u_n\|_{2}^{2}}{\|u_n\|_{p}^{p}} \\
&= 1 +  (\theta - \theta_n)\frac{\|u_n\|_{2}^{2}}{\|u_n\|_{p}^{p}},
\end{align*}
whence
\[
\lim_n \pi_{\theta}(u_n)=1.
\]
By Theorem \ref{notatt}, we conclude that
\[
s_{\theta} = \inf_{v \in \mathcal{N}_\theta(\G)} J_{\theta}(v) \le \liminf_n J_{\theta}(\pi_{\theta}(u_n) u_n)
= \liminf_n \kappa \pi_{\theta}(u_n)^p  \| u_n \|_{p}^{p}
= \liminf_n \kappa \| u_n \|_{p}^{p}
\]
and the result follows.
\end{proof}

The next result describes the behavior of minimizing sequences for problem \eqref{minXe}.

\begin{lemma}
\label{prova} 
Let  $\G\in {\bf G}$  and let $e\in \G$ be a bounded edge.
Let $(u_n)_n \subset \NN_\lambda(\G) \cap X_e$ be any minimizing sequence for $J_{\lambda}$ in $\NN_\lambda(\G) \cap X_e$.
Then $(u_n)_n$ admits a subsequence (not relabeled) such that 
\begin{itemize}
\item[1)] $u_n \rightharpoonup u$ in $H^1(\G)$, $u_n \to u$ in $L^q_{loc}(\G)$ for every $q \in [1,+\infty]$, 
$\inf \| u_n \|_p > 0$; 	
\item[2)] $\displaystyle\lim_{n}\|u_n\|_2^2 = \mu>0$;
\item[3)] $u \in X_e \setminus\{0\}$;
\item[4)]  $L(u) \le \lambda$;
\item[5)] if $L(u) = \lambda $, then $u$ is a minimizer for $J_{\lambda}$ on $\NN_\lambda(\G) \cap X_e$;
\item[6)] if $L(u) < \lambda$, then $m:=\|u\|_2^2 <\mu$ and
\begin{equation}
\label{limitL}
\lim_n L(u_n-u) = \lambda +\frac{m}{\mu- m}(\lambda- L(u))\,.
\end{equation}
\end{itemize}
\end{lemma}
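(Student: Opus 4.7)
The plan is to extract the subsequence through the standard boundedness arguments, and then to exploit a Brezis--Lieb type decomposition together with the Nehari identity for $u_n$ to propagate information from the sequence to the limit $u$ and to the remainder $v_n := u_n - u$.

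First I would obtain items 1) and 2). Since $(u_n)_n$ lies in $\NN_\lambda(\G)$ and $J_\lambda(u_n)$ is bounded, Proposition \ref{boundedness} gives boundedness in $H^1(\G)$ together with $\inf_n \|u_n\|_p > 0$, $\inf_n \|u_n\|_2 > 0$ and $\inf_n \|u_n\|_\infty > 0$. Passing to a subsequence I extract $u_n \rightharpoonup u$ in $H^1(\G)$, $u_n \to u$ in $L^q_{\rm loc}(\G)$ for every $q$ (using the embedding of $H^1$ into $C^{0,1/2}$ on each edge and a diagonal argument on an exhaustion by finite subgraphs), and $\|u_n\|_2^2 \to \mu$ with $\mu > 0$ by the lower bound above. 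For item 3), uniform convergence on the bounded edge $e$ gives $\|u\|_{L^\infty(e)} = \lim \|u_n\|_{L^\infty(e)} \ge \inf_n \|u_n\|_\infty > 0$, so $u \ne 0$; combining this with weak lower semicontinuity of $\|\cdot\|_{L^\infty(\G)}$ exactly as in Remark \ref{remVcl} yields $u \in X_e$.

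The heart of the proof is the Brezis--Lieb splitting applied to $\|u_n\|_p^p$, together with the Hilbert identities for the weakly convergent sequence, which yield, with $v_n := u_n - u$,
\begin{equation*}
\|u_n\|_p^p = \|u\|_p^p + \|v_n\|_p^p + o(1), \qquad \|u_n\|_2^2 = \|u\|_2^2 + \|v_n\|_2^2 + o(1), \qquad \|u_n'\|_2^2 = \|u'\|_2^2 + \|v_n'\|_2^2 + o(1).
\end{equation*}
Plugging these into the Nehari identity $\|u_n'\|_2^2 + \lambda\|u_n\|_2^2 - \|u_n\|_p^p = 0$ and using the algebraic identity $\|u'\|_2^2 + \lambda\|u\|_2^2 - \|u\|_p^p = (\lambda - L(u))\|u\|_2^2 = (\lambda - L(u))m$ produces the key relation
\begin{equation*}
(\lambda - L(u))\, m + \bigl(\|v_n'\|_2^2 + \lambda \|v_n\|_2^2 - \|v_n\|_p^p\bigr) = o(1).
\end{equation*}
To get item 4), suppose $L(u) > \lambda$; then $\pi_\lambda(u) < 1$, $\pi_\lambda(u) u \in \NN_\lambda(\G) \cap X_e$ (since $X_e$ is positively homogeneous), and
\begin{equation*}
c_\lambda(\G,e) \le J_\lambda(\pi_\lambda(u) u) = \kappa \pi_\lambda(u)^p \|u\|_p^p < \kappa \|u\|_p^p \le \kappa \liminf_n \|u_n\|_p^p = c_\lambda(\G,e),
\end{equation*}
a contradiction. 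Item 5) is then immediate: if $L(u) = \lambda$ then $u \in \NN_\lambda(\G) \cap X_e$ is a competitor, and the Brezis--Lieb splitting plus $J_\lambda(u_n) = \kappa\|u_n\|_p^p$ gives $J_\lambda(u) \le c_\lambda(\G,e)$, hence equality.

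Finally, for item 6), assume $L(u) < \lambda$, so the first summand in the key relation is strictly positive. If one had $m = \mu$, then $\|v_n\|_2 \to 0$; since $(v_n)_n$ is bounded in $L^\infty(\G)$, the bound $\|v_n\|_p^p \le \|v_n\|_\infty^{p-2}\|v_n\|_2^2$ forces $\|v_n\|_p \to 0$, and the key relation would then yield $\|v_n'\|_2^2 \to -(\lambda - L(u))\, m < 0$, impossible. Thus $m < \mu$. For the explicit limit, I compute
\begin{equation*}
L(v_n) = \frac{\|v_n\|_p^p - \|v_n'\|_2^2}{\|v_n\|_2^2},
\end{equation*}
whose numerator, using the splittings above and $\|u\|_p^p - \|u'\|_2^2 = m\, L(u)$, equals $\lambda\mu - m\, L(u) + o(1)$, while the denominator is $\mu - m + o(1) > 0$. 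The limit is therefore
\begin{equation*}
\lim_n L(v_n) = \frac{\lambda\mu - m\, L(u)}{\mu - m} = \lambda + \frac{m}{\mu - m}(\lambda - L(u)),
\end{equation*}
which is \eqref{limitL}. The main subtle point throughout is making sure the Brezis--Lieb style identities apply simultaneously to $L^p$, $L^2$ and the gradient $L^2$ norm on the noncompact graph, which is why the a.e. convergence coming from $L^q_{\rm loc}$ convergence established in item 1) is crucial.
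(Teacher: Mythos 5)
Your proof is correct and follows essentially the same route as the paper's: Proposition \ref{boundedness} for items 1)--2), weak closedness of $X_e$ plus the $L^\infty$ lower bound for 3), the projection $\pi_\lambda(u)u$ combined with weak lower semicontinuity of $\|\cdot\|_p$ for 4)--5), and the Brezis--Lieb splitting for 6). The only (minor) divergence is in proving $m<\mu$: the paper first shows $L(u_n-u)$ is bounded via the Gagliardo--Nirenberg inequality and then observes that the explicit quotient would blow up if $m=\mu$, whereas you read the contradiction directly off the Nehari identity combined with the splitting (forcing $\|v_n'\|_2^2$ to converge to a negative number); both arguments are valid.
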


\begin{proof} Since $(u_n)_n$ is a minimizing sequence, up to subsequences, {\em 1)} and {\em 2)} can be deduced from  Proposition \ref{boundedness}.

Notice that $u\not\equiv 0$ since if this were not the case, by $L^\infty_{loc}$ convergence,
and hence convergence in $L^{\infty}(e)$,
\[
\|u_n\|_{L^p(\G)}^p \le \|u_n\|_{L^\infty(\G)}^{p-2} \|u_n\|_{L^2(\G)}^2= \|u_n\|_{L^\infty(e)}^{p-2}  \|u_n\|_{L^2(\G)}^2\to 0,
\]
violating  {\em 1)}. Moreover, since $X_e$ is weakly closed (see Remark \ref{remVcl}), $u\in X_e$, and {\em 3)} is proved.
 
To prove {\em 4)} observe that by lower semicontinuity, since $\pi_\lambda(u)u \in \NN_\lambda(\G) \cap X_e$, the estimate
\begin{equation}
\label{pi1}
\kappa \| u \|_{p}^p \le \liminf_n \kappa \| u_n\|_{p}^p = \liminf_n J_{\lambda}(u_n) = c_\lambda(\G,e)
\le  J_{\lambda}(\pi_{\lambda}(u)u)
= \kappa \pi_{\lambda}(u)^p \| u \|_{p}^p
\end{equation}
yields $\pi_{\lambda}(u) \ge 1$, which is equivalent to $L(u) \le \lambda$.

Now if $L(u) = \lambda$, then $u \in \NN_\lambda(\G) \cap X_e$ and \eqref{pi1} shows that $u$ is a minimizer, which is {\em 5)}.

Finally, suppose that $L(u) < \lambda$ and hence $u_n - u \not\equiv 0$ for all $n$ large.
Obviously, by semicontinuity, $m \le \mu$.
To prove that the inequality is strict, first observe that by the Gagliardo--Nirenberg inequality \eqref{GN},
\begin{align}
\label{Lbounded}
L(u_n - u) &= \frac{\|u_n-u\|_{p}^p- \|u_n'-u'\|_{2}^2}{\|u_n-u\|_{2}^2} \le  \frac{\|u_n-u\|_{p}^p}{\|u_n-u\|_{2}^2} \le 
\frac{K\|u_n-u\|_{2}^{\frac{p}2+1} \|u_n'-u'\|_{2}^{\frac{p}2-1}}{\|u_n-u\|_{2}^2}
 \nonumber \\
& = K \|u_n-u\|_{2}^{\frac{p}2-1} \|u_n'-u'\|_{2}^{\frac{p}2-1} \le C,
\end{align}
for every $n$, since $(u_n)_n$ is bounded in $H^1(\G)$. Now, by the Brezis--Lieb Lemma \cite{BL}, as $n\to \infty$,
\begin{align*}
L(u_n - u) &= \frac{\|u_n-u\|_{p}^p- \|u_n'-u'\|_{2}^2}{\|u_n-u\|_{2}^2} = 
 \frac{\|u_n\|_{p}^p- \|u_n'\|_{2}^2- \|u\|_{p}^p+ \|u'\|_{2}^2+ o(1)}{\|u_n\|_{2}^2 - \|u\|_{2}^2+o(1) } \nonumber \\
&= \frac{\lambda \|u_n\|_{2}^2 -L(u) \|u\|_{2}^2 + o(1)}{\|u_n\|_{2}^2 - \|u\|_{2}^2+o(1)} = \lambda + 
\frac{(\lambda-L(u))\|u\|_2^2 +o(1)}{\|u_n\|_{2}^2 - \|u\|_{2}^2+o(1)}.
\end{align*}
Since $L(u) < \lambda$ and $u \not\equiv 0$, we see that $\mu = \lim_n\|u_n\|_2^2 > \|u\|_2^2 = m$,
since otherwise \eqref{Lbounded} is violated. Letting $n\to \infty$ in the preceding equality, we obtain \eqref{limitL} and the proof is complete.
\end{proof}

\begin{remark}
\label{level} 
As a consequence of Lemma \ref{lem:infimum_long_edges}, for every $\eps>0$ there exists $R_\eps>0$ such that, if $\G$ is any metric graph containing
a bounded edge $e$ of length greater than $R_\eps$, then
\begin{equation*}
c_{\lambda}(\G,e) \le  s_\lambda +\eps.
\end{equation*}
\end{remark}

We are now ready to prove the main results of this section.

\begin{proof}[Proof of Theorem \ref{exlarge}]
Fix $\eps>0$ such that
\begin{equation}
\label{choixeps}
s_{\lambda +\eps} = s_1(\lambda + \eps)^\alpha \le  2s_1 \lambda^\alpha = 2s_\lambda   , \qquad \lambda-C \eps>0 \quad\mbox{ and } \quad(\lambda + \eps)^\alpha< \lambda^\alpha + (\lambda -C \eps)^\alpha,
\end{equation}
with 
\begin{equation}
\label{DefC}
C=16 s_1^2/\kappa^2.
\end{equation}
Observe that $C$ depends only on $p$ and, hence, that $\eps$ depends only on $p$ and $\lambda$ but not on $\G$.

Let $\overline R$ be so large that for every $R \ge \overline R$,
\[
c_\lambda(\G,e) < s_{\lambda +\eps},
\]
which is possible by Remark \ref{level}. Again, we observe that $\overline R$ depends only on $p$ and $\lambda$ but not on $\G$.

Let $(u_n)_n \subset  \NN_\lambda(\G)\cap X_e$ be a minimizing sequence for $J_{\lambda}$ such that
\[
J_{\lambda}(u_n) \le s_{\lambda +\eps}
\]
for every $n$. Applying Lemma \ref{prova}, $(u_n)_n$ has (up to subsequences) a weak limit $u \in X_e \setminus\{0\}$ such that $L(u) \le \lambda$ and,
in case equality holds, $u$ is the required minimizer. 

We now show that $L(u) < \lambda$ cannot happen, and this will end the proof. If $L(u) < \lambda$, by \eqref{limitL},
\[
\lim_n L(u_n-u)
= \lambda + \frac{m}{\mu- m}(\lambda- L(u))
> \lambda > 0, 
\]
with $0 < m = \|u\|_2^2 < \lim_n \|u_n\|_2^2 = \mu$.
We note that
\begin{equation}
	\label{u_n_not_cvg_Lp}
	\liminf_n \| u_n - u \|_{p} > 0,
\end{equation}
as otherwise,  up to a subsequence,  $(u_n)_n$ converges to $u$ in $L^p(\G)$,  and by lower semicontinuity we would have 
\begin{equation*}
	\lambda
	> L(u)
	= \frac{\| u \|_p^p - \| u' \|_2^2}{\| u \|_2^2}
	\ge \liminf_{n} \frac{\| u_n \|_p^p - \| u_n' \|_2^2}{\| u_n \|_2^2}
	= \lambda,
\end{equation*}
a contradiction.

As $\G\in {\bf G}$ satisfies  assumption (H),
\eqref{u_n_not_cvg_Lp}, \eqref{limitL} and Lemma~\ref{Lem 3.5} give
\begin{equation}
\label{levone}
\liminf_n \|u_n -u\|_p^p \ge \frac{s_1}{\kappa}\left(\lambda +\frac{m}{\mu-m}(\lambda- L(u))\right)^\alpha.
\end{equation}
Notice that 
\begin{equation}
\label{boundmu}
\lim_n  \left(\| u_n' \|_{2}^2 +\lambda  \| u_n \|_{2}^2 \right)= \frac1{\kappa} \lim_n J_{\lambda}(u_n) \le  \frac{s_{\lambda+\eps}}{\kappa} \le \frac{2s_1}{\kappa}\lambda^{\alpha}
\end{equation}
and hence 
\begin{equation}
\label{boundmubis}
\mu = \lim_n  \| u_n \|_{2}^2 \le \lim_n \f1\lambda \left(\| u_n' \|_{2}^2 +\lambda  \| u_n \|_{2}^2 \right) \le\frac{2s_1}{\kappa} \lambda^{\alpha-1}. 
\end{equation}
Furthermore, as
\begin{equation}
\label{bounduinfty1}
\lambda \| u_n \|_{2}^2 \le \| u_n' \|_{2}^2 +\lambda  \| u_n \|_{2}^2 =  \| u_n \|_{p}^ p \le  \| u_n \|_{\infty}^{p-2}  \| u_n \|_{2}^2,
\end{equation}
we see that 
\begin{equation}
\label{bounduinfty2}
 \| u \|_{L^\infty(\G)} =  \| u \|_{L^\infty(e)} = \lim_n  \| u_n \|_{L^\infty(e)}  =\lim_n  \| u_n \|_{L^\infty(\G)}\ge \lambda^{\frac1{p-2}}
\end{equation}
and therefore, by the Gagliardo-Nirenberg inequality \eqref{GNinfty} and \eqref{boundmu}, recalling that $m=\|u\|_2^2$, we have
\begin{equation}
\label{boundm}
\lambda^{\frac4{p-2}} \le \| u \|_{\infty}^4 \le 4 m \| u' \|_{2} ^2 \le  4 m \liminf_n   \| u_n'\|_{2}^2 \le  \frac{4m}\kappa    s_{\lambda +\eps} \le \frac{8 s_1m}\kappa   \lambda^\alpha.
\end{equation}
Thus, recalling from  \eqref{solitonlevel} the value of $\alpha$, we see from \eqref{boundmubis} and \eqref{boundm} that
\begin{equation}
\label{boundrap}
\frac{\mu}m \le \frac{16s_1^2}{\kappa^2}\lambda^{2\alpha -1-\frac{4}{p-2}}  = 
% \frac{16s_1^2}{\kappa^2} =
C,
\end{equation}
with $C$ given by \eqref{DefC}. 

In conclusion, by the Brezis--Lieb Lemma and \eqref{levone},
\begin{align}
\label{final1}
s_1(\lambda +\eps)^\alpha
&\ge c_\lambda(\G,e)
= \lim_n J_{\lambda}(u_n)
= \lim_n \kappa \| u_n \|_p^p
= \lim_n \kappa \left( \| u_n - u \|_p^p + \| u \|_p^p \right)\nonumber \\
& \ge s_1 \left( \lambda +\frac{m}{\mu-m}(\lambda - L(u)) \right)^\alpha + \kappa  \| u \|_p^p.
\end{align}
Neglecting the last term, we obtain
\[
\eps \ge \frac{m}{\mu-m}(\lambda- L(u)),
\]
or, rearranging terms and using \eqref{boundrap}, 
\[
L(u) \ge \lambda - \frac{\mu- m}{m}\eps > \lambda - C\eps>0,
\]
Using this, we see from Lemma  \ref{Lem 3.5} that
$$
\kappa \|u\|_p^p \geq s_1 L(u)^{\alpha} \geq  s_1 ( \lambda - C\eps)^{\alpha}.
$$
Hence, by \eqref{final1},
\[
(\lambda +\eps)^\alpha \ge \lambda^\alpha + (\lambda- C\eps)^\alpha 
\]
which contradicts \eqref{choixeps}.
\end{proof}

\begin{remark}
\label{levels_close_to_s_lambda}
Before proving Theorem \ref{usol} we recall that, letting 
\[
\mathcal{D} := \Bigl\{u \in H^1(\R) \bigm\vert u \ge 0, u \text{ is even},
u \text{ is nonincreasing on } [0, +\infty) \Bigr\},
\]
for every $\eps >0$ there exists $\delta >0$ such that, for every $u \in \NL(\R) \cap \mathcal{D}$ with $J_\lambda(u) \le s_\lambda + \delta$, there results
\[
\| u - \phi_{\lambda} \|_{H^1} \le \eps.
\]
This can be readily seen by standard compactness arguments and the uniqueness of the minimizer of $J_\lambda$ in $\NL(\R) \cap \mathcal{D}$ (see e.g \cite{cazenave}).

\end{remark}

\begin{proof}[Proof of Theorem \ref{usol}] 
		By Remark \ref{levels_close_to_s_lambda}, there exists $\delta >0$ such that for every $u \in \NN_{\lambda}(\R) \cap \mathcal{D}$ with $J_{\lambda}(u) \le s_\lambda + \delta$ there results 
\begin{equation}
\label{Q0}
\| u - \phi_{\lambda} \|_{H^1} \le \frac13\| \phi_{\lambda}' \|_{L^2(-\ell_0/2, \ell_0/2)}.
\end{equation}
Let $0<\eps\leq \delta$ satisfy
\begin{equation}
\label{Q1}
\left[\frac45 \left(1-\left(\frac{s_{\lambda}}{s_{\lambda}+\eps}\right)^{\frac{p-2}{p}}\right) \frac{s_{\lambda}+\eps}{\kappa}  \right]^{1/2}
\leq \frac13\| \phi_{\lambda}' \|_{L^2(-\ell_0/2, \ell_0/2)}
\end{equation}
and, accordingly to  Remark \ref{level}, let $\widetilde R >0$ be 
such that
\begin{equation}
\label{Q2}
c_{\lambda}(\G,e) \le  s_\lambda +\eps
\end{equation}
for every edge $e$ with length $R \ge \widetilde R$.
Observe that $\widetilde R$ depends only on $\lambda$, $p$ and $\ell_0$.

Let $u_R\in\NN_\lambda(\G)\cap X_e$
	be a function satisfying $J_{\lambda}(u_R) = c_\lambda(\G,e)$.
	To show that $u_R\in\mathcal{S}_\lambda(\G)$, it is enough to prove that
	\begin{equation}
		\label{interno}
		\|u_R\|_{L^\infty(e)}>\|u_R\|_{L^\infty(\G\setminus e)}.
	\end{equation}
	Indeed, \eqref{interno} implies that $u_R$ belongs
	to the relative interior of $\NN_\lambda(\G)\cap X_e$,
	and therefore it is not only a global minimizer of $J_{\lambda}$ in the double constraint space,
	but also a local minimizer in $\NN_\lambda(\G)$, and, as such, solves \eqref{NLS}.	Since $|u_R|\in\NN_\lambda(\G)\cap X_e$ and $J_{\lambda}(u_R)=J_{\lambda}(|u_R|)$,
	we will assume that $u_R \ge 0$ on $\G$.
	 
	We proceed by contradiction and assume that
	\[
	\|u_R\|_{L^\infty(e)}=\|u_R\|_{L^\infty(\G\setminus e)}.
	\]
Let $M_R := \|u_R\|_{L^\infty(e)}$, denote by $\BB$
	the set of all bounded edges of $\G$ and set
	\begin{equation}
		\label{defdelta}
		\delta_R:=\max_{h\in \BB} \,\min_{x\in h} u_R(x)\qquad\quad (0\leq\delta_R\leq M_R).
	\end{equation}
	The definition of $\delta_R$ as a maximum is correct even
	if $\BB$ contains infinitely many edges.
	Indeed, as $\|u_R\|_{2}$ is finite
	and the length of the edges is bounded from below by $\ell_0$,
	there is only a finite number of edges $h$ where $\min_h u_R \ge t$, for every $t >0$.
	
	Since $\G$ satisfies assumption (H),  by Step 1 of the proof of Lemma 4.2 of \cite{ASTbound},
	\begin{equation}
		\label{3ormore}
		\#u_R^{-1}(t) \ge 3 \quad\text{ for almost every $t \in [\delta_R, M_R]$}.
	\end{equation}
	Notice that the set
	\[
	A_R := \big\{x \in \G \mid u_R(x) \in [\delta_R, M_R] \big\}
	\]
	contains at least one bounded edge of $\BB$
	(the one where the minimum of $u_R$ is exactly $\delta_R$) and therefore
	we have 
	\[
	\big | A_R \big | \ge \ell_0.
	\]
Let now $\widehat u_R$ be the symmetric rearrangement of $u_R$.
By Proposition \ref{Ncontr} applied to $T_R= [\delta_R, M_R]$  and \eqref{3ormore}, defining 
$\ell_R=\big | A_R \big |/2$,
we have
	\begin{equation}
		\label{strongPS}
		\Vert \widehat u_R'\Vert_{L^2(-\ell_R, \ell_R)}^2 = 4\| (u^*_R)' \|_{L^2(0,2\ell_R)}^2  \leq\frac {4}{9}
		\Vert u_R'\Vert_{L^2(A_R)}^2.
	\end{equation}
	Next, denoting\footnote{If $\delta_R = 0$, the set $B_R$ is empty,
		$A_R = \G$, and the proof is simpler,
		working with $A_R$ only. }
	by $B_R = \big\{x \in \G \mid u_R(x) \in [0, \delta_R) \big\}$, since by assumption (H)
	\[
	\#u_R^{-1}(t) \ge 2 \quad\text{ for almost every $t \in (0, \delta_R)$},
	\]
	we obtain 
	\[
	\|\widehat u_R' \|_{L^2(\R\setminus (-\ell_R,\ell_R))}
	 \le \| u_R'\|_{L^2(B_R)}.
	\]
	From these relations it follows
	\[
	\|\widehat u_R'\|_{L^2(\R)}^2  \le \frac49 \| u_R'\|_{L^2(A_R)}^2 +\| u_R'\|_{L^2(B_R)}^2= \| u_R'\|_{L^2(\G)}^2 -\frac59 \| u_R'\|_{L^2(A_R)}^2
	\]
	so that
	\begin{equation}
		\label{major_pi_w_R}
		\pi_\lambda(\widehat u_R)^{p-2}
		\le
		\frac{\|u_R'\|_{L^2(\G)}^2
			+ \lambda \|u_R\|_{L^2(\G)}^2
			-\frac59  \| u_R'\|_{L^2(A_R)}^2}{\|u_R\|_{L^p(\G)}^p }
		= 1 - \frac59\frac{\| u_R'\|_{L^2(A_R)}^2}{\|u_R\|_{L^p(\G)}^p }.
	\end{equation}
		Since $\pi_\lambda(\widehat u_R)\widehat u_R \in \NN_\lambda(\R)$, by \eqref{infR} we obtain
	\begin{equation}
		\label{ineq_levels_w_R}
		s_\lambda
		\le J_{\lambda}(\pi_\lambda( \widehat u_R) \widehat u_R)
		= \kappa \pi_\lambda( \widehat u_R )^p \| \widehat u_R \|_p^p
		= \kappa \pi_\lambda( \widehat u_R )^p \| u_R \|_p^p
		= \pi_\lambda( \widehat u_R )^p J_{\lambda}(u_R).
	\end{equation}
	Using \eqref{ineq_levels_w_R} and recalling that $J_{\lambda}(u_R) = c_\lambda(\G,e)$,
	\eqref{Q2} and \eqref{major_pi_w_R} imply that
	\begin{equation}
		\label{level_close_to_soliton}
		s_{\lambda} \le J_{\lambda}(\pi_\lambda(\widehat u_R) \widehat u_R) \le s_{\lambda} + \eps.
	\end{equation}
		Since, by definition $\pi_\lambda(\widehat u_R) \widehat u_R$ belongs to $\mathcal{D}$, with $\mathcal{D}$ defined in Remark \ref{levels_close_to_s_lambda},
	\eqref{level_close_to_soliton},
	 \eqref{Q0}
and the fact that $\eps\leq \delta$ imply that
	\begin{equation*}
		\| \phi_{\lambda} - \pi_\lambda(\widehat u_R) \widehat u_R \|_{H^1(\R)} 
		\le \frac13\| \phi_{\lambda}' \|_{L^2(-\ell_0/2, \ell_0/2)},
	\end{equation*}
	which itself implies
	\begin{equation}
		\label{contrad}
		\| \phi_{\lambda}' \|_{L^2(-\ell_0/2, \ell_0/2)}
		\le \frac{\| \phi_{\lambda}' \|_{L^2(-\ell_0/2, \ell_0/2)}}{3} 
		+ \| \pi_\lambda(\widehat u_R) \widehat u_R' \|_{L^2(-\ell_0/2, \ell_0/2)}.
	\end{equation}
In order to obtain a contradiction, 	we now prove  that 
		\begin{equation}
		\label{small_deriv_norm_w_R}
		\| \pi_\lambda(\widehat u_R) \widehat u_R'\|_{L^2(-\ell_0/2, \ell_0/2)} \le  
		\frac{\| \phi_{\lambda}' \|_{L^2(-\ell_0/2, \ell_0/2)}}{3}.
	\end{equation}
	By \eqref{ineq_levels_w_R}, we have
	\[
	\frac{s_\lambda}{J_{\lambda}(u_R)} \le \pi_\lambda( \widehat u_R )^p.
	\]
	Using the previous inequality, \eqref{Q2}
	and \eqref{major_pi_w_R}, we deduce that
	\begin{equation}
		\label{control_pi_lambda}
		\Big(\frac{s_{\lambda}}{s_{\lambda}+\eps} \Big)^{1/p}\le \pi_\lambda( \widehat u_R ) \le 1.
	\end{equation}
	Using \eqref{major_pi_w_R}, \eqref{control_pi_lambda} and  $\|u_R\|_{L^p(\G)}^p=\frac{c_\lambda(\G,e)}{\kappa}
	\leq \frac{s_{\lambda}+\eps}{\kappa}$, we obtain 
	\begin{equation}
	\label{small_deriv_norm_u_R}
		\| u_R'\|_{L^2(A_R)}^2 
		\le 	\left(1-\left(\frac{s_{\lambda}}{s_{\lambda}+\eps}\right)^{\frac{p-2}{p}}\right) \frac95 \|u_R\|_{L^p(\G)}^p
		\leq \frac95\left(1-\left(\frac{s_{\lambda}}{s_{\lambda}+\eps}\right)^{\frac{p-2}{p}}\right) \frac{s_{\lambda}+\eps}{\kappa}.
	\end{equation}
	By \eqref{Q1}, \eqref{strongPS}, \eqref{control_pi_lambda}, \eqref{small_deriv_norm_u_R}
	and the fact that $2\ell_R \ge \ell_0$ for every $n$, we obtain 
\[
\| \pi_\lambda(\widehat u_R) \widehat u_R'\|_{L^2(-\ell_0/2, \ell_0/2)} \le \left[\frac45
\left(1-\left(\frac{s_{\lambda}}{s_{\lambda}+\eps}\right)^{\frac{p-2}{p}}\right) \frac{s_{\lambda}+\eps}{\kappa}\right]^{1/2}
\leq \frac{\| \phi_{\lambda}' \|_{L^2(-\ell_0/2, \ell_0/2)}}{3}
\]
which concludes the proof of \eqref{interno}.
\medbreak

The proof that $u_R>0$ in $\G$ follows by a strong maximum principle as in \cite[Proposition 3.3]{AST1} knowing that $u_R\geq 0$ on $\G$.
\end{proof}

\section{Proof of Theorem \ref{main}}
\label{sec:proof2}

This section is devoted to the proof of the main result of the paper.
In fact, alternatives A1 and B1 are straightforward, whereas A2 will follow as a direct application of the results proved in Section \ref{sec:doppiovinc}.
Conversely, case B2 is the most involved and will occupy the largest part of this section. 

For the sake of completeness, each case A1--B2 is presented here independently from the others.

\subsection{Case A1: $c_\lambda=\sigma_{\lambda}$, attained}
\label{subsec:A1}
As already pointed out in the Introduction, this is the easiest case and there is essentially nothing to say.
It is the case where $c_\lambda(\G)$ is attained by an action ground state, which is of course also a least action solution.
Straightforward examples for this are compact graphs, where ground states exist for every value of $\lambda$ and $p$
(see for example \cite{D}), but many graphs realizing alternative A1 can be identified also in the noncompact setting, as e.g. those presented in \cite[Section 3]{AST2}.

\subsection{Case A2: $c_\lambda=\sigma_{\lambda}$, not attained}
\label{subsec:A2}

The proof of this alternative is one of the main results of the paper and it relies heavily on Theorems \ref{exlarge}--\ref{usol} above. To exhibit a graph where A2 occurs we will use the following construction.

On a real line we insert, for each integer $k \ge 1$, a node $\vv_k$ at the point of coordinate $k$. At each $\vv_k$ we attach a self-loop $\LL_k$ of length $k$, by identifying $\vv_k$ with the only vertex of $\LL_k$. We obtain in this way the graph depicted in Figure \ref{bigcircles}. 
%Note that it is made of a half-line, of edges of length $1$ and of self-loops of length $k$, for every integer $k\ge 1$. 

\begin{figure}[ht]
	\centering
	\begin{tikzpicture}[xscale=1.2,yscale=0.7]
	\node at (-1, 0) [infinito]  (0) {$\scriptstyle\infty$};
	\node at (8, 0) [infinito]  (100) {};
	\foreach \i in {1, ..., 6}
	{
		\node at (1.3*\i - 1, 0) [nodo] (1) {};
		\node at (1.3*\i - .95, -.1) [below] {$\scriptstyle \vv_\i$};
		\draw(1.3*\i - 1, 0.5*\i) ellipse (0.2 and 0.5*\i);
		\node at (1.3*\i-.6, \i) [below] {$\scriptstyle\LL_{\i}$};
	}
	\foreach \i in {1, ..., 7}
	%\node at (\i - 0.5, 0) [below] {$e_{\i}$};
	%\node at (-1, 0) [below] {$e_0$};
	\node at (8.3, 0) [infinito] {$\cdots$};
	\node at (7.6, 3) {$\cdots$};
	
	\draw [-] (0) -- (1);
	\draw [-] (100) -- (1);
	\end{tikzpicture}
	\caption{The graph $\G$ of Theorem \ref{Gc}}
	\label{bigcircles}
\end{figure}

\begin{theorem}
	\label{Gc}
	Let $\G$ be the graph in Figure \ref{bigcircles}. For every $\lambda>0$, 
	\begin{equation*}
	c_{\lambda}(\G) = \sigma_{\lambda}(\G) = s_{\lambda}
	\end{equation*}
	and neither $c_{\lambda}(\G)$ nor $\sigma_{\lambda}(\G)$ is attained.
\end{theorem}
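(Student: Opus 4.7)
My plan is to prove the three claims of the theorem separately: non--attainment of $c_\lambda(\G) = s_\lambda$ will follow from Theorem \ref{notatt}, while the identification $\sigma_\lambda(\G) = s_\lambda$ will be obtained by exhibiting a sequence of solutions produced by Theorem \ref{usol} applied to the loops $\LL_k$; non--attainment of $\sigma_\lambda(\G)$ will then be immediate.

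First I would verify that $\G \in {\bf G}$ and satisfies assumption (H). The graph is connected, has two unbounded spine edges, every vertex $\vv_k$ has degree four (two from the spine, two from the self--loop), and $\inf_{e \in {\mathbb E}} |e| = 1$ since the shortest edges are the unit spine segments. For (H), given any $x_0 \in \G$ one constructs two injective arclength--parameterized rays by sending them towards $\pm \infty$ along the spine (first traversing the relevant loop if $x_0 \in \LL_k$); their images meet at most in a countable set of points. Since $\G$ contains infinitely many self--loops of distinct lengths, it is neither isometric to $\R$ nor to any of the finite exceptional graphs listed in Example 2.4 of \cite{AST1}. Theorem \ref{notatt} therefore yields $c_\lambda(\G) = s_\lambda$ and asserts that this level is not attained.

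Next I would establish $\sigma_\lambda(\G) = s_\lambda$. The inequality $\sigma_\lambda(\G) \ge c_\lambda(\G) = s_\lambda$ is automatic from \eqref{csigma}. For the reverse inequality, I would apply Theorem \ref{usol} with $\ell_0 = 1$ and $e = \LL_k$: there exists a threshold $\widetilde R = \widetilde R(\lambda, p, 1)$ such that for every integer $k \ge \widetilde R$ the doubly--constrained problem $c_\lambda(\G, \LL_k)$ admits a positive minimizer $u_k \in \Sf_\lambda(\G)$. By Remark \ref{level}, $\limsup_{k \to \infty} c_\lambda(\G, \LL_k) \le s_\lambda$, and since $u_k \in \NL(\G)$ one has $J_\lambda(u_k) = c_\lambda(\G, \LL_k) \ge c_\lambda(\G) = s_\lambda$. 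Hence $J_\lambda(u_k) \to s_\lambda$, and because each $u_k$ is a genuine solution, this gives $\sigma_\lambda(\G) \le s_\lambda$.

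Finally, non--attainment of $\sigma_\lambda(\G)$ is immediate: any $u^* \in \Sf_\lambda(\G)$ with $J_\lambda(u^*) = \sigma_\lambda(\G) = s_\lambda$ would lie in $\NL(\G)$ and therefore realize $c_\lambda(\G) = s_\lambda$, contradicting the non--attainment already established. I do not anticipate any serious obstacle in carrying out this plan, since all the required machinery has been developed in Section \ref{sec:prel} and Section \ref{sec:doppiovinc}; the only point deserving explicit verification is that the self--loops $\LL_k$ indeed qualify as bounded edges to which Theorem \ref{usol} applies, which is clear from their construction.
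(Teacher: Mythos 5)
Your proposal is correct and follows essentially the same route as the paper: assumption (H) plus Theorem \ref{notatt} give $c_\lambda(\G)=s_\lambda$ and its non--attainment, while minimizers of the doubly--constrained problems $c_\lambda(\G,\LL_k)$ for large $k$ (Theorems \ref{exlarge} and \ref{usol} together -- the former yields existence of the minimizer, the latter that it solves \eqref{NLS}) combined with Remark \ref{level} give $\sigma_\lambda(\G)\le s_\lambda$, exactly as in the paper. The only cosmetic slips are attributing the existence of the minimizer to Theorem \ref{usol} alone and describing the spine as having two unbounded edges (it has one half-line and infinitely many unit segments), neither of which affects the argument.
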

\begin{proof}
	On the one hand, $\G\in {\bf G}$ satisfies assumption (H) by construction, so that $c_\lambda(\G)=s_\lambda$ and $c_\lambda(\G)$ is not attained by Theorem \ref{notatt}. On the other hand, Theorems \ref{exlarge}--\ref{usol} ensure that, for sufficiently large $k$, there exists $u_k\in\mathcal{S}_\lambda(\G)$ such that $J_{\lambda}(u_k)=c_\lambda(\G,\mathcal{L}_k)$. Hence, by Remark \ref{level},
	$$
	\sigma_{\lambda}(\G)\leq\liminf_{k\to\infty}J_{\lambda}(u_k)\leq s_\lambda,
	$$ 
	in turn implying $\sigma_{\lambda}(\G)=c_\lambda(\G)$ and concluding the proof.
\end{proof}

\subsection{Case B1: $c_\lambda(\G)<\sigma_\lambda(\G)$, $\sigma_{\lambda}(\G)$ attained}
\label{subsec:B1}
As anticipated in the Introduction, in view of the existing literature it is easy to produce graphs realizing alternative B1.
Indeed, it is for instance enough to let $\G$ be a star graph, i.e. a graph made of a finite number $N\geq3$ of half-lines, glued together at their common origin.
Star graphs have been widely investigated, as they provide the simplest example of noncompact graphs with half-lines.
On the one hand, since star graphs satisfy assumption (H), it is immediate to see that $c_\lambda(\G)=s_\lambda$ and it is not attained by Theorem \ref{notatt}.
On the other hand, one can exploit the simple structure of these graphs to characterize explicitly the set of all solutions $\mathcal{S}_\lambda(\G)$ (see e.g. \cite{ACFN1})
\begin{itemize}
			\item if $N$ is odd, \eqref{NLS} only admits two nonzero solutions $\pm u$,
			 the positive one given by a copy of the restriction of $\phi_\lambda$ to $\R^+$ on each half-line of the graph.
			
			\item if $N$ is even, the set of non-zero solutions
			of \eqref{NLS} is given by
			\begin{equation*}
				\{ \pm u_{I, a} \mid I \subset \{1, \cdots, N\}, \#I = N/2, a \in \R^+ \},
			\end{equation*}
			where 
			\begin{equation*}
				u_{I, a}(x)  = \begin{cases}
					\phi_{\lambda}(x+a)	&\text{if } x \in \HH_i, \text{ for some } i \in I,\\
					\phi_{\lambda}(x-a)	&\text{otherwise,}
				\end{cases}
			\end{equation*}
			with $\HH_i$ being the $i$--th half-line of the graph.
		\end{itemize}
Hence, we easily deduce that  $\sigma_{\lambda}(\G)=\f N2s_\lambda>s_\lambda$ and it is attained for instance by a function $u\in\mathcal{S}_\lambda(\G)$
whose restriction to each half-line of the graph coincides with the restriction of the soliton $\phi_\lambda$ to $\R^+$. 

\subsection{Case B2: $c_\lambda(\G)<\sigma_\lambda(\G)$, neither attained}
\label{subsec:B2}

The discussion of alternative B2 requires a deeper analysis with respect to the other cases. To prove that this alternative actually occurs,
we will consider the one--parameter family of noncompact graphs constructed as follows.

\begin{figure}[ht]
\centering
\subfloat[][]{
\begin{tikzpicture}[xscale= 0.5,yscale=0.5]
\draw (3,-4) -- (2,-6.5);
\draw (3,-4) -- (4,-6.5);
%\draw (3,-4) -- (3,-2);
%\node at (3,-2) [nodo] (00) {};
\node at (3,-4) [nodo] (00) {};
\node at (1.9,-6.7) [infinito] {$\scriptstyle\infty$};
\node at (4.1,-6.7) [infinito] {$\scriptstyle\infty$};
\node at (4.2,-5) [infinito] {$\RR$};
\node at (1.7,-4) [infinito] {$\phantom\RR$};

\end{tikzpicture}}\hskip 3 cm
\subfloat[][]{
\begin{tikzpicture}[xscale= 0.5,yscale=0.5]

\draw (0,-2) ellipse (1 and 2);
\node at (2,-1) [infinito] {$\LL$};
\node at (-1.7,-1) [infinito] {$\phantom\LL$};
\node at (0,-4) [nodo] (S) {};

\end{tikzpicture}}\hskip 3 cm
\subfloat[][]{
\begin{tikzpicture}[xscale= 0.5,yscale=0.5]
\draw (2,-1) circle (1); 
\node at (2,-2) [nodo] (S) {};
\node at (2,0) [nodo] (N) {};
\draw (2,-1) ellipse (.8 and 1);
\draw (2,-1) ellipse (.6 and 1);
\draw (2,-1) ellipse (.4 and 1);
\node at (3.5,0) [infinito] {$\BB$};
\node at (.8,0) [infinito] {$\phantom\BB$};
\end{tikzpicture}}

\caption{The building blocks of the graph $\G_N$. Two half-lines emanating from a vertex  (a); a self-loop of length $N$ (b); $N$ edges of length $1$ connecting two vetices (c).}
\label{blocks}
\end{figure}
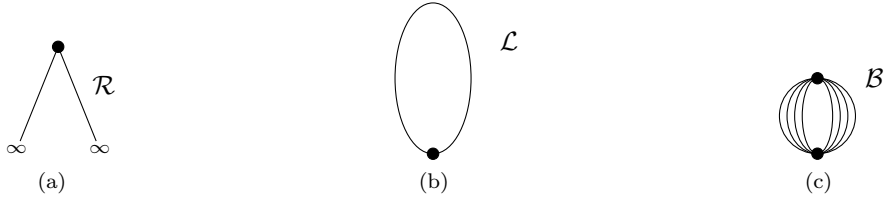

Let $N\in\N$ be fixed. Keeping in mind Figure \ref{blocks}, we consider a straight line on which we insert, for every $k\in \Z$, a vertex $\vv_k$ at the point of coordinate $k$. At each $\vv_k$ we attach a copy of the graph $\RR$, called $\RR_k$, by identifying $\vv_k$ with the vertex of $\RR_k$. Next, denoting by $\LL$ the self-loop of length $N$ in Figure \ref{blocks}(b),  we attach at each $\vv_k$ except $\vv_0$ a copy of the graph $\LL$, called $\LL_k$, by identifying $\vv_k$ with the only vertex of $\LL_k$. Finally, we attach at $\vv_0$ the graph $\BB$ with $N$ edges of length 1 in Figure \ref{blocks}(c), by identifying one of its two vertices with $\vv_0$. We call $\G_N$ the resulting graph, shown in Figure \ref{grafo1}.

\begin{figure}[ht]

\centering
\begin{tikzpicture}[xscale= 0.5,yscale=0.5]

\draw (1,-2) -- (23,-2);
\draw [dotted,thick] (-1,-2) -- (1,-2);
\draw [dotted,thick] (23,-2) -- (25,-2);

\foreach \x in {1,...,3}
\draw (\x*3,0) ellipse (1 and 2);
\foreach \x in {5,...,7}
\draw (\x*3,0) ellipse (1 and 2);

%\foreach \x in {1,...,7}
%\draw (\x*3,-2) -- (\x*3,-4);
\foreach \x in {1,...,7}
\node at (\x*3,-2) [nodo] {};
%\foreach \x in {1,...,7}
%\node at (\x*3,-4) [nodo] {};

\foreach \x in {1,...,7}
\draw (\x*3,-2) -- (\x*3-1,-6.5);
\foreach \x in {1,...,7}
\draw (\x*3,-2) -- (\x*3+1,-6.5);

\foreach \x in {1,...,7}
\node at (\x*3-1.1,-6.8) [infinito] {$\scriptstyle\infty$};
\foreach \x in {1,...,7}
\node at (\x*3+1.1,-6.8) [infinito] {$\scriptstyle\infty$};

\draw (12,-1) circle (1); 

\node at (12,0) [nodo] (N) {};
\draw (12,-1) ellipse (.8 and 1);
\draw (12,-1) ellipse (.6 and 1);
\draw (12,-1) ellipse (.4 and 1);

\node at (12.7,-2.4) [infinito] {$\scriptstyle\vv_0$};
\node at (15.7,-2.4) [infinito] {$\scriptstyle\vv_1$};
\node at (18.7,-2.4) [infinito] {$\scriptstyle\vv_2$};
\node at (21.7,-2.4) [infinito] {$\scriptstyle\vv_3$};

\node at (9.9,-2.5) [infinito] {$\scriptstyle\vv_{-1}$};
\node at (6.9,-2.5) [infinito] {$\scriptstyle\vv_{-2}$};
\node at (3.9,-2.5) [infinito] {$\scriptstyle\vv_{-3}$};

\node at (16,2) [infinito] {$\scriptstyle\LL_1$};
\node at (19,2) [infinito] {$\scriptstyle\LL_2$};
\node at (22,2) [infinito] {$\scriptstyle\LL_3$};

\node at (10.2,2) [infinito] {$\scriptstyle\LL_{-1}$};
\node at (7.2,2) [infinito] {$\scriptstyle\LL_{-2}$};
\node at (4.2,2) [infinito] {$\scriptstyle\LL_{-3}$};

\node at (13,0) [infinito] {$\scriptstyle\BB$};

\node at (4.3,-4.5) [infinito] {$\scriptstyle\RR_{-3}$};
\node at (7.3,-4.5) [infinito] {$\scriptstyle\RR_{-2}$};
\node at (10.3,-4.5) [infinito] {$\scriptstyle\RR_{-1}$};
\node at (13.1,-4.5) [infinito] {$\scriptstyle\RR_0$};
\node at (16.1,-4.5) [infinito] {$\scriptstyle\RR_1$};
\node at (19.1,-4.5) [infinito] {$\scriptstyle\RR_2$};
\node at (22.1,-4.5) [infinito] {$\scriptstyle\RR_3$};

\end{tikzpicture}
\caption{The graph $\G_N$.}
\label{grafo1}
\end{figure}

A second graph we will use below, that plays the role of ``limit graph'' with respect to $\G_N$, is depicted in Figure \ref{grafo2}. It is exactly equal to $\G_N$ except that the subgraph $\BB$ is replaced by a loop $\LL_0$ identical to all other loops.
Note that this graph is $\Z$--periodic. We call it $\widetilde\G_N$, and we label all its vertices, edges and subgraphs with the same letters as for those of $\G_N$, superposed by a tilde.

\begin{figure}[h]

\centering

\begin{tikzpicture}[xscale= 0.5,yscale=0.5]

\draw (1,-2) -- (23,-2);
\draw [dotted,thick] (-1,-2) -- (1,-2);
\draw [dotted,thick] (23,-2) -- (25,-2);

\foreach \x in {1,...,7}
\draw (\x*3,0) ellipse (1 and 2);
%\foreach \x in {1,...,7}
%\draw (\x*3,-2) -- (\x*3,-4);
\foreach \x in {1,...,7}
\node at (\x*3,-2) [nodo] {};
%\foreach \x in {1,...,7}
%\node at (\x*3,-4) [nodo] {};

\foreach \x in {1,...,7}
\draw (\x*3,-2) -- (\x*3-1,-6.5);
\foreach \x in {1,...,7}
\draw (\x*3,-2) -- (\x*3+1,-6.5);

\foreach \x in {1,...,7}
\node at (\x*3-1.1,-6.8) [infinito] {$\scriptstyle\infty$};
\foreach \x in {1,...,7}
\node at (\x*3+1.1,-6.8) [infinito] {$\scriptstyle\infty$};

\node at (12.5,-2.5) [infinito] {$\scriptstyle\widetilde\vv_0$};
\node at (15.5,-2.5) [infinito] {$\scriptstyle\widetilde\vv_1$};
\node at (18.5,-2.5) [infinito] {$\scriptstyle\widetilde\vv_2$};
\node at (21.5,-2.5) [infinito] {$\scriptstyle\widetilde\vv_3$};

\node at (9.7,-2.5) [infinito] {$\scriptstyle\widetilde\vv_{-1}$};
\node at (6.7,-2.5) [infinito] {$\scriptstyle\widetilde\vv_{-2}$};
\node at (3.7,-2.5) [infinito] {$\scriptstyle\widetilde\vv_{-3}$};

\node at (16,2) [infinito] {$\scriptstyle\widetilde\LL_1$};
\node at (19,2) [infinito] {$\scriptstyle\widetilde\LL_2$};
\node at (22,2) [infinito] {$\scriptstyle\widetilde\LL_3$};

\node at (10.2,2) [infinito] {$\scriptstyle\widetilde\LL_{-1}$};
\node at (7.2,2) [infinito] {$\scriptstyle\widetilde\LL_{-2}$};
\node at (4.2,2) [infinito] {$\scriptstyle\widetilde\LL_{-3}$};
\node at (13,2) [infinito] {$\scriptstyle\widetilde\LL_0$};

\node at (4.3,-4.5) [infinito] {$\scriptstyle\widetilde\RR_{-3}$};
\node at (7.3,-4.5) [infinito] {$\scriptstyle\widetilde\RR_{-2}$};
\node at (10.3,-4.5) [infinito] {$\scriptstyle\widetilde\RR_{-1}$};
\node at (13.1,-4.5) [infinito] {$\scriptstyle\widetilde\RR_0$};
\node at (16.1,-4.5) [infinito] {$\scriptstyle\widetilde\RR_1$};
\node at (19.1,-4.5) [infinito] {$\scriptstyle\widetilde\RR_2$};
\node at (22.1,-4.5) [infinito] {$\scriptstyle\widetilde\RR_3$};

\end{tikzpicture}

\caption{The graph $\widetilde\G_N$.}
\label{grafo2}	
\end{figure}

Note that the graph $\G_N$ is made of edges of length $1$ (the horizontal edges and the $N$ edges of $\BB$), of self-loops of length $N$  and of half-lines. Notice also, as it will be important in the sequel, that the total length of $\BB$ equals the length of the loop $\LL$.

Exploiting the dependence of $\G_N$ on the parameter $N$ we have the following result, which proves the actual occurrence of case B2.

\begin{theorem}
\label{prop:largeN}
For every $N\in\N$, let $\G_N$ be the graph in Figure \ref{grafo1}. There exists $\overline{N}\in\N$ such that for every $N\geq\overline{N}$,
\[
s_\lambda=c_\lambda(\G_N)<\sigma_\lambda(\G_N)
\]
and neither $c_\lambda(\G_N)$ nor $\sigma_\lambda(\G_N)$ is attained.
\end{theorem}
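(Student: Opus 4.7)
I would split the proof into four steps, one per assertion in the statement.

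\emph{Step 1: $c_\lambda(\G_N)=s_\lambda$, not attained.} First I would check that $\G_N\in{\bf G}$ and satisfies assumption (H): from every point of $\G_N$, two disjoint injective curves to infinity can be exhibited by travelling along the horizontal chain in opposite directions, possibly accessing infinity through the half-lines $\RR_k$. For $N$ large, $\G_N$ is manifestly not one of the exceptional graphs of \cite{AST1}, so Theorem \ref{notatt} directly yields $c_\lambda(\G_N)=s_\lambda$ with non-attainment. The same conclusion applies verbatim to the limit graph $\widetilde\G_N$.

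\emph{Step 2: upper bound on $\sigma_\lambda(\G_N)$.} Take $N$ exceeding the threshold $\widetilde R$ of Theorem \ref{usol} with $\ell_0=1$ (the minimal edge length of $\G_N$). Every loop $\LL_k$ then has length $N\geq\widetilde R$, and Theorem \ref{usol} produces a positive solution $u_k\in\Sf_\lambda(\G_N)$ realizing $c_\lambda(\G_N,\LL_k)$, with maximum strictly inside $\LL_k$. Remark \ref{level} gives $\sigma_\lambda(\G_N)\leq J_\lambda(u_k)=c_\lambda(\G_N,\LL_k)\leq s_\lambda+\varepsilon_N$ with $\varepsilon_N\to 0$ as $N\to\infty$, so the upper bound is as tight as desired.

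\emph{Step 3: strict inequality $\sigma_\lambda(\G_N)>s_\lambda$.} This is the heart of the proof. I would argue by contradiction: suppose $(v_n)\subset\Sf_\lambda(\G_N)$ satisfies $J_\lambda(v_n)\to s_\lambda$. Proposition \ref{boundedness} gives an $H^1$-bounded sequence with $\inf_n\|v_n\|_\infty>0$; extract $v_n\rightharpoonup v$. If $v\not\equiv 0$, then $v$ solves \eqref{NLS}; combining lower semicontinuity with $v\in\NN_\lambda(\G_N)$ and $J_\lambda(v)\geq c_\lambda(\G_N)=s_\lambda$ forces $v$ to attain $c_\lambda(\G_N)$, contradicting Step 1. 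Hence $v\equiv 0$ and the peaks of $v_n$ escape to infinity. Horizontal escape is ruled out by translating by $-k_n$ steps and applying the same dichotomy to the limit: the translated sequence converges locally to a solution on $\widetilde\G_N$ which, if nonzero, would attain $c_\lambda(\widetilde\G_N)=s_\lambda$, contradicting the analogue of Step 1 on $\widetilde\G_N$. The delicate case is escape along a half-line, where the naive limit would be a free soliton on $\R$ at the level $s_\lambda$; the plan is to rule this out via the Kirchhoff condition at the attaching vertex $\vv$, combined with a Brezis--Lieb decomposition of $v_n$ into a soliton-like piece on the half-line and a residual on $\G_N$ minus the half-line. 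The goal is to show that the residual part cannot simultaneously satisfy the nonlinear equation with the decaying boundary values imposed at $\vv$ and have its contribution to $J_\lambda(v_n)$ vanish in the limit, producing the desired contradiction. I expect this half-line exclusion to be the main technical obstacle of the proof.

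\emph{Step 4: non-attainment of $\sigma_\lambda(\G_N)$.} Suppose $u^*\in\Sf_\lambda(\G_N)$ attained $\sigma_\lambda(\G_N)$. Its peak must lie on a bounded subgraph. If the peak sits on $\BB$, the $N$-fold symmetry of $\BB$ under edge permutations forces, via Proposition \ref{sK2} applied with $K=N$, the bound $J_\lambda(u^*)\geq \tfrac{N}{2}s_\lambda$, incompatible with $\sigma_\lambda(\G_N)\leq s_\lambda+\varepsilon_N$ when $N$ is large. Hence the peak lies on some loop $\LL_{k^*}$, giving $J_\lambda(u^*)\geq c_\lambda(\G_N,\LL_{k^*})$. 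A comparison argument with the translation-invariant asymptotic value $c_\lambda(\widetilde\G_N,\widetilde\LL_0)$, together with a Brezis--Lieb splitting, shows that $c_\lambda(\G_N,\LL_k)$ is strictly greater than this limit value for every $k$, the strict increase being caused by the asymmetry introduced by $\BB$ at $\vv_0$; passing to the limit $|k|\to\infty$ along the family from Step 2 then contradicts the minimality of $u^*$, concluding the proof.
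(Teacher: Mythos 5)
Your overall architecture --- classify solutions by where they attain their maximum, compare $\G_N$ with the periodic graph $\widetilde\G_N$, and exhibit $\sigma_\lambda(\G_N)$ as an unattained limit of the levels $c_\lambda(\G_N,\LL_k)$ --- is essentially the one the paper follows, and Steps 1--2 are correct. However, several load-bearing claims are either left unproven or are wrong as stated. First, the half-line exclusion in Step 3, which you yourself flag as ``the main technical obstacle,'' is precisely where the work lies, and it also underpins the reduction ``the peak must lie on a bounded subgraph'' in Step 4. The missing idea is concrete: a solution peaking far out on a half-line of $\RR_k$ equals $\phi_\lambda(\cdot-a)$ there, and by continuity at $\vv_k$ either the sibling half-line carries a second translated bump, in which case $\#u^{-1}(t)\ge 3$ for a.e.\ $t$ and Proposition \ref{sK2} gives $J_{\lambda}(u)\ge\frac32 s_\lambda$, or the two half-lines carry one full soliton whose derivatives cancel in the Kirchhoff condition, so the restriction of $u$ to $\G_N\setminus\RR_k$ is a nonzero element of $\NN_\lambda(\G_N\setminus\RR_k)$ and, by \eqref{pbelow}, contributes at least a fixed $\kappa C>0$ to the action (this is Lemma \ref{lem1} of the paper). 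Second, Step 4 omits entirely the solutions peaking on a horizontal edge of length $1$: these lie neither in $\BB$ nor in a loop, and excluding them requires a separate, nontrivial argument (the paper symmetrizes onto an H-shaped graph and analyzes the doubly constrained problem there, Lemma \ref{lem2}).

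Third, your bound for peaks on $\BB$ is not justified: Proposition \ref{sK2} with $K=N$ requires $\#u^{-1}(t)\ge N$ for a.e.\ $t$ in the \emph{whole} range $(0,\sup u)$, which fails both because a solution need not be invariant under permutations of the edges of $\BB$ and because small values of $u$ are attained only a few times on the rest of the graph. Finally, and most importantly, the strict inequality that forces non-attainment of $\sigma_\lambda(\G_N)$ cannot come from a Brezis--Lieb splitting plus ``asymmetry'': Brezis--Lieb yields only non-strict inequalities. The paper's mechanism is a strict rearrangement argument: a solution with action below $2s_\lambda$ can be constant on at most a bounded number of edges of $\BB$, hence at least three loops inside $\BB$ carry nonconstant profiles, so $\#\{x\in \BB\mid u(x)=t\}\ge 3$ on a set of $t$ of positive measure, and symmetrizing $\BB$ onto the single loop $\widetilde\LL_0$ of $\widetilde\G_N$ \emph{strictly} decreases the kinetic energy (Lemmas \ref{lemmaremark} and \ref{exclaim}). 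Combined with the fact that $\sigma_\lambda(\widetilde\G_N)$ \emph{is} attained (by periodicity, Lemma \ref{lem3}) and therefore exceeds the unattained level $c_\lambda(\widetilde\G_N)=s_\lambda$, this yields both $\sigma_\lambda(\G_N)=\sigma_\lambda(\widetilde\G_N)>s_\lambda$ and the non-attainment; without it your Steps 3 and 4 do not close.
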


Preliminary to the proof of Theorem \ref{prop:largeN}, we introduce some notation. Since $\lambda$ is fixed we omit to write it in quantities that depend on it, except for the levels $s_\lambda, c_\lambda(\G_N)$ and $\sigma_\lambda(\G_N)$ (and the same for quantities relative to $\widetilde\G_N$). Next, in order to have lighter notation, we omit to write the dependence from $N$ in various quantities, such as the loops $\LL_k$, and we keep it only in the names of the graphs. It is understood anyway that $N$ is a parameter that we will tune in various proofs.

In what follows, we split the set of solutions to problem \eqref{NLS},  as
\[
\Sf(\G_N) =\Sf_1(\G_N)\cup\Sf_2(\G_N)\cup\Sf_3(\G_N),
\]
where
\[
\begin{split}
&\Sf_1(\G_N):=\left\{u\in\Sf(\G_N)\,\mid \,\|u\|_{L^\infty(\G_N)}=\|u\|_{L^\infty(\mathcal{R}_k)},\,\text{for some }k\in\Z\right\}, \\ 
&\Sf_2(\G_N):=\left\{u\in\Sf(\G_N)\,\mid\,\|u\|_{L^\infty(\G_N)}=\|u\|_{L^\infty(e)},\,\text{for some edge }e \in\G_N\text{ of length $1$}\right\}, \\
&\Sf_3(\G_N):=\left\{u\in\Sf(\G_N)\,\mid\,\|u\|_{L^\infty(\G_N)}=\|u\|_{L^\infty(\LL_k)},\,\text{for some self-loop }\LL_k  \in\G_N \text{ of length $N$}\right\}.
\end{split}
\]
We define analogously the sets $\Sf_1(\widetilde{\G}_N)$, $\Sf_2(\widetilde{\G}_N)$, $\Sf_3(\widetilde{\G}_N)$ so that we also have
\begin{equation}
\label{tildesplit}
\Sf(\widetilde{\G}_N)=\Sf_1(\widetilde{\G}_N)\cup\Sf_2(\widetilde{\G}_N)\cup\Sf_3(\widetilde{\G}_N)\,.
\end{equation}

\begin{remark}
\label{rem:assH}
By construction, both $\G_N$ and $\widetilde{\G}_N$ satisfy assumption (H), so we have immediately that for every $N\in\N$,
\[
c_\lambda(\G_N)=c_\lambda(\widetilde{\G}_N)=s_\lambda
\]
and neither infimum is attained as a consequence of Theorem \ref{notatt}.
\end{remark}

\begin{remark}
\label{rem:pos}
Observe that, if $\G\in{\bf G}$ satisfies assumption (H) and $u$ is a sign-changing solution, then $u^+\in \mathcal{N}_{\lambda}(\G)$ and $u^-\in \mathcal{N}_{\lambda}(\G) $ and, by Theorem  \ref{notatt},
$$
J_{\lambda}(u)=J_{\lambda}(u^+)+ J_{\lambda}(u^-)\geq  2\inf_{u \in \NL(\G) } J_{\lambda}(u)=2s_{\lambda}.
$$
\end{remark}

The proof of Theorem \ref{prop:largeN} will rely on the next series of lemmas.
As $J_{\lambda}(u)=J_{\lambda}(|u|)$, we will assume without loss of generality in the rest of this section that all the functions $u \in \NN_\lambda(\G_N)$ that we will consider will be non-negative.

\begin{lemma}
\label{lemmaremark}
Let $e \in \G_N$, $N\ge 2$, be any bounded edge. Identify $\G_N\setminus \BB$ with $\Gt \setminus \widetilde\LL_0$ and let $\widetilde e \in \Gt$ be
\[
\widetilde e = \begin{cases} e & \text{ if } e \notin \BB \\ 
\widetilde\LL_0 & \text{ if } e \in \BB. \end{cases}
\]
Then for every $u \in \NN_\lambda(\G_N) \cap X_e$, there exists $\widetilde v \in   \NN_\lambda(\Gt) \cap X_{\widetilde e}$ such that
\begin{equation}
\label{ineq1}
J_{\lambda}(\widetilde v) \le J_{\lambda}(u).
\end{equation}
Moreover, if for $t$  in a set of  positive measure, 
\begin{equation}
\label{piu3}
\#\{x\in \BB\mid u(x)=t\} \ge 3,
\end{equation}
then the inequality \eqref{ineq1} is strict.
\end{lemma}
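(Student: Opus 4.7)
The plan is to build a comparison function $\tilde v_0\in H^1(\Gt)$ that coincides with $u$ on $\Gt\setminus\widetilde\LL_0=\G_N\setminus\BB$ and that, on the loop $\widetilde\LL_0$, is an appropriate rearrangement of $u|_\BB$ chosen so that (i) $\tilde v_0$ is equimeasurable with $u$ on the modified part (hence $\|\tilde v_0\|_q=\|u\|_q$ for every $q\in[1,+\infty]$) and (ii) $\|\tilde v_0'\|_{L^2(\Gt)}\le\|u'\|_{L^2(\G_N)}$. The required $\widetilde v$ will then be the Nehari projection $\widetilde v:=\pi_\lambda(\tilde v_0)\,\tilde v_0$: since $u\in\NL(\G_N)$, (i) and (ii) force $\pi_\lambda(\tilde v_0)\le 1$, which yields
\begin{equation*}
J_\lambda(\widetilde v)=\kappa\,\pi_\lambda(\tilde v_0)^p\,\|\tilde v_0\|_p^p\le\kappa\,\|u\|_p^p=J_\lambda(u),
\end{equation*}
and the property $\tilde v_0\in X_{\widetilde e}$ is inherited by $\widetilde v$ because rescaling preserves the subset of $\Gt$ where the $L^\infty$-norm is attained.

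For the construction, set $f:=u|_\BB$, $a:=u(\vv_0)$, $b:=u(\ww_0)$ (where $\ww_0$ denotes the second vertex of $\BB$), $M:=\max_\BB f$, $m:=\min_\BB f$, and split $\BB$ into the sublevel subgraphs $\BB^+:=\{f\ge a\}$ and $\BB^-:=\{f\le a\}$, of respective total lengths $\alpha$ and $N-\alpha$. Realize $\widetilde\LL_0$ as the union of two arcs $\widetilde\LL_0^+$ and $\widetilde\LL_0^-$ of these same lengths, meeting at $\widetilde\vv_0$ and at a second common point $q$. On $\widetilde\LL_0^+$, parametrized as $[0,\alpha]$, define
\begin{equation*}
\tilde v_0(x):=(f|_{\BB^+})^*\bigl(2\min(x,\alpha-x)\bigr),
\end{equation*}
so that $\tilde v_0$ is symmetric about the midpoint $\alpha/2$, nonincreasing from there toward the endpoints, and takes the values $M$ at the midpoint and $a$ at $0,\alpha$. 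Perform the analogous construction on $\widetilde\LL_0^-$, with reversed monotonicity, producing a function with minimum $m$ at the midpoint of $\widetilde\LL_0^-$ and value $a$ at its endpoints. The resulting $\tilde v_0$ is continuous on $\widetilde\LL_0$ and matches $u(\vv_0)=a$ at $\widetilde\vv_0$, so $\tilde v_0\in H^1(\Gt)$; equimeasurability of each piece with the corresponding half of $\BB$ gives (i); and a direct case check ($e\notin\BB$ and $e\in\BB$) using $\|u\|_\infty=\|u\|_{L^\infty(e)}$ confirms $\tilde v_0\in X_{\widetilde e}$.

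The gradient estimate (ii) reduces to a Pólya–Szegő inequality. A change of variables gives
\begin{equation*}
\|\tilde v_0'\|_{L^2(\widetilde\LL_0^\pm)}^2=4\,\|(f|_{\BB^\pm})^{*\prime}\|_{L^2(0,|\BB^\pm|)}^2,
\end{equation*}
so that the factor $1/K^2=1/4$ provided by Proposition \ref{Ncontr} with $K=2$ exactly absorbs the $4$. The preimage bound $\#f^{-1}(t)\ge 2$ for a.e.~$t\in(a,M)$ is verified edge-by-edge: any edge of $\BB$ on which $f$ exceeds $t$ has endpoint values $a,b$ with $a\le t$, and either $b\le t$ (so by continuity $f$ crosses level $t$ at least twice on that edge) or $b>t$ (then $t\in(a,b)$ and the intermediate value theorem yields a preimage on each of the $N\ge 2$ edges of $\BB$, hence $\ge N\ge 2$ preimages in total). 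The argument below $a$ is entirely symmetric. Summing and using $\tilde v_0\equiv u$ on $\Gt\setminus\widetilde\LL_0$ gives $\|\tilde v_0'\|_{L^2(\Gt)}\le\|u'\|_{L^2(\G_N)}$.

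For the strict inequality, suppose \eqref{piu3} holds on a set $T_3$ of positive measure. Decomposing the full range $T=(m,a)\cup(a,M)$ into $T_3$ and its complement and applying Proposition \ref{Ncontr} with $K=3$ on $T_3$ and $K=2$ on $T\setminus T_3$ upgrades the above bound to a strict one, so that $\|\tilde v_0'\|_{L^2(\Gt)}<\|u'\|_{L^2(\G_N)}$, whence $\pi_\lambda(\tilde v_0)<1$ and $J_\lambda(\widetilde v)<J_\lambda(u)$. The main technical obstacle in the whole argument is the continuity constraint $\tilde v_0(\widetilde\vv_0)=u(\vv_0)$: it prevents a direct use of the standard symmetric rearrangement on the loop, which would place the minimum of $\tilde v_0$ at $\widetilde\vv_0$, and it is precisely the decomposition $\BB=\BB^+\cup\BB^-$ adopted above that allows one to recover a Pólya–Szegő-type inequality compatible with a prescribed boundary value at the junction vertex.
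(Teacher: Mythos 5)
Your proposal is correct in substance and follows the same overall strategy as the paper: keep $u$ unchanged on $\G_N\setminus\BB=\Gt\setminus\widetilde\LL_0$, replace $u|_\BB$ by an equimeasurable rearrangement on $\widetilde\LL_0$ whose Dirichlet norm is controlled via Proposition \ref{Ncontr} (with the factor $4/K^2$ bookkeeping you carry out), and then project onto the Nehari manifold. The one place where you genuinely diverge is the device used to match the value $u(\vv_0)$ at the junction vertex. The paper simply takes the symmetric rearrangement $\widehat u$ of $u|_\BB$ on the full interval $(-N/2,N/2)$, observes that the two endpoint values coincide (both equal $\min_\BB u$) so that $\widehat u$ descends to the circle, and then \emph{rotates} the identification of $\widetilde\LL_0$ with that interval so that a point $p$ with $\widehat u(p)=u(\vv_0)$ lands on the vertex; no splitting is needed, since a loop has no preferred parametrization. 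Your decomposition $\BB=\BB^+\cup\BB^-$ into the super/sub-level sets of $u(\vv_0)$, with separate rearrangements on two arcs pinned to the value $a$ at both junction points, achieves the same goal and is perfectly valid (the preimage counts $\ge 2$ on $(a,M)$ and on $(m,a)$ that you verify edge-by-edge are exactly what Proposition \ref{Ncontr} needs on each piece, and the coarea argument guarantees the strict gain on the set where \eqref{piu3} holds), but it is more elaborate than necessary, and your closing claim that the continuity constraint ``prevents a direct use of the standard symmetric rearrangement on the loop'' is not accurate for the reason just given. One small slip: your explicit formula $\tilde v_0(x)=(f|_{\BB^+})^*\bigl(2\min(x,\alpha-x)\bigr)$ puts the maximum $M$ at the endpoints $x=0,\alpha$ and the value $a$ at the midpoint, contradicting your own (correct) verbal description and breaking continuity at $\widetilde\vv_0$; the intended formula is $(f|_{\BB^+})^*\bigl(2|x-\alpha/2|\bigr)$. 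With that correction the argument goes through.
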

\begin{proof} 
Since $\BB$ contains a loop, 
\[
\#\{x\in \BB\mid u(x)=t\} \ge 2 \quad\text{ for every } t \in \big(\min_\BB u, \max_\BB u \big).
\]
Letting  $\widehat u (x)$ be the symmetric rearrangement of  the restriction of $u$ to $\BB$,   $\widehat u \in H^1(-N/2,N/2)$ and 
\begin{equation}
\label{rearr1}
\|\widehat u\|_{L^q(-N/2,N/2)} = \|u\|_{L^q(\BB)} \quad\forall q \in [1,+\infty], \qquad \|\widehat u'\|_{L^2(-N/2,N/2)} \le \|u'\|_{L^2(\BB)}.
\end{equation}
Furthermore, as $\widehat u (p) = u(\vv_0)$ for some $p$ in $[-N/2,N/2]$, we can view $\widehat u$ as a function on $\widetilde\LL_0$ (after identifying $\widetilde\LL_0$ with $(-N/2,N/2)$ and $p$ with $\vv_0$). We can then define $ v\in H^1(\Gt)$ as
\[
 v (x):=\begin{cases}
u(x) & \text{if }x\in\widetilde{\G}_N\setminus\widetilde \LL_0\ = \G_N \setminus \BB \\
\widehat u(x) & \text{if }x\in \widetilde\LL_0.
\end{cases}
\]
Continuity is guaranteed since $\widehat u(p) =  u(\vv_0)$.
By construction, $\| v '\|_{L^2(\widetilde{\G}_N)} \le \|u'\|_{L^2(\G_N)}$ and $\| v \|_{L^q(\widetilde{\G}_N)} = 
\|u\|_{L^q(\G_N)}$ for every $q \in[1+\infty]$, leading to $\pi_\lambda( v ) \le 1$.
	
Moreover, by construction, if $u$ attains its $L^\infty$ norm on some edge  $e \in \G_N\setminus \BB$, then $ v$ attains its $L^\infty$ norm on the corresponding edge of $\Gt \setminus \widetilde\LL_0$; if, instead,  $u$ attains its $L^\infty$ norm on some edge  of $\BB$, then  $ v$ attains it  on  $\widetilde \LL_0$. This shows that $\widetilde v:=\pi_\lambda( v) v \in\  \NN_\lambda(\Gt) \cap X_{\widetilde e}$ and
\[
J_{\lambda}(\widetilde v) = J_{\lambda}(\pi_\lambda( v) v)  =  \kappa \pi_\lambda( v)^p \|v\|_p^p\leq \kappa \|u\|_p^p=J_{\lambda}(u). 
\]
Finally, in case \eqref{piu3} holds, the inequality in \eqref{rearr1} is strict by Proposition \ref{Ncontr}, resulting in the strict inequality in \eqref{ineq1}.
\end{proof}

The next lemma shows that the action of solutions attaining their maximum on a half-line cannot be too close to $s_\lambda$.

\begin{lemma}
	\label{lem1}
	There exists $\delta_1>0$ such that for every $N\in\N$,
	\[
	\inf_{u\in\Sf_1(\G_N)}J_{\lambda}(u)\geq s_\lambda+\delta_1\qquad\text{and}\qquad\inf_{u\in\Sf_1(\widetilde{\G}_N)}J_{\lambda}(u)\geq s_\lambda+\delta_1\,.
	\]
\end{lemma}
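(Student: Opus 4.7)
The plan is to argue by contradiction. Suppose the conclusion fails, so one can pick sequences $N_n \in \N$ and $u_n \in \Sf_1(\G_{N_n})$ with $J_\lambda(u_n) \to s_\lambda$ (the case $u_n \in \Sf_1(\widetilde\G_{N_n})$ proceeds identically). By Remark \ref{rem:pos} we may assume $u_n \geq 0$, and by Proposition \ref{boundedness} the sequence $(u_n)$ is bounded in $H^1$ with $\|u_n\|_\infty$ uniformly bounded away from $0$ and $\infty$.

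The first step is to classify the shape of $u_n|_{\RR_{k_n}}$: since $u_n \ge 0$ solves the autonomous NLS ODE on each half-line of $\RR_{k_n}$ with zero limit at infinity, it coincides there with a translate $\phi_\lambda(\cdot - a)$ of the soliton. Continuity at $\vv_{k_n}$, evenness of $\phi_\lambda$, and the assumption that $\|u_n\|_\infty$ is attained on $\RR_{k_n}$ reduce the possibilities, after identifying $\RR_{k_n}$ with $\R$ and $\vv_{k_n}$ with $0$, to three configurations:
\textbf{(I)} $u_n|_{\RR_{k_n}} = \phi_\lambda(\cdot - c_n)$, a smooth translated soliton;
\textbf{(II)} $u_n|_{\RR_{k_n}}(y) = \phi_\lambda(|y| - d_n)$ with $d_n > 0$, a symmetric ``double peak''; or
\textbf{(III)} $u_n|_{\RR_{k_n}}(y) = \phi_\lambda(|y| + |a_n|)$ with $|a_n| \ge 0$, attaining its maximum at $\vv_{k_n}$.
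In cases \textbf{(II)} and \textbf{(III)} a direct level-set count gives $\#u_n^{-1}(t)|_{\RR_{k_n}} \geq 2$ on $(0, \|u_n\|_\infty)$; moreover continuity forces $\|u_n|_{\mathrm{rest}}\|_\infty \geq u_n(\vv_{k_n})$, and since $\G_{N_n} \setminus \RR_{k_n}$ still contains infinitely many pairs of half-lines (those of $\RR_j$ for $j \neq k_n$) and therefore still satisfies assumption (H), the restriction $u_n|_{\mathrm{rest}}$ contributes at least two further preimages on the matching range. Combining, $\#u_n^{-1}(t) \geq 4$ for a.e.\ $t \in (0, \|u_n\|_\infty)$, and Proposition \ref{sK2} with $K = 4$ forces $J_\lambda(u_n) \geq 2 s_\lambda$, contradicting $J_\lambda(u_n) \to s_\lambda$.

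Case \textbf{(I)} is the delicate one and constitutes the main obstacle. Here $J_\lambda(u_n|_{\RR_{k_n}}) = s_\lambda$ exactly and the count on $\RR_{k_n}$ is only $2$, so the blunt rearrangement bound is tight. The key observation is that smoothness of $\phi_\lambda(\cdot - c_n)$ across $\vv_{k_n}$ makes the sum of outward derivatives of $u_n|_{\RR_{k_n}}$ at $\vv_{k_n}$ vanish; Kirchhoff on the full graph then requires the same zero-sum condition from $u_n|_{\mathrm{rest}}$, which must in addition meet the boundary value $\epsilon_n := u_n(\vv_{k_n}) = \phi_\lambda(|c_n|)$ and decay on every remaining half-line at infinity. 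Linearizing the NLS equation on $\G_{N_n} \setminus \RR_{k_n}$ in the small-amplitude regime $\epsilon_n \to 0$, the vertex values $\epsilon_{j,n} := u_n(\vv_j)$ satisfy a constant-coefficient recurrence
\[
\epsilon_{j-1,n} + \epsilon_{j+1,n} = 2\,\gamma(\lambda, N_n)\,\epsilon_{j,n}, \qquad j \neq k_n,
\]
with a defect at $j = k_n$ whose coefficient -- computed explicitly via the transmission constants through loops of length $N_n$, edges of length $1$, and (when $k_n = 0$) the bundle $\BB$ -- always differs from the decay rate $r^{-1}$ of the bulk recurrence, uniformly in $N_n$, thanks to the strict inequality $\gamma(\lambda, N) > \cosh\sqrt{\lambda}$. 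Non-resonance rules out any small nontrivial decaying $(\epsilon_{j,n})$, so $\epsilon_n$ must remain bounded below by some $\eta(\lambda, p) > 0$; equivalently $|c_n|$ stays uniformly bounded. On the set $T = [0, \eta]$ one consequently has $\#u_n^{-1}(t) \ge 3$ (two preimages on $\RR_{k_n}$ plus at least one more on the rest, where $u_n > 0$ by the strong maximum principle and $\|u_n|_{\mathrm{rest}}\|_\infty \ge \eta$), and applying Proposition \ref{Ncontr} with $K = 3$ on $T$ combined with the symmetric rearrangement technique used in the proof of Theorem \ref{usol} produces a uniform $\delta_1 > 0$ with $J_\lambda(u_n) \ge s_\lambda + \delta_1$, the final contradiction.
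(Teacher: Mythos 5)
Your case analysis on $\RR_{k_n}$ (full soliton across the vertex; symmetric double peak; maximum at the vertex) is exactly the paper's, and your treatment of cases \textbf{(II)} and \textbf{(III)} via a preimage count and Proposition \ref{sK2} is correct — indeed you get $K=4$ where the paper settles for $K=3$, and either bound suffices. The problem is case \textbf{(I)}, which you rightly identify as the crux but then attack with a linearization of the NLS on $\G_{N_n}\setminus\RR_{k_n}$, a transfer-matrix recurrence for the vertex values, and a ``non-resonance'' claim. As written this is a genuine gap: the recurrence is asserted, not derived; the passage from the linearized recurrence to the actual nonlinear solution in the regime $\epsilon_n\to 0$ is not controlled; the uniformity in $N_n$ of the alleged non-resonance is not established; and even granting the conclusion $\epsilon_n\ge\eta>0$, the final step (turning three preimages on $[0,\eta]$ into a \emph{uniform} action gap via Proposition \ref{Ncontr}) is only gestured at and would itself require a quantitative lower bound on $\|u_n'\|_{L^2}$ over the corresponding level set.

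The observation you already made in case \textbf{(I)} — that the two outgoing derivatives of the full soliton cancel at $\vv_{k_n}$, so the Kirchhoff condition there is satisfied by $u_n|_{\G_{N_n}\setminus\RR_{k_n}}$ alone — closes the case in two lines without any linearization. It means the restriction of $u_n$ to $\G_{N_n}\setminus\RR_{k_n}$ is a \emph{nonzero} solution of \eqref{NLS} on that graph (nonzero since $u_n>0$ by the strong maximum principle), hence belongs to $\NN_\lambda(\G_{N_n}\setminus\RR_{k_n})$, and \eqref{pbelow} gives $\|u_n\|_{L^p(\G_{N_n}\setminus\RR_{k_n})}^p\ge C$ with $C=C(\lambda,p)$ independent of $N_n$ and of the graph. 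Then
\[
J_\lambda(u_n)=\kappa\Bigl(\|\phi_\lambda\|_{L^p(\R)}^p+\|u_n\|_{L^p(\G_{N_n}\setminus\RR_{k_n})}^p\Bigr)\ge s_\lambda+\kappa C,
\]
which is the missing uniform gap; combining with the bound $\tfrac32 s_\lambda$ from the other cases one takes $\delta_1=\min\{\tfrac12 s_\lambda,\kappa C\}$. This is precisely the paper's argument, and I recommend replacing your case \textbf{(I)} analysis with it.
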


\begin{proof}
We prove the statement explicitly for $\G_N$ only, as the argument works exactly in the same way for $\widetilde{\G}_N$. 
	
Let $u \in \Sf_1(\G_N)$. If $u$ is sign-changing  then, by Remark \ref{rem:pos}, we know that 
$J_{\lambda}(u)\ge 2 s_\lambda$.
Thus we consider the case where  $u$ does not change sign. We can suppose that $u\geq 0$ and by strong maximum principle, we have that $u>0$ on $\G$.

Call $\HH_1 \subset \RR_k$, for some $k\in \Z$, the half-line where $u$ attains its $L^\infty$ norm and $\HH_2$ the other half-line emanating from $\vv_k$. 
Suppose that $u$ attains its maximum at $\vv_k$. Then
\begin{equation}
\label{mag3}
\# u^{-1}(t)\geq 3\qquad\forall t\in(0,\|u\|_{\infty}),
\end{equation}
since every value $t\in(0,\|u\|_{\infty})$ is attained at least once in $\HH_1$, in $\HH_2$ and in $\G_N \setminus \RR_k$.

Then Proposition \ref{sK2} applied with $K=3$ yields
\begin{equation}
\label{liv1}
J_{\lambda}(u)\ge \frac32 s_\lambda.
\end{equation}
Suppose now that $u$ attains its maximum in the interior of $\HH_1$. Since $u$ solves \eqref{NLS},  its restrictions to  $\HH_1$ and $\HH_2$ coincide with suitable parts of the soliton $\phi_\lambda$. In particular, since $u$ attains its maximum in the interior of $\HH_1$, we see that 
$u(x) = \phi_\lambda(x-a)$, for some $a >0$, on $\HH_1$. Therefore, by continuity at $\vv_k$, on $\HH_2$ either $u(x) = \phi_\lambda(x-a)$ as well, or $u(x) = \phi_\lambda(x+a)$. 

In the first case we have a copy of $\phi_\lambda(x-a)$ on each of the two half-lines,  and the maximum of $u$ is attained on both half-lines. But then \eqref{mag3} again holds for $u$, since every value in $(u(\vv_k), \|u\|_\infty)$ is attained twice on each half-line,
and every value in $(0,u(\vv_k))$ is attained once on each half-line and at least once in $\G_N\setminus \RR_k$. Thus we conclude, exactly as above, that \eqref{liv1} holds.

In the second case the restriction of $u$ to $\RR_k$ is the whole soliton $\phi_\lambda$, which is smooth, and in particular the derivatives of $\phi_\lambda$ at $\vv_k$, being opposed, do not contribute to the Kirchhoff condition. Therefore $u$ solves problem \eqref{NLS} on $\G_N\setminus \RR_k$ and, as such, $u\in \NN_\lambda(\G_N\setminus \RR_k)$. Then, by \eqref{pbelow},
\[
\|u\|_{L^p\left(\G_N\setminus\RR_k\right)}^p   \ge C,
\]
with $C$ depending only on $\lambda$ and $p$.
In conclusion,
\[
J_{\lambda}(u) = \kappa \left( \|u\|_{L^p\left(\RR_k\right)}^p + 
\|u\|_{L^p\left(\G_N\setminus\RR_k\right)}^p    \right)
\\
\ge  \kappa \left(\|\phi_\lambda\|_{L^p\left(\mathbb R\right)}^p+  C\right) = s_\lambda + \kappa C, 
\]
and the lemma is proved choosing $\delta_1 = \min\left\{\frac12 s_\lambda, \kappa C\right\}$. 
\end{proof}

Now we prove that an estimate similar to that of the previous lemma holds also for solutions attaining their maximum on an edge of length $1$.

\begin{lemma}
\label{lem2}
There exists $\delta_2>0$ such that for every $N \ge 2$,
\begin{equation}
\label{2inf}
\inf_{u\in\Sf_2(\G_N)}J_{\lambda}(u)\geq s_\lambda+\delta_2\qquad\text{and}\qquad\inf_{u\in\Sf_2(\widetilde{\G}_N)}J_{\lambda}(u)\geq s_\lambda+\delta_2\,.
\end{equation}
\end{lemma}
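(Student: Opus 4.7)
I plan to argue by contradiction, following the scheme of Lemma~\ref{lem1} but refining it with a mass-balance identity against the soliton. I write the argument for $\G_N$; the $\widetilde\G_N$ case is identical except that the $\BB$-subcase does not occur. Suppose a sequence $(u_n) \subset \Sf_2(\G_N)$ satisfies $J_\lambda(u_n) \to s_\lambda$. By Remark~\ref{rem:pos} I may take $u_n \geq 0$, and by Lemma~\ref{lem1} I may further assume $u_n \notin \Sf_1(\G_N)$; hence the maximum $M_n := \|u_n\|_\infty$ is attained at an interior point $x_n^*$ of a bounded edge $e_n$ of length $1$, and on every half-line $u_n$ is a monotone decreasing soliton tail.

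By Proposition~\ref{boundedness} and assumption (H), the symmetric rearrangement satisfies $\pi_\lambda(\widehat u_n)\widehat u_n \in \NN_\lambda(\R)$ with $J_\lambda(\pi_\lambda(\widehat u_n)\widehat u_n) \le J_\lambda(u_n) \to s_\lambda$; the uniqueness of $\phi_\lambda$ as the even minimizer in $\NN_\lambda(\R)$ and standard compactness yield $\widehat u_n \to \phi_\lambda$ in $H^1(\R)$, so $\|u_n\|_p^p \to \|\phi_\lambda\|_p^p$ and $M_n \to M_* := \|\phi_\lambda\|_\infty$. Since $u_n|_{e_n}$ solves the ODE with $u_n(x_n^*) = M_n$, $u_n'(x_n^*) = 0$, continuous dependence on initial data gives $u_n|_{e_n}(\,\cdot\,) \to \phi_\lambda(\cdot - x_0^*)$ uniformly (up to a subsequence $x_n^* \to x_0^* \in [0,1]$). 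Setting $I_t := \int_t^{+\infty} \phi_\lambda^p\,dx$, this forces the endpoint values of $e_n$ to approach $\phi_\lambda(x_0^*)$ and $\phi_\lambda(1-x_0^*)$, and each half-line rooted at an endpoint of $e_n$ to carry an asymptotic $L^p$-mass equal to $I_{x_0^*}$ or $I_{1-x_0^*}$ (a tail of the soliton starting from the corresponding boundary value).

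If $e_n$ is a spine edge, four half-lines emanate from its endpoints; summing their masses with that of the cap on $e_n$, and using the identity $\|\phi_\lambda\|_p^p = I_{x_0^*} + \int_{-x_0^*}^{1-x_0^*}\phi_\lambda^p\,dx + I_{1-x_0^*}$ (from the evenness of $\phi_\lambda$), gives in the limit
\[
\int_{-x_0^*}^{1-x_0^*}\phi_\lambda^p\,dx + 2I_{x_0^*} + 2I_{1-x_0^*} = \|\phi_\lambda\|_p^p + I_{x_0^*} + I_{1-x_0^*} > \|\phi_\lambda\|_p^p,
\]
contradicting $\|u_n\|_p^p \to \|\phi_\lambda\|_p^p$. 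If instead $e_n \in \BB$ (only possible in $\G_N$), only the two half-lines at $\vv_0$ are available, and the analogous balance produces $\|\phi_\lambda\|_p^p + I_{x_0^*} - I_{1-x_0^*}$, which still exceeds $\|\phi_\lambda\|_p^p$ as soon as $x_0^* < 1/2$.

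The main obstacle is the remaining regime $e_n \in \BB$ with $x_0^* \geq 1/2$. To handle it I would invoke Lemma~\ref{lemmaremark}, which replaces the $\BB$-part of $u_n$ by its symmetric rearrangement on the loop $\widetilde\LL_0$ of length $N$ in $\widetilde\G_N$, producing $\tilde v_n \in \NN_\lambda(\widetilde\G_N) \cap X_{\widetilde\LL_0}$ with $J_\lambda(\tilde v_n) \leq J_\lambda(u_n)$ and with a monotone symmetric cap on $\widetilde\LL_0$ bounded below by $u_n(\vv_0)$ everywhere; hence $\|u_n\|_{L^p(\BB)}^p = \|\tilde v_n\|_{L^p(\widetilde\LL_0)}^p \geq N\,u_n(\vv_0)^p$. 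Combined with the two half-lines at $\vv_0$ this provides the missing excess mass and closes the contradiction. The delicate point, which I expect to require the sharpest analysis, is to make this last estimate uniform in $N \geq 2$ in the sub-regime where $x_0^*$ is close to $1$: there $u_n(\vv_0) \to \phi_\lambda(x_0^*)$ is small and the lower bound $N\,u_n(\vv_0)^p$ degenerates, so a refined accounting that also tracks the $L^p$-mass on the spine edges and loops near $\vv_0$ (or an application of the strict inequality part of Lemma~\ref{lemmaremark}, valid whenever $\#\{x\in\BB : u_n(x)=t\}\ge 3$ on a set of positive measure) is needed to recover a uniform gap.
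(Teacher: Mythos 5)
Your treatment of the spine edges is correct and genuinely different from the paper's. The paper never touches the ODE: it rearranges the two components of $\Gt\setminus e$ onto pairs of half-lines so as to map \emph{any} Nehari function attaining its sup on a length-one edge (not only solutions) to a competitor on a single fixed H-shaped graph $\overline{\G}$, and then proves $c_\lambda(\overline{\G},\overline e)>s_\lambda$ via the dichotomy of Lemma \ref{prova} combined with the Brezis--Lieb lemma and Lemma \ref{Lem 3.5}; since $\overline{\G}$ does not depend on $N$, uniformity of $\delta_2$ is automatic. Your soliton-convergence and mass-balance computation reaches the same conclusion for spine edges with an explicit, $N$-independent excess $\kappa(I_{x_0^*}+I_{1-x_0^*})\ge 2\kappa I_1>0$, which is a nice quantitative bonus; the price is that it applies only to actual solutions, and it needs two small repairs you do not mention: the case where the maximum sits at a vertex rather than an interior point of $e_n$, and the fact that negating the statement produces a sequence living on $\G_{N_n}$ with \emph{varying} $N_n$, so every step must be checked to be $N$-free (which it is for spine edges, but which matters below).

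The genuine gap is the case $e_n\subset\BB$ with $x_0^*\ge 1/2$, which you flag but do not close. The inequality $\|u_n\|_{L^p(\BB)}^p\ge N\,u_n(\vv_0)^p$ is not justified: it would require $u_n\ge u_n(\vv_0)$ everywhere on $\BB$, whereas on the $N-1$ edges of $\BB$ other than $e_n$ a solution of the ODE joining the two boundary values may dip below both of them, and the equimeasurability in Lemma \ref{lemmaremark} provides no pointwise lower bound. Even granting such an estimate, you concede that the resulting accounting is not uniform in $N\ge2$. Note moreover that Lemma \ref{lemmaremark} sends a function maximizing on a $\BB$-edge to one maximizing on the long loop $\widetilde\LL_0$, whose doubly constrained level $c_\lambda(\Gt,\widetilde\LL_0)$ tends to $s_\lambda$ as $N\to\infty$, so that reduction alone can never produce a uniform $\delta_2$; some additional structural input is indispensable here -- for instance the strict-inequality clause of Lemma \ref{lemmaremark} under the condition $\#\{x\in\BB\mid u(x)=t\}\ge3$ on a set of positive measure, or a direct preimage count as in Proposition \ref{sK2} exploiting the loops inside $\BB$ -- and your proposal leaves exactly this step open.
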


\begin{proof}
We will prove the existence of $\delta_2>0$ such that for every $N \ge 2$,
\[
\inf_{u\in\mathcal{N}_2(\widetilde{\G}_N)}J_{\lambda}(u)\geq s_\lambda+\delta_2
\]
where 
\[
\mathcal{N}_2(\widetilde{\G}_N)=\left\{u\in\mathcal{N}(\widetilde{\G}_N)\,\mid\,\|u\|_{L^\infty(\widetilde{\G}_N)}=\|u\|_{L^\infty(e)},\,\text{for some edge }e \in\widetilde{\G}_N\text{ of length $1$}\right\}.
\]
 Then the result will follow via Lemma \ref{lemmaremark} and \eqref{csigma}.

Let $u \in \mathcal{N}_2(\Gt)$ and assume that $u$ attains its maximum on a given edge $e$ (of length $1$). 

We first construct a new graph and we rearrange $u$ on it. Let $\overline{e}$ be an edge of length $1$ and attach a pair of half-lines $\HH_1,\HH_2$ to one of its vertices and another pair  $\HH_3,\HH_4$ to the other one. We obtain in this way an H-shaped graph denoted by $\overline{\G}$. 
We now claim that there exists $v\in\NN_\lambda(\overline{\G})\cap X_{\overline{e}}$  such that 
\begin{equation}
\label{v<u}
J_{\lambda}(v)\leq J_{\lambda}(u).
\end{equation}
To see this, let $\vv_k$, $\vv_{k+1}$ be the vertices of $e$ and let $\widetilde\G_{\vv_k}, \widetilde\G_{\vv_{k+1}}$ be the connected components of $\Gt\setminus e$ containing $\vv_k$ and $\vv_{k+1}$ respectively. 
Note that both $\widetilde\G_{\vv_k}$ and $\widetilde\G_{\vv_{k+1}}$ satisfy assumption (H) and hence 
\begin{center}
$\#\{x\in\widetilde\G_{\vv_k} \mid u(x)=t\} \ge 2\qquad \text{for almost every } t \in (0,\|u\|_{L^{\infty}( \widetilde\G_{\vv_k})}),$
\\
$\#\{x\in\widetilde\G_{\vv_{k+1}} \mid u(x)=t\} \ge 2\qquad \text{for almost every } t \in (0,\|u\|_{L^{\infty}( \widetilde\G_{\vv_{k+1}})}).$
\end{center}
Let $u_1$ and $u_2$ be the symmetric rearrangements on $\R$ of the restrictions of $u$ to  
$\widetilde\G_{\vv_k}$, $\widetilde\G_{\vv_{k+1}}$ respectively, so that  $u_1, u_2 \in H^1(\R)$ and 
$$
\begin{array}{c}
\|u_1\|_{L^q(\R)}=\|u\|_{L^q(\widetilde\G_{\vv_k})}, \quad
\|u_2\|_{L^q(\R)}=\|u\|_{L^q(\widetilde\G_{\vv_{k+1}})} \mbox{ for every } q\in[1,\infty],
\\ 
\|u_1'\|_{L^2(\R)}\leq\|u'\|_{L^2(\widetilde\G_{\vv_k})}, \quad
\|u_2'\|_{L^2(\R)}\leq\|u'\|_{L^2(\widetilde\G_{\vv_{k+1}})}.
\end{array}
$$
 Furthermore, there exist $x_1,x_2\in\R$ such that 
$u_1(x_1)=u(\vv_k)$ and $u_2(x_2)=u(\vv_{k+1})$. Define then $\overline{u}\in H^1(\overline{\G})$ as
\[
\overline{u}(x)=\begin{cases}
u(x) & \text{ if }x\in \overline{e}\\
u_1(x+x_1) & \text{ if }x\in\HH_1\cup\HH_2\\
u_2(x+x_2) & \text{ if }x\in\HH_3\cup\HH_4,
\end{cases}
\]	
where with a slight abuse of notation we identified $\overline{e}\in \overline{\G}$ with $e\in\Gt$. By construction, $\overline{u}$ attains its $L^\infty$ norm on $\overline{e}$, $\|\overline{u}\|_{L^q(\overline{\G})}=\|u\|_{L^q(\widetilde\G_N)}$ for every $q$ and $\|\overline{u}'\|_{L^2(\overline{\G})}\leq\|u'\|_{L^2(\widetilde\G_N)}$, so that $\pi_\lambda(\overline{u})\leq1$. Hence, setting $v=\pi_\lambda(\overline{u})\overline{u}$, we obtain $v\in\NN_\lambda(\overline{\G})\cap X_{\overline{e}}$ fulfilling \eqref{v<u}. Thus
\[
J_{\lambda}(u) \ge \inf_{v\in\NN_\lambda(\overline{\G})\cap X_{\overline{e}}}J_{\lambda}(v) =c_\lambda(\overline{\G},\overline e)
\]
and it suffices to show that $c_\lambda(\overline{\G},\overline e)> s_\lambda$.

Now if $c_\lambda(\overline{\G},\overline e)$ is attained this is trivial by Theorem \ref{notatt}, since $\overline{\G}\in \bf{G}$ satisfies assumption (H).  If $c_\lambda(\overline{\G},\overline e)$ is not attained and $(w_n)_n \subset \NN_\lambda(\overline{\G})\cap X_{\overline{e}}$ is a minimizing sequence, then Lemma \ref{prova} applies yielding a weak limit $w \in X_{\overline{e}}\setminus\{0\}$, with $m := \|w\|_2^2 < \lim_n \|w_n\|_2^2 =: \mu$, $L(w)<\lambda$ and
\[
\lim_n L(w_n-w) = \lambda +\frac{m}{\mu-m}(\lambda- L(w)) \ge \lambda.
\]
In conclusion, as usual by the  Brezis--Lieb Lemma and Lemma \ref{Lem 3.5}
\[
c_\lambda(\overline{\G},\overline e) = \lim_n J_{\lambda}(w_n)   = \lim_n \kappa\|w_n\|_p^p= \kappa \lim_n \big(\|w_n-w\|_p^p + \|w\|_p^p\big) \ge  s_1\lambda^\alpha + \kappa\|w\|_p^p=
s_\lambda + \kappa\|w\|_p^p 
\]
and the proof is complete also when $c_\lambda(\overline{\G},\overline e)$ is not attained as $w\not=0$.
\end{proof}

In the next lemma, we study the properties of the graph $\Gt$.

\begin{lemma}
\label{lem3}
For every $\eps>0$ there exists $N_\eps:=N_\eps(\eps,\lambda,p)\in\N$ such that, for every $N\geq N_\eps$, 
\[
\inf_{u\in\Sf_{3}(\widetilde{\G}_N)}J_{\lambda}(u)<s_\lambda+\eps 
\]
and it is attained.  
\end{lemma}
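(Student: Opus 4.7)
The plan is to invoke the doubly--constrained existence machinery of Section \ref{sec:doppiovinc} on the graph $\widetilde{\G}_N$ with the loop $e=\widetilde\LL_0$, and then to use the $\Z$--translation symmetry of $\widetilde{\G}_N$ to identify the infimum over $\Sf_3(\widetilde{\G}_N)$ with the doubly--constrained level $c_\lambda(\widetilde{\G}_N,\widetilde\LL_0)$.

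First, every $\widetilde{\G}_N$ belongs to $\mathbf{G}$, satisfies assumption (H), and has $\inf_{e\in\widetilde{\mathbb E}}|e|=1$ uniformly in $N$. Taking $\ell_0=1$, Theorems \ref{exlarge} and \ref{usol} provide a threshold $\widetilde R=\widetilde R(\lambda,p)$, \emph{independent of $N$}, such that whenever $N\geq\widetilde R$ the level $c_\lambda(\widetilde{\G}_N,\widetilde\LL_0)$ is attained by a positive solution $u_N\in\Sf_\lambda(\widetilde{\G}_N)$ with
\[
\|u_N\|_{L^\infty(\widetilde\LL_0)}>\|u_N\|_{L^\infty(\widetilde{\G}_N\setminus\widetilde\LL_0)},
\]
so $u_N\in\Sf_3(\widetilde{\G}_N)$. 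On the other hand, by Remark \ref{level} applied to the edge $\widetilde\LL_0$ of length $N$, for every $\eps>0$ there exists $R_\eps>0$ such that $c_\lambda(\widetilde{\G}_N,\widetilde\LL_0)\leq s_\lambda+\eps/2$ as soon as $N\geq R_\eps$. Setting $N_\eps:=\max\{\widetilde R,R_\eps\}$, we get $u_N\in\Sf_3(\widetilde{\G}_N)$ with $J_\lambda(u_N)<s_\lambda+\eps$ for every $N\geq N_\eps$.

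It remains to see that the infimum over $\Sf_3(\widetilde{\G}_N)$ is actually realized by $u_N$. Let $v\in\Sf_3(\widetilde{\G}_N)$ have $L^\infty$ norm on some $\widetilde\LL_k$. Since $\widetilde{\G}_N$ is $\Z$--periodic, the horizontal shift that sends $\widetilde\LL_k$ onto $\widetilde\LL_0$ is an isometry of $\widetilde{\G}_N$; pulling $v$ back through it produces $\tilde v\in\NN_\lambda(\widetilde{\G}_N)\cap X_{\widetilde\LL_0}$ with $J_\lambda(\tilde v)=J_\lambda(v)$. Consequently $J_\lambda(v)\geq c_\lambda(\widetilde{\G}_N,\widetilde\LL_0)=J_\lambda(u_N)$, so $\inf_{\Sf_3(\widetilde{\G}_N)}J_\lambda=J_\lambda(u_N)<s_\lambda+\eps$ and the infimum is attained.

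The only point requiring some care is checking that the threshold $\widetilde R$ in Theorem \ref{usol} is genuinely uniform in $N$: this hinges entirely on the fact that $\inf_{e\in\widetilde{\mathbb E}}|e|=1$ is constant along the whole family $\{\widetilde{\G}_N\}_{N\in\N}$, so the parameter $\ell_0$ feeding Theorem \ref{usol} can be frozen once and for all. Beyond this, the argument is a direct combination of the results of Section \ref{sec:doppiovinc} with the elementary translation invariance of $\widetilde{\G}_N$.
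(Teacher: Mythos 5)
Your proposal is correct and follows essentially the same route as the paper's proof: Remark \ref{level} gives the level bound, Theorems \ref{exlarge}--\ref{usol} (with $\ell_0=1$ fixed along the family, hence thresholds uniform in $N$) give a minimizer of $c_\lambda(\widetilde{\G}_N,\widetilde\LL_0)$ lying in $\Sf_3(\widetilde{\G}_N)$, and $\Z$--periodicity identifies $\inf_{\Sf_3(\widetilde{\G}_N)}J_\lambda$ with that doubly--constrained level. Your explicit remark that the threshold of Theorem \ref{usol} is uniform in $N$ because $\inf_e|e|=1$ is constant is a useful clarification of a point the paper leaves implicit.
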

\begin{proof} 
By Remark \ref{level}, for every $\eps>0$ there exists $N_\eps \in\N$ such that for every $N\geq N_\eps$
\[
c_\lambda(\Gt, \widetilde \LL_0)  < s_\lambda + \eps.
\]
By taking $N_\eps$ even larger, if necessary,   Theorem \ref{exlarge} guarantees that $c_\lambda(\Gt, \widetilde \LL_0)$ is attained by some $u$ and, by Theorem \ref{usol}, $u \in \Sf_{3}(\Gt)$.
If $v$ is any other element of $\Sf_{3}(\Gt)$, by the periodicity of $\Gt$ there exists $\widetilde v \in \NN_\lambda(\Gt) \cap X_{\widetilde\LL_0}$ (a translation of $v$) such that $J_{\lambda}(\widetilde v ) = J_{\lambda}(v)$. Therefore
\[
J_{\lambda}(v) = J_{\lambda}(\widetilde v ) \ge c_\lambda(\Gt, \widetilde \LL_0)  = J_{\lambda}(u),
\]
which shows that $\displaystyle\inf_{u\in\Sf_{3}(\widetilde{\G}_N)}J_{\lambda}(u)=c_\lambda(\Gt, \widetilde \LL_0)$ is attained (by $u$).
\end{proof}

\begin{corollary}
\label{tildeatt}
Let $\eps \le \frac12 \min\{\delta_1,\delta_2\}$, where $\delta_1$, $\delta_2$ are given by Lemmas \ref{lem1}--\ref{lem2}, and $N_\eps$ be the corresponding number given by Lemma \ref{lem3}. For every $N\geq N_\eps$, 
\[
\sigma_\lambda(\widetilde{\G}_N) = \inf_{u\in\Sf_3(\widetilde{\G}_N)}J_{\lambda}(u)<s_{\lambda}+\eps.
\]
and $\sigma_\lambda(\widetilde{\G}_N)$ is attained.
\end{corollary}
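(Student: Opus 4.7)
The plan is to combine the three preceding lemmas via the decomposition \eqref{tildesplit}. Since
\[
\sigma_\lambda(\widetilde{\G}_N) = \inf_{u\in\Sf(\widetilde{\G}_N)} J_\lambda(u) = \min\Bigl\{\inf_{\Sf_1(\widetilde{\G}_N)} J_\lambda,\ \inf_{\Sf_2(\widetilde{\G}_N)} J_\lambda,\ \inf_{\Sf_3(\widetilde{\G}_N)} J_\lambda\Bigr\},
\]
it suffices to rule out the first two terms and then invoke Lemma \ref{lem3} for the third.

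First, by Lemma \ref{lem3}, the hypothesis $N\geq N_\eps$ ensures that $\inf_{\Sf_3(\widetilde{\G}_N)}J_\lambda < s_\lambda+\eps$ and that this infimum is attained. Next, the choice $\eps\leq \tfrac12\min\{\delta_1,\delta_2\}$ combined with Lemmas \ref{lem1} and \ref{lem2} gives
\[
\inf_{\Sf_1(\widetilde{\G}_N)}J_\lambda \geq s_\lambda+\delta_1 \geq s_\lambda+2\eps > s_\lambda+\eps > \inf_{\Sf_3(\widetilde{\G}_N)}J_\lambda,
\]
and analogously $\inf_{\Sf_2(\widetilde{\G}_N)}J_\lambda > \inf_{\Sf_3(\widetilde{\G}_N)}J_\lambda$.

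Therefore the overall minimum is realized on $\Sf_3(\widetilde{\G}_N)$, i.e. $\sigma_\lambda(\widetilde{\G}_N) = \inf_{\Sf_3(\widetilde{\G}_N)}J_\lambda < s_\lambda+\eps$, and it is attained by the minimizer produced in Lemma \ref{lem3}. There is no real obstacle here: the corollary is simply the bookkeeping step that packages the quantitative gap estimates of Lemmas \ref{lem1}--\ref{lem2} with the existence/upper-bound result of Lemma \ref{lem3}, which is why the three preceding lemmas were phrased in exactly that form.
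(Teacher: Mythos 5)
Your proof is correct and follows exactly the paper's argument: decompose $\Sf(\widetilde{\G}_N)$ via \eqref{tildesplit}, use Lemmas \ref{lem1}--\ref{lem2} together with $\eps\le\frac12\min\{\delta_1,\delta_2\}$ to push the infima over $\Sf_1$ and $\Sf_2$ above $s_\lambda+\eps$, and conclude with Lemma \ref{lem3} that the infimum over $\Sf_3$ lies below that level and is attained. Nothing to add.
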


\begin{proof}
By Lemmas  \ref{lem1}--\ref{lem2}, for every $N\ge 2$,
\[
\inf_{u\in\Sf_1(\widetilde{\G}_N)}J_{\lambda}(u)\geq s_\lambda+\eps, \qquad \inf_{u\in\Sf_2(\widetilde{\G}_N)}J_{\lambda}(u)\geq s_\lambda+\eps,
\]
while by Lemma \ref{lem3}
\[
\inf_{u\in\Sf_3(\widetilde{\G}_N)}J_{\lambda}(u)<s_\lambda+\eps.
\]
Therefore, in view of \eqref{tildesplit},
\[
\sigma_\lambda(\widetilde{\G}_N)  = \inf_{u\in\Sf_3(\widetilde{\G}_N)}J_{\lambda}(u)
\]
and it is attained, again by Lemma \ref{lem3}.
\end{proof}

To proceed, we prove a further preliminary result, similar in spirit to Lemma \ref{lemmaremark}, that establishes a {\em strict} inequality when passing from $\Sf_3(\G_N)$ to $ \Sf_3(\widetilde\G_N)$.

\begin{lemma}
\label{exclaim}
For every $N$ large enough and  for every $u \in \Sf_3(\G_N)$ such that $J_{\lambda}(u) < 2s_\lambda$, there exists $ v \in  \Sf_3(\widetilde\G_N)$ such that $J_{\lambda}( v) < J_{\lambda}(u)$.
\end{lemma}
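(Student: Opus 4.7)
Let $u\in\Sf_3(\G_N)$ satisfy $J_\lambda(u)<2s_\lambda$. By Remark~\ref{rem:pos}, $u$ does not change sign, so we may assume $u\ge 0$, and by the strong maximum principle (as in the end of the proof of Theorem~\ref{usol}) $u>0$ on $\G_N$. Let $\LL_k$ with $k\neq 0$ be the length--$N$ self-loop where $u$ attains its $L^\infty$ norm. The strategy is to rearrange $u|_\BB$ onto the loop $\widetilde\LL_0\subset\Gt$ via Lemma~\ref{lemmaremark} with a \emph{strict} gain, and then invoke Theorems~\ref{exlarge}--\ref{usol} to upgrade the resulting competitor to a genuine solution.

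The crux is therefore to verify the preimage hypothesis \eqref{piu3} for $u|_\BB$ when $N$ is large. Let $a:=u(\vv_0)$ and $b:=u(\vv_*)$ be the values at the two vertices of $\BB$, and argue by contradiction assuming $\#\{x\in\BB:u(x)=t\}\le 2$ for a.e.~$t$ in the range of $u|_\BB$. If $a\ne b$, then each of the $N$ parallel edges of $\BB$ joins a point of value $a$ to one of value $b$, so by the intermediate value theorem contributes at least one preimage for every $t\in(\min(a,b),\max(a,b))$, yielding $\#u^{-1}(t)\ge N$ on a positive-measure interval---an immediate contradiction once $N\ge 3$. Otherwise $a=b=:c$. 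On each edge, $u$ either is constant $\equiv c$ (in which case the ODE $u''=\lambda u-u^{p-1}$ and $u>0$ force $c=\lambda^{1/(p-2)}$) or admits at least one interior extremum. Classify the non-constant edges into those with a peak ($>c$), those with only a valley ($<c$), or those with both. For $t$ just above $c$, every edge containing a peak contributes two preimages, so two peak-carrying edges already produce $\#u^{-1}(t)\ge 4$, contradicting the standing assumption; the analogous statement for valleys yields that the non-constant edges are at most two in total (either one pure peak and one pure valley, or a single peak-and-valley edge). Hence at least $N-2$ edges of $\BB$ are constant $\equiv c=\lambda^{1/(p-2)}$, which gives
\[
J_\lambda(u)=\kappa\|u\|_{L^p(\G_N)}^p\ \ge\ \kappa(N-2)\lambda^{p/(p-2)},
\]
contradicting $J_\lambda(u)<2s_\lambda$ as soon as $N>2+2s_\lambda/(\kappa\lambda^{p/(p-2)})$.

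With \eqref{piu3} established for $N$ large, Lemma~\ref{lemmaremark} delivers $\widetilde v\in\NN_\lambda(\Gt)\cap X_{\widetilde\LL_k}$ with $J_\lambda(\widetilde v)<J_\lambda(u)$. Since $\widetilde\LL_k$ has length $N$, Theorems~\ref{exlarge} and \ref{usol} guarantee, for $N$ sufficiently large, that $c_\lambda(\Gt,\widetilde\LL_k)$ is attained by a positive solution $v$ of \eqref{NLS} whose $L^\infty$ norm is realized on $\widetilde\LL_k$ alone, so $v\in\Sf_3(\Gt)$ and
\[
J_\lambda(v)\ =\ c_\lambda(\Gt,\widetilde\LL_k)\ \le\ J_\lambda(\widetilde v)\ <\ J_\lambda(u),
\]
as required. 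The only nontrivial obstacle is the structural dichotomy on $\BB$: either $a\ne b$, in which case all $N$ edges sweep a common interval of values, or $a=b$ and an overlap argument for peak/valley ranges forces nearly all edges to be the nontrivial constant $\lambda^{1/(p-2)}$, which is ruled out by the $L^p$ bound encoded in $J_\lambda(u)<2s_\lambda$.
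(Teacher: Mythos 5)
Your proof is correct and follows essentially the same route as the paper: reduce to verifying the multiplicity condition \eqref{piu3} on $\BB$, observing that constant edges force the value $\lambda^{1/(p-2)}$ and hence a lower bound on the action, then conclude via Lemma~\ref{lemmaremark} (strict version) and Theorems~\ref{exlarge}--\ref{usol}. The only difference is cosmetic: the paper bounds the number of constant edges and extracts three edge-disjoint loops from the $\ge 6$ remaining non-constant edges, whereas you take the contrapositive and show directly via the peak/valley dichotomy that at most two edges of $\BB$ can be non-constant under the negation of \eqref{piu3}; both are valid.
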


\begin{proof} Let $u \in \Sf_3(\G_N)$ such that $J_{\lambda}(u) < 2s_\lambda$. By Remark \ref{rem:pos},   $u$ does not change sign. We suppose that $u\geq 0$ and by strong maximum principle, we have that $u>0$ on $\G$.

If $u$ attains its maximum in a loop $\LL_k $, by Lemma \ref{lemmaremark} there exists $\widetilde v \in \NN_\lambda(\Gt) \cap X_{\widetilde\LL_k}$   such that $J_{\lambda}(\widetilde v) \le J_{\lambda}(u)$, where $\widetilde\LL_k$ corresponds to $\LL_k$ after the identification of $\G_N\setminus \BB$ with $\Gt \setminus \widetilde\LL_0$.

If $u$ is  constant  on $m$ edges of $\BB$, then it necessarily equals $\lambda^{\frac1{p-2}}$, and 
\[
2s_\lambda\ge J_{\lambda}(u)  \ge m \kappa\lambda^{\frac{p}{p-2}} 
\]
shows that $m$ is bounded by a constant depending only on $\lambda$  and $p$.
Since $N$ can be assumed as large as we wish, there are at least $N-m \ge 6$ edges of $\BB$ on which $u$ is not constant. Any pair of these edges in $\BB$ forms a loop on which $u$ is not constant. Since there are at least $3$ such loops, necessarily
\[
\#\{x\in \BB\mid u(x)=t\} \ge 3
\]
for $t$ in a set of positive measure, so that by Lemma \ref{lemmaremark}, the element $\widetilde v \in   \NN_\lambda(\Gt) \cap X_{\widetilde \LL_k}$ found above satisfies the strict inequality  $J_{\lambda}(\widetilde v) < J_{\lambda}(u)$. 
Finally, invoking again Theorems \ref{exlarge} and \ref{usol}, provided $N$ is sufficiently large, there exists $ v \in\Sf_3(\widetilde{\G}_N)$ such that $J_{\lambda}(v) = \inf_{w \in   \NN_\lambda(\Gt) \cap X_{\widetilde\LL_k}}J_{\lambda}(w)$ and hence
\[
J_{\lambda}(v) \le J_{\lambda}(\widetilde v) < J_{\lambda}(u),
\]
and the proof is complete.
\end{proof}

\begin{lemma}
\label{lem4}
There exists $\overline{N}\in\N$ such that, for every $N\geq\overline{N}$, we have $\sigma_\lambda(\widetilde{\G}_N)>s_\lambda$,
\begin{equation}
\label{leveq}
\sigma_\lambda(\G_N)=\sigma_\lambda(\widetilde{\G}_N)
\end{equation}
and $\sigma_\lambda(\G_N)$ is not attained.
\end{lemma}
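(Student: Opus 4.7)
The plan is to establish the three claims in the order stated, working with a carefully chosen threshold $\overline N$ and exploiting the decomposition $\Sf(\G_N) = \Sf_1(\G_N) \cup \Sf_2(\G_N) \cup \Sf_3(\G_N)$ together with the $\Z$-periodicity of $\widetilde\G_N$. First I would fix $\eps > 0$ with $\eps < \frac12\min\{s_\lambda,\delta_1,\delta_2\}$, where $\delta_1,\delta_2$ come from Lemmas \ref{lem1}--\ref{lem2}, and choose $\overline N \ge N_\eps$ large enough that Corollary \ref{tildeatt}, Lemma \ref{exclaim}, and Theorems \ref{exlarge}--\ref{usol} all apply to $\G_N$ and $\widetilde\G_N$. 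Corollary \ref{tildeatt} then furnishes a minimizer $\tilde u$ of $\sigma_\lambda(\widetilde\G_N)$, attaining its $L^\infty$-norm on some loop $\widetilde\LL_{k_0}$ and with $\sigma_\lambda(\widetilde\G_N) \le s_\lambda + \eps$. The strict inequality $\sigma_\lambda(\widetilde\G_N) > s_\lambda$ is immediate: equality would force $\tilde u$ to realize $c_\lambda(\widetilde\G_N) = s_\lambda$, contradicting Remark \ref{rem:assH}.

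For the lower bound $\sigma_\lambda(\G_N) \ge \sigma_\lambda(\widetilde\G_N)$ I would take any $u \in \Sf(\G_N)$ and split into three cases. If $u \in \Sf_1(\G_N) \cup \Sf_2(\G_N)$, then Lemmas \ref{lem1}--\ref{lem2} give $J_\lambda(u) \ge s_\lambda + \min\{\delta_1,\delta_2\} > s_\lambda + \eps > \sigma_\lambda(\widetilde\G_N)$. If $u \in \Sf_3(\G_N)$ with $J_\lambda(u) \ge 2s_\lambda$, the bound is trivial since $\eps < s_\lambda$. If $u \in \Sf_3(\G_N)$ with $J_\lambda(u) < 2s_\lambda$, Lemma \ref{exclaim} produces $v \in \Sf_3(\widetilde\G_N)$ with $J_\lambda(v) < J_\lambda(u)$, forcing $J_\lambda(u) > J_\lambda(v) \ge \sigma_\lambda(\widetilde\G_N)$. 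Passing to the infimum gives the desired inequality.

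The heart of the argument is the reverse inequality, which I would obtain by transporting the minimizer $\tilde u$ from $\widetilde\G_N$ to $\G_N$. Assume WLOG $k_0 = 0$. Using the $\Z$-periodicity of $\widetilde\G_N$, let $\tilde u_k$ be the translate of $\tilde u$ concentrated on $\widetilde\LL_k$, so $J_\lambda(\tilde u_k) = \sigma_\lambda(\widetilde\G_N)$. Because $\tilde u \in H^1(\widetilde\G_N)$ solves \eqref{NLS} and has finite action, it decays along the horizontal backbone, whence $\eta_k := \tilde u_k(\widetilde\vv_0) \to 0$ and $\|\tilde u_k\|_{H^1(\widetilde\LL_0)} \to 0$ as $|k| \to \infty$. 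Setting $w_k := (\tilde u_k - \eta_k)^+$ on $\widetilde\G_N$ (so $w_k(\widetilde\vv_0) = 0$) and identifying $\G_N \setminus \BB$ with $\widetilde\G_N \setminus \widetilde\LL_0$, I would define
\[
v_k = \begin{cases} w_k & \text{on } \G_N \setminus \BB, \\ 0 & \text{on } \BB. \end{cases}
\]
Continuity at $\vv_0$ holds since both sides vanish there, and $v_k \in H^1(\G_N)$ still attains its $L^\infty$-norm on $\LL_k$. As the corrections are $o(1)$, one gets $\pi_\lambda(v_k) \to 1$, so $\pi_\lambda(v_k) v_k \in \mathcal{N}_\lambda(\G_N) \cap X_{\LL_k}$ and $c_\lambda(\G_N, \LL_k) \le J_\lambda(\pi_\lambda(v_k) v_k) \to J_\lambda(\tilde u) = \sigma_\lambda(\widetilde\G_N)$. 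By Theorems \ref{exlarge}--\ref{usol} each $c_\lambda(\G_N, \LL_k)$ is realized by a solution $u_k \in \Sf_3(\G_N)$, so $\sigma_\lambda(\G_N) \le J_\lambda(u_k) = c_\lambda(\G_N, \LL_k) \to \sigma_\lambda(\widetilde\G_N)$.

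Finally, non-attainment of $\sigma_\lambda(\G_N)$ follows by contradiction: a minimizer $u \in \Sf(\G_N)$ would satisfy $J_\lambda(u) = \sigma_\lambda(\widetilde\G_N) < s_\lambda + \min\{\delta_1,\delta_2\}$ and $J_\lambda(u) < 2s_\lambda$, so by Lemmas \ref{lem1}--\ref{lem2} necessarily $u \in \Sf_3(\G_N)$; then Lemma \ref{exclaim} would yield $v \in \Sf_3(\widetilde\G_N)$ with $J_\lambda(v) < \sigma_\lambda(\widetilde\G_N)$, contradicting the definition of $\sigma_\lambda(\widetilde\G_N)$. The main obstacle is the third step: one must ensure that the decay of $\tilde u$ along the backbone is strong enough that every error incurred in replacing $\widetilde\LL_0$ by $\BB$ (the cut-off constant $\eta_k$, the discarded mass on $\widetilde\LL_0$, and the Nehari rescaling) vanishes as $|k| \to \infty$, so that the competitors produced in $\G_N$ approach the level $\sigma_\lambda(\widetilde\G_N)$ from above.
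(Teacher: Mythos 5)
Your proof is correct and follows the same overall architecture as the paper's: the strict inequality $\sigma_\lambda(\widetilde\G_N)>s_\lambda$ from the non-attainment of $c_\lambda(\widetilde\G_N)$, the inequality $\sigma_\lambda(\G_N)\ge\sigma_\lambda(\widetilde\G_N)$ from the splitting $\Sf_1\cup\Sf_2\cup\Sf_3$ together with Lemmas \ref{lem1}, \ref{lem2} and \ref{exclaim}, the reverse inequality by transporting the minimizer of $\sigma_\lambda(\widetilde\G_N)$ into $\G_N$ as a competitor for a doubly constrained problem and invoking Theorems \ref{exlarge}--\ref{usol}, and non-attainment again via Lemma \ref{exclaim}. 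The only genuine difference is the device used to move the minimizer $w$ of $\sigma_\lambda(\widetilde\G_N)$ onto $\G_N$: you translate it to $\widetilde\LL_k$ with $|k|\to\infty$ and subtract its trace $\eta_k$ at $\widetilde\vv_0$ before setting it to zero on $\BB$, which obliges you to justify that $\eta_k\to0$, that $\|\tilde u_k\|_{H^1(\widetilde\LL_0)}\to0$, and that the cut-off $(\tilde u_k-\eta_k)^+$ is an $o(1)$ perturbation in $H^1$ (all true, since a continuous $H^1$ solution vanishes at infinity on the graph, but these are exactly the estimates you leave as ``the main obstacle''); the paper instead truncates at a level $\delta$, setting $w_\delta=(w-\delta)^+$, so that the support becomes a bounded subgraph that can be re-embedded into $\G_N$ avoiding $\vv_0$ altogether, and $w_\delta\to w$ strongly in $H^1$ as $\delta\to0$ with no decay estimate needed. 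Both routes give competitors with $\pi_\lambda\to1$ and hence $c_\lambda(\G_N,\LL_{k})\to\sigma_\lambda(\widetilde\G_N)$, after which the two arguments coincide.
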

\begin{proof}
Let $\eps$ and $N_\eps$ be as in Corollary \ref{tildeatt}, so that there exists $w\in\Sf_3(\widetilde{\G}_N)$ satisfying
\[
J_{\lambda}(w) =\sigma_\lambda(\widetilde{\G}_N)< s_\lambda + \eps.
\]
Note that  $\sigma_\lambda(\widetilde{\G}_N) = J_{\lambda}(w) > s_\lambda$ by Remark \ref{rem:assH}. We assume, in accordance with Theorem \ref{usol}, that $w >0$, and that $w$ attains its maximum in the loop $\widetilde\LL_0$, which is possible, as usual, by the periodicity of $\widetilde{\G}_N$. For every $\delta >0$, define $w_\delta \in H^1(\widetilde{\G}_N)$ as
\[
w_\delta(x) = (w(x)- \delta)^+
\]
and notice that $w_\delta \to w$ strongly in $H^1(\widetilde{\G}_N)$ as $\delta \to 0$. The support of $w_\delta$ is, by construction, a bounded subgraph $\G_\delta$ of $\widetilde{\G}_N$, that can also be considered as a subgraph of $\G_N$: it suffices to embed it, for every $\delta$, into $\G_N$ in such a way that it does not contain $\vv_0$. Thus, after extending $w_\delta$ to $0$ in $\G_N \setminus \G_\delta$, we can view it as a function in $H^1(\G_N)$ attaining its maximum on some loop $\LL_{k_\delta}$. Note also that, by strong convergence,
\[
\pi_\lambda(w_\delta)^{p-2} = \frac{\|w_\delta'\|_{L^2(\G_N)}^2 + \lambda \|w_\delta\|_{L^2(\G_N)}^2} {\|w_\delta\|_{L^p(\G_N)}^p }
= \frac{\|w_\delta'\|_{L^2(\widetilde{\G}_N)}^2 + \lambda \|w_\delta\|_{L^2(\widetilde{\G}_N)}^2} {\|w_\delta\|_{L^p(\widetilde{\G}_N)}^p } \to 1
\]
as $\delta \to 0$. 

By Theorems \ref{exlarge}--\ref{usol}, for every $\delta>0$ there exists $v_\delta\in\Sf(\G_N)$ such that 
\[
J_{\lambda}(v_\delta)=\inf_{u\in\NN_\lambda(\G_N)\cap X_{\LL_{k_{\delta}}}}J_{\lambda}(u).
\]
Therefore, as $\delta \to 0$,
\[
\sigma_\lambda(\G_N)\leq J_{\lambda}(v_\delta)\leq J_{\lambda}(\pi_\lambda(w_\delta)w_\delta) \to J_{\lambda}(w)  = \sigma_\lambda(\widetilde{\G}_N),
\]
showing that 
\begin{equation}
\label{dissigma}
\sigma_\lambda(\G_N)\leq\sigma_\lambda(\widetilde{\G}_N),
\end{equation} 
and hence,  by Lemmas \ref{lem1}--\ref{lem2}, 
\[
\sigma_\lambda(\G_N)=\inf_{u\in\Sf_3({\G}_N)}J_{\lambda}(u)<s_\lambda+\eps.
\]
Without loss of generality, let us assume that $\eps < s_\lambda$.
Next, for every $u \in \Sf_3(\G_N)$  with $J_{\lambda}(u) < s_\lambda+\eps$, let 
$v \in \Sf_3(\widetilde\G_N)$ be the function provided by Lemma \ref{exclaim}. Then
\[
\sigma_\lambda(\widetilde \G_N) \le J_{\lambda}(v) < J_{\lambda}(u)
\]
and taking the infimum over $u$ we obtain $\sigma_\lambda(\widetilde \G_N) \le \sigma_\lambda(\G_N)$ which, coupled with \eqref{dissigma}, establishes \eqref{leveq}. 

Finally, to prove that $\sigma_\lambda(\G_N)$ is not attained, assume instead that there exists $u \in \Sf_3(\G_N)$ such that
$J (u) = \sigma_\lambda(\G_N)$. By  Lemma \ref{exclaim} again, let $v \in  \Sf_3(\widetilde\G_N)$ satisfy $ J_{\lambda}(v) < J_{\lambda}(u)$. Then
\[
\sigma_\lambda(\widetilde \G_N) \le J_{\lambda}(v) < J_{\lambda}(u) = \sigma_\lambda(\G_N),
\]
contradicting \eqref{leveq}.
	\end{proof}
\begin{proof}[Proof of Theorem \ref{prop:largeN}]
	It is enough to take $\overline{N}$ as in Lemma \ref{lem4}, the result is then a straightforward consequence of Remark \ref{rem:assH} and Lemma \ref{lem4}.
\end{proof}

\section*{Acknowledgements}
S.D. acknowledges that this work has been partially supported by the INdAM GNAMPA project 2022 {\em Modelli matematici con singolarit\'a per fenomeni di interazione}.
\medbreak

\noindent
D.G. is an F.R.S.-FNRS Research Fellow.

\end{document}